\newcommand{\brint}[1]{{\llbracket #1 \rrbracket}} 
\newcommand{\ebrint}[1]{{\rrbracket #1 \llbracket}} 
\renewcommand{\labelitemi}{$\bullet$}
\theoremstyle{definition}
\newtheorem{definition}{Definition}
\theoremstyle{remark}
\newtheorem{remark}{Remark}
\theoremstyle{plain}
\newtheorem{theorem}{Theorem}
\newtheorem{lemma}{Lemma}
\newtheorem{proposition}{Proposition}
\renewcommand\theequation{\thesection.\arabic{equation}}
\newcommand\R{{\mathbb R}} 
\newcommand\Z{{\mathbb Z}} 
\newcommand\N{{\mathbb N}} 
\newcommand\Ns{{\mathbb N}^*} 
\renewcommand\P{{\bf P}} 
\newcommand\Ph{{\widehat {\bf P}}} 
\newcommand\Pb{{\mathbb{P}}} 
\newcommand\E{{\bf E}} 
\newcommand\Eb{{\mathbb{E}}} 
\newcommand\Eh{{\widehat{\bf E}}} 
\newcommand\Pbh{{\widehat{\mathbb{P}}}} 
\newcommand\Ebh{{\widehat{\mathbb{E}}}} 
\newcommand\iid{{i.i.d.\ }} 
\newcommand\as{{a.s.\ }} 
\newcommand\ie{{i.e.\ }} 
\renewcommand\root{{\rho}} 
\newcommand\Tb{{\mathbf T}} 
\newcommand\Fb{{\mathbf F}} 
\newcommand\Tr{{\mathcal{T}}} 
\newcommand\V{{\mathbb{V}}} 
\newcommand\W{{\mathbb{W}}} 
\newcommand\T{{\mathbb T}} 
\newcommand\F{{\mathbb F}} 
\renewcommand\L{{\cal{L}}} 
\newcommand\B{{\cal{B}}}
\newcommand\tauh{{\widehat{\tau}}} 
\newcommand\ph{{\widehat{p}}} 
\newcommand\ty{{\beta}} 
\newcommand\tyt{{\widetilde{\beta}}} 
\newcommand\tyl{{e}} 
\newcommand{\ee}[1]{{\mathrm{e}^{#1}}} 
\newcommand{\cl}[1]{{\lceil #1 \rceil}} 
\newcommand{\fl}[1]{{\lfloor #1 \rfloor}} 
\newcommand{\1}[1]{{{\bf 1}_{\{ {#1} \}}}} 
\newcommand{\sto}[2][\longrightarrow]{{\substack{ \\ {#1} \\ {#2}}}} 
\newcommand{\ssim}[1]{{\underset{#1}{\sim}}} 
\newcommand{\parent}[1]{{\buildrel \leftarrow \over {#1}}} 
\newcommand{\up}[1]{\textsuperscript{#1}} 
\newcommand\eps{\varepsilon} 
\renewcommand\d{\, \mathrm{d}} 
\renewcommand{\cal}[1]{{\mathcal{#1}}} 
\newcommand{\scr}[1]{{\mathscr{#1}}} 
\newcommand{\st}{{\; :\;}} 
\newcommand{\commentblock}[1]{} 
\newcommand\bzeta{{\boldsymbol \zeta}} 
\newcommand\hzeta{{\widehat{\zeta}}} 
\title{\textbf{Scaling limit of the subdiffusive random walk on a Galton--Watson tree in random environment}}
\author{Loïc de Raphélis \thanks{UMPA UMR 5669 CNRS, ENS de Lyon, 46 allée d'Italie 69007 Lyon, FRANCE. Partially supported by the ANR project Liouville (ANR-15-CE40-0013)}}
\date{April 18, 2019}
\begin{document}

\maketitle

\begin{abstract}
  We consider a random walk on a Galton--Watson tree in random environment, in the subdiffusive case. We prove the convergence of the renormalised height function of the walk towards the continuous-time height process of a spectrally positive strictly stable Lévy process, jointly with the convergence of the renormalised trace of the walk towards the real tree coded by the latter continuous-time height process. 
\end{abstract}

\bigskip

\noindent{\bf Mathematics Subject Classification (2010): }60J80, 60G50, 60K37, 60F17. 

\bigskip


\section{Introduction}\label{sec:intro}

Let us consider $N$ a point process taking values in $\bigsqcup_{n\in\N\cup\{\N\}}{\R}^n$ (with the convention that~${\R}^0$ is the empty sequence, and $\R^{\N}$ is the set of real sequences, allowing $\# N=\infty$). Let ${\V:=\big(\T,(V(u))_{u\in\T}\big)}$ be a branching random walk with reproduction law $N$, that is a random marked tree built by induction as follows: 
\begin{itemize}
\item {\bf Initialisation} \\
Generation $0$ of $\T$ is only made up of the root, denoted by $\root$. We set $V(\root)=0$. 
\item {\bf Induction} \\
Let $n≥0$, and suppose that the tree has been built up to generation $n$. If generation $n$ is empty, then generation $n+1$ is empty. Otherwise, each vertex $u$ of generation $n$ gives progeny to a set of marked children $(c(u),(\Delta V(v))_{v\in c(u)})$ according to $N$, independently of other vertices, thus forming generation $n+1$. We set $V(v):=V(u)+\Delta V(v)$ for every~$v$ in $c(u)$.
\end{itemize}
For a vertex $u$ in the tree, we will denote by $|u|$ its generation, by $\parent{u}$ its parent, by $c(u)$ its set of children in~$\T$ (which may be an infinite set), and by $u_0,u_1,\dots,u_{|u|-1}$ its ancestors at generation $0,\dots,|u|-1$ (so $u_0=\root$ and $u_{|u|-1}=\parent{u}$). We also let $\T_u:=\{v\in\T\st u≤v\}$ be the "subtree" of $\T$ rooted in $u$. For every $u,v\in\T$, we will write $u≤v$ (resp.\ $u<v$) when $u$ is an ancestor of $v$ (resp.\ $u$ is a strict ancestor of $v$). 

We let $\Pb$ be the measure of this branching random walk, $\Eb$ be the associated expectation, and $\Pb^*$ be the measure of the branching random walk conditioned to survive.\\

We are interested in the nearest-neighbour random walk $(X^\V_n)_{n≥0}$ on $\T$ starting in $\root$, and with transition probabilities depending on $(V(u))_{u\in\T}$, defined as follows for every $u\in\T$ and~$n≥0$: 
\begin{align}\label{eq:probatrans}
\P^\V\big(X^\V_{n+1}=\parent{u}\,\mid\, X^\V_n=u\big) =& \frac{\ee{-V(u)}}{\ee{-V(u)} + \sum_{w\in c(u)} \ee{-V(w)} }\nonumber\\
\P^\V\big(X^\V_{n+1}=v\,\mid\, X^\V_n=u\big) =& \frac{\ee{-V(v)}}{\ee{-V(u)} + \sum_{w\in c(u)} \ee{-V(w)} }\quad \text{for every } v\in c(u).  
\end{align} 
We denote by $(p_{u,v})_{u,v\in\T}$ these transition probabilities (with $p_{u,v}=0$ if $u$ and $v$ are not neighbours in $\T$). Notice that they only depend on $(\Delta V(v))_{v\in c(u)}$ (this can be seen by dividing both the top and bottom of the fractions above by $\ee{-V(u)}$). In order to properly define the transition probabilities from the root, we artificially add a parent $\parent{\root}$ to the root and we suppose that the random walk is reflected in $\parent{\root}$. 

We denote by $\P^\V$ the law of $(X^\V_n)_{n≥0}$ conditionally on $\V$: this law is called the \textit{quenched} law of the random walk. We associate the expectation $\E^\V$ to this law. We denote by $\P$ the \textit{annealed} law of the random walk (and by $\E$ the associated expectation), that is the quenched law averaged over $\Pb$; we will also denote by $\P^*$ the annealed probability averaged over $\Pb^*$. When there is no ambiguity on $\V$, we will simply denote the walk by $(X_n)_{n≥0}$. \\

Let us introduce the Laplace transform of $\V$, defined for all $t≥0$ by: 
\begin{equation*}
\psi(t):=\Eb\Big[ \sum_{|u|=1} \ee{-tV(u)} \Big], 
\end{equation*}
Notice that $\psi(0)$ is the mean of the offspring distribution of $\T$; in order the tree $\T$ to have a positive probability to be infinite, we will suppose from now on that $m:=\psi(0)>1$ (allowing~$m=\infty$, as we allowed $\Pb(\sum_{|u|=1}1=\infty)>0$). It was shown in this case by R.~Lyons and R.~Pemantle~\cite{lyons-pemantle} that on the event of non-extinction, the walk is transient or positive recurrent, depending on whether $\min_{t\in[0;1]}\psi(t)$ is respectively $>1$ or $<1$. When $\min_{t\in[0;1]}\psi(t)=1$, the random walk is recurrent. If $\psi$ is well-defined in a small neighbourhood of $1$ and is differentiable in $1$, then it was shown by G.~Faraud~\cite{faraud} that the walk is null recurrent when $\psi'(1)<0$. We will consider this case in this article; to sum up, we make the following hypotheses: 
\begin{equation*}
{\bf (H_c)}\begin{cases}\parbox{\textwidth}{
\begin{itemize}
\item $m:=\psi(0)=\Eb\big[\sum_{|u|=1}1\big]\in (1,\infty]$, 
\item $\min_{t\in[0;1]}\psi(t)=1$, 
\item $\psi'(1)=-\Eb\Big[ \sum_{|u|=1} V(u)\ee{-V(u)} \Big]<0$. 
\end{itemize}}\end{cases}
\end{equation*}
When the marks $V$ (also called {\it environment}) of the point process $N$ are all equal to a constant~$\ln(\lambda)$, then the walk is a {\it $\lambda$-biased random walk}. The second condition, which actually reads~$\lambda=m$, implies the third. In a seminal work, Y.~Peres and O.~Zeitouni~\cite{peres-zeitouni} proved for the $\lambda$-biased random walk a central limit theorem on the height function of the walk, $(|X_n|)_{n≥0}$, under some conditions on the moments of the offspring distribution of $\T$. Namely, they showed that when renormalised by a factor $n^{1/2}$, the height function converges towards a reflected Brownian motion. This theorem has been extended later by A.~Dembo and N.~Sun in~\cite{dembo-sun} to the case where the underlying tree is a multitype Galton--Watson tree, and under weaker hypotheses. \\

When the environment is random, in order to further understand the behaviour of the random walk we need to introduce the following quantity: 
\begin{equation*}
\kappa:=\inf\{t>1 \st \psi(t)≥1 \}  
\end{equation*}
(with the convention $\inf \emptyset:=\infty$). Indeed, in~\cite{hu-shi}, Y.~Hu and Z.~Shi showed that the height of the maximum of the random walk at time $n$ is of order $n^{\gamma}$, where $\gamma:=\min(\frac{1}{2},1-\frac{1}{\kappa})$. This indicates a phase transition at $\kappa=2$. \\

When $\kappa>2$, the walk is therefore of order $n^{1/2}$. In~\cite{faraud}, G.~Faraud generalised Y.~Peres and O.~Zeitouni's result~\cite{peres-zeitouni} to the random-environment case, at least for $\kappa>5$ (resp.\ $\kappa>8$) in the annealed case (resp.\ in the quenched case).
Then, in~\cite{aidekon-raphelis}, we considered with E.~Aïdékon the trace of the walk at time $n$, that is the sub-tree of $\T$ made up of the vertices visited by the walk:
\begin{equation*}
\ \mathcal{R}_n:=(\Tb^n,d_{\Tb^n}), 
\end{equation*}
this notation standing for the graph $\Tb^n:=\{u\in\T\st \exists k≤n,\, X_k=u\}$ regarded as a metric space, equipped with the natural graph distance denoted by $d_{\Tb^n}$. We proved for $\kappa>2$ the convergence after renormalisation by a factor $n^{1/2}$ of $\mathcal{R}_n$ towards the real tree coded by the very reflected Brownian motion towards which converges the height function. Let us say a few words on the definition of the real tree coded by a function.  

Let $g:[a;b]\to\R_+$ a càdlàg function on $\R+$. Consider $d_g$ on $[a;b]^2$ defined by $d_g(t,s)=g(t)+g(s)-2\inf_{r\in[t,s]}g(r)$. Then set $t\sim s$ if $d_g(t,s)=0$; the real tree coded by $g$ is the metric space $\Tr_g:=[a;b]/\sim$ equipped with the distance $d_g$. If $g$ has compact support, then $\Tr_g$ is compact. The set of compact real trees can be endowed with the pointed Gromov-Hausdorff distance, which makes it a Polish space. See~\cite{le-gall-rt} for a detailed introduction to real trees. \\

We intend in this article to deal with the case $\kappa\in(1;2]$. More precisely, we will show that when renormalised by a factor $n^{1-1/\kappa}$, the height function of the walk $(|X_n|)_{n≥0}$ converges towards the continuous-time height process of a spectrally positive stable Lévy process of index $\kappa$ (a notion which will be defined in the next paragraph). We will also show the joint convergence of the trace $(\mathcal{R}_n)_{n≥0}$ towards the Lévy forest coded by this very continuous-time height process. 

We mention that several results were already obtained about the behaviour of the walk in the case $\kappa\in(1;2]$. Among others, in~\cite{andreoletti-debs}, P.~Andreoletti and P.~Debs showed that the largest entirely visited generation is of order $\ln(n)$, and that the local time of the root at time $n$ is of order $n^{1/\kappa}$. This was greatly refined in~\cite{hu}, where Y.~Hu obtained the limit law of the local time of the root after renormalisation. \\ 

The proof of the main theorem in~\cite{aidekon-raphelis} relied on a result of T.~Duquesne and J.-F.~Le~Gall~\cite{duquesne-le-gall}: consider a critical Galton--Watson forest the offspring distribution of which has finite variance, then the associated height process (that is the sequence of heights of the vertices of the aforesaid forest taken in the lexicographical order induced by Neveu's notation~\cite{neveu}) converges in law after renormalisation towards the reflected Brownian motion. This implies in some sense the convergence of the forest viewed as a metric space towards the real tree coded by the later reflected Brownian motion. 

Actually~\cite{duquesne-le-gall} also covers the case of offspring distributions with regularly varying tails: this is this version of the theorem that we will use in our article. However the limit is different (if~$\kappa<2$), so let us introduce it. 

Assume that the offspring distribution is regularly varying with index $-\theta$ where $\theta\in (1;2]$. Let $(Y_t)_{t≥0}$ be a strictly stable spectrally positive Lévy process (that is with no negative jumps) of index $\theta$ (if $\theta=2$, then $Y$ is a Brownian motion). Consider $I_s^t:=\inf_{r\in[s;t]}Y_r$, and set for all~$t≥0$
\begin{equation*}
H_t:=\lim_{\eps\to 0}\frac{1}{\eps} \int_0^t\1{Y_s<I_s^t+\eps}\d s. 
\end{equation*}
This random variable $H_t$ is the local time at level $0$ of $Y-I^t$ at time $t$. As explained in Subsection~4.3 of~\cite{le-gall-le-jan}, the limit exists almost surely, and it is possible to consider a measurable version of the whole process $(H_t)_{t≥0}$. This process is called the continuous-time height process of $Y$, and was first introduced in~\cite{le-gall-le-jan} (if $\theta=2$, then $H$ is a reflected Brownian motion). Notice that given the definition, as $Y$ satisfies the scaling property $(Y_{ct})_{t≥0}\;\substack{\text{law}\\=\\}\;(c^{\frac{1}{\kappa}}Y_{t})_{t≥0}$ for any $c≥0$, so does $H$: $(H_{ct})_{t≥0}\;\substack{\text{law}\\=\\}\;(c^{1-\frac{1}{\kappa}}H_{t})_{t≥0}$. Moreover, as $Y$ is spectrally positive, it admits a Laplace transform which characterizes its law. 

The main theorem of~\cite{duquesne-le-gall} says that the height process associated with a Galton--Watson forest with regularly varying offspring distribution converges towards the continuous-time height process $H$. \\

Let us add a few technical hypotheses, which will be necessary for us to apply Kesten's renewal theorem (Theorem B of~\cite{kesten}) in certain proofs: 
\begin{equation}\tag{${\bf H_\kappa}$}
\psi(\kappa)=1,\qquad \Eb\Big[\sum_{|u|=1} (-V(u))^{+}\ee{-\kappa V(u)}\Big]<\infty\quad {\rm and}\quad\Eb\Big[\big(\sum_{|u|=1} \ee{-V(u)}\big)^\kappa\Big]<\infty, 
\end{equation}
where for $x\in\R$, $x^{+}:=\max(x,0)$. The statement of our main theorem is the following. 

\begin{theorem}\label{th:main}
Suppose $\bf (H_c)$ and $\bf (H_\kappa)$ for a certain $\kappa\in(1;2]$. Suppose also that the distribution of the point process $N$ is non-lattice. Then there exists a constant $C^\star\in(0;\infty)$ such that the following convergence holds in law under $\P^\V$ for $\Pb^*$-almost every $\V$: 
\begin{align*}
&&&\frac{1}{n^{1-\frac{1}{\kappa}}}\Big((|X_{\fl{nt}}|)_{t≥0},\mathcal{R}_n\Big)\sto[\Longrightarrow]{n\to\infty} C^\star\Big((H_t)_{t≥0},\Tr_{(H_t)_{0≤t≤1}}\Big)\qquad&& \text{if $\kappa\in(1;2)$},\\
&or\\
&&&\frac{1}{{\left(n\ln^{-1}(n)\right)}^{\frac{1}{2}}}\Big((|X_{\fl{nt}}|)_{t≥0},\mathcal{R}_n\Big)\sto[\Longrightarrow]{n\to\infty}C^\star\Big((|B_t|)_{t≥0},\Tr_{(|B_t|)_{0≤t≤1}}\Big)\qquad&& \text{if $\kappa=2$}, 
\end{align*}
where $(H_t)_{t≥0}$ is the continuous-time height process of a strictly stable spectrally positive Lévy process of index $\kappa$, $(B_t)_{t≥0}$ is a standard Brownian motion, and where $\cal{T}_{(H_t)_{0≤t≤1}}$ (resp.~$\cal{T}_{(|B_t|)_{0≤t≤1}}$) is the real tree coded by $(H_t)_{0≤t≤1}$ ((resp.~by $(|B_t|)_{0≤t≤1}$)). The convergence holds in law for the Skorokhod topology on càdlàg functions and the pointed Gromov-Hausdorff topology on compact metric spaces. 
\end{theorem}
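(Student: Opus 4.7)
The strategy is to reduce the theorem to the main result of Duquesne--Le Gall~\cite{duquesne-le-gall} by exhibiting a Galton--Watson forest whose renormalised discrete height process coincides, asymptotically, with $(n^{-(1-1/\kappa)}|X_{\fl{nt}}|)_{t\ge 0}$. The natural candidate, already used in~\cite{aidekon-raphelis} for the diffusive case, is built from the successive excursions of $X$ away from the added parent $\parent{\root}$: each excursion explores a connected subtree of $\T$, and stacking these subtrees in order produces an ordered forest $\Fb$ whose depth-first exploration tracks $(|X_n|)_{n\ge 0}$ up to an appropriate random time change. The new difficulty in the subdiffusive regime, compared with~\cite{aidekon-raphelis}, is that the offspring distribution of $\Fb$ has infinite variance, and the time change becomes highly inhomogeneous because of the ``traps'' created by the potential $V$.

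\textbf{Step 1: offspring distribution.} The first technical task is to identify the law $\mu$ governing the number of children of a generic vertex of $\Fb$ -- morally, the number of children of a vertex $u$ in $\T$ that are discovered during a single excursion of $X$ through $u$ -- and to show that $\Pb(\mu\ge k)$ is regularly varying of index $-\kappa$. By Lyons--Pemantle--Peres-type conductance arguments, this number should be comparable to a multiplicative-cascade functional driven by $\sum_{|u|=1}\ee{-V(u)}$, so one can exploit the boundary case $\psi(\kappa)=1$ together with the moment condition $\Eb[(\sum_{|u|=1}\ee{-V(u)})^\kappa]<\infty$ from $({\bf H_\kappa})$. A Kesten-type renewal argument for multiplicative cascades should then yield $\Pb(\mu\ge k)\sim c k^{-\kappa}$ when $\kappa\in(1,2)$, and the analogous slowly-varying statement for $\kappa=2$ responsible for the $\ln^{-1/2}$ correction in the renormalisation.

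\textbf{Step 2: scaling limit of the forest and time change.} Once the regular variation of $\mu$ is established, Duquesne--Le Gall's theorem gives the convergence, after the correct renormalisation, of the height process of $\Fb$ explored in depth-first order towards $(H_t)_{t\ge 0}$. To translate this into the claimed convergence of $(|X_{\fl{nt}}|)_{t\ge 0}$, one has to show that the random time change sending exploration time of $\Fb$ to real time of $X$, renormalised by $n$, converges to a linear deterministic map in $\P^*$-probability. I expect this step to be the main obstacle: the walk spends a highly inhomogeneous amount of time at each vertex, governed by quenched local times that grow anomalously along sequences of vertices with atypically negative potential, and controlling these fluctuations demands sharp quenched tail estimates on local times that go well beyond what is needed in the diffusive regime.

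\textbf{Step 3: convergence of the range.} Finally, the joint convergence of $\cal{R}_n$ in the pointed Gromov--Hausdorff topology follows almost for free from the convergence of the height function: $\cal{R}_n$ is, up to negligible boundary effects, the tree coded by the discrete height process of $\Fb$ truncated at the relevant exploration time, while $\Tr_{(H_t)_{0\le t\le 1}}$ is the tree coded by its limit, and the coding map from càdlàg excursions to compact real trees is continuous. The joint convergence therefore transfers directly from the first coordinate, yielding the claimed statement under $\P^\V$ for $\Pb^*$-almost every $\V$ by standard quenched-vs-annealed arguments once concentration of the time change is proven $\Pb^*$-almost surely.
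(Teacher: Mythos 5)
Your overall strategy is the one the paper follows (relate the walk to a Galton--Watson-like structure built from the range and apply Duquesne--Le Gall), but your Step~1 identifies the wrong key random variable, and this is where the genuine difficulty of the paper lies. The range $\Fb$ is not a single-type Galton--Watson forest whose ``offspring distribution $\mu$'' one can simply study: it is a multitype Galton--Watson forest, the type of a vertex $u$ being its edge local time $\beta(u)\in\{1,2,\dots\}$, so the progeny law of $u$ depends on its type. The construction that makes a single-type analysis possible is the ``leafed Galton--Watson forest with edge lengths'' of~\cite{raphelis}: one jumps along the optional line $\cal{L}^1_u$ of descendants of $u$ that are the first of their ancestry line since $u$ to have $\beta=1$. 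The relevant quantity whose tail must be shown regularly varying is $L^1=\#\cal{L}^1$, the total number of such descendants (possibly at many generations), not the number of children of a vertex of $\T$ discovered in one excursion. A conductance/multiplicative-cascade heuristic for a one-step offspring count does not address this object.

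Proving $\P_1(L^1>x)\sim C_0 x^{-\kappa}$ is also much more involved than ``a Kesten-type renewal argument for multiplicative cascades''. The paper first reduces it, via the multitype size-biased change of measure $\Ph$ and Lemma~4.3 of~\cite{liu}, to $\Ph_1(L^1>x)\sim C_0'x^{-(\kappa-1)}$. It then realises $(\Tb,\beta)$ under $\Ph$ as the range of a family of killed nearest-neighbour walks launched from each vertex of a Lyons-type spine, decomposes $L^1$ along the spine in the style of Kesten--Kozlov--Spitzer, shows (Proposition~\ref{prop:Winfini}) that $L^1/n\to W_\infty$ in $L^{1+\alpha}$ under $\P_n$, and finally reduces the tail to that of $\ty^{\sigma_A}(w_{\sigma_A})W_\infty^{w_{\sigma_A}}$ using Kesten's renewal theorem and the tail asymptotics for $W_\infty$ from~\cite{liu}. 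Your Step~2, by contrast, overstates the difficulty of the time change: in the paper this is Proposition~\ref{prop:horizontal}, essentially a law of large numbers, and ``trap''-type heavy-tail effects do not enter. Your Step~3's appeal to ``standard quenched-vs-annealed arguments'' also hides real work: the paper first proves the annealed Theorem~\ref{th:forest} on an i.i.d.\ forest and then couples the walk on $\T$ with a walk on a forest $\widetilde\W$ via the level set $\cal{L}_n$ of vertices newly of local time $1$, controlled by Lemmas~\ref{lemma:Rnlog} and~\ref{lemma:maj2exp}; this coupling is specific to the problem and not a generic Borel--Cantelli argument.
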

\noindent
\begin{remark}
By requiring the fact that the point process $N$ is non-lattice we actually mean that there shall not exist any $a>0$ such that almost surely, for every $x\in N$, $x\in a\Z$.
\end{remark} 
\begin{remark}
We will see in the proofs of Proposition~\ref{prop:forest} and Proposition~\ref{prop:xkappa} that the constant $C^\star$ is actually equal to
\begin{equation*}
C^\star=\begin{cases}|\Gamma(1-\kappa)|^{-1/\kappa}\big(\frac{\kappa-1}{\kappa}\widehat{C}_\infty(\lim_{A\to\infty}K_A) \big)^{-1/\kappa}\mu (m_X)^{-1+\frac{1}{\kappa}} &\text{if}\ \kappa\in(1;2),\\
\big(\frac{1}{2}\widehat{C}_\infty(\lim_{A\to\infty}K_A) \big)^{-1/2}\mu (m_X)^{-1/2} &\text{if}\ \kappa=2, 
\end{cases}  
\end{equation*}
where $\widehat{C}_\infty$ is defined in Lemma~\ref{eq:defbetaA}, $K_A$ is defined in Lemma~\ref{lemma:KA}, and $\mu$ and $m_X$ are defined in Proposition~\ref{prop:regvar}. We were not able to compute explicitly the limit $\lim_{A→∞} K_A$. 
\end{remark}

Our paper is organised as follows. The next section (Section~\ref{sec:strategy}) gives the global strategy of the proof, which is similar to that used in~\cite{aidekon-raphelis}. It contains the proof of Proposition~\ref{prop:forest} (which is an annealed version of Theorem~\ref{th:main} for random walks on forests), provided that certain hypotheses are satisfied. 

Section~\ref{sec:studylocaltimes} introduces a change of measure on the trace, for which we give two equivalent constructions. The first construction consists in size-biasing the trace (which we will see is a multitype Galton--Watson tree); the second consists in size-biasing  the environment first, and then considering the trace of a series of random walks on the size-biased environment. 

Then, the whole Section~\ref{sec:technique} is dedicated to proving that the hypotheses required for the proof of Proposition~\ref{prop:forest} are satisfied indeed. It contains the most important part of this paper, Subsection~\ref{subsec:cvqueue1}, in which we establish the regular variation of the tail of a key random variable (denoted by $L^1$), namely the number of vertices which are the first of their ancestry line to have been visited only once by a given number of excursions of the walk. It is the main contribution of this paper, as in~\cite{aidekon-raphelis} we only had to establish the finiteness of the second moment of the aforesaid key random variable $L^1$. 

To do this, we first have to understand the behaviour of this random variable $L^1$ when the number of excursions is large, which is done in Proposition~\ref{prop:Winfini}. Next, in Subsection~\ref{subsec:cvqueue}, we carry out a sharp study of the random walk on a biased tree (inspired by the strategy developed by H.~Kesten, M.V.~Kozlov and F.~Spitzer in~\cite{kks}), on which we are led to use Kesten's renewal theorem~\cite{kesten}. 

Finally, we deduce Theorem~\ref{th:main} from Proposition~\ref{prop:forest} in Section~\ref{sec:final}. A sum-up of the notation used in the paper can be found in the appendix.

\section{Overview of the proof}\label{sec:strategy}

Inspired by the strategy of Y.~Peres and O.~Zeitouni in~\cite{peres-zeitouni} who, in order to study $\lambda$-biased random walks on a Galton--Watson tree, studied random walks on Galton--Watson trees with infinite ray, we will first prove an annealed version of our main theorem for random walks on forests. Let us denote by $\W=\big(\F,V(u)_{u\in\F}\big)$ a marked forest made up of a collection of i.i.d.\ branching random walks $(\V_i)_{i≥1}=(\T_i,V(u)_{u\in\T_i})_{i≥1}$ (as defined in Section~\ref{sec:intro}). The nearest-neighbour random walk $(X^\W_n)_{n≥0}$ on $\W$ starts on $\root_1$ the root of $\V_1$ and has transition probabilities defined as follows:
\begin{align*}
&\P^\W\big(X^\W_{n+1}=v\,\mid\, X^\W_n=u\big) = \frac{\ee{-V(v)}}{\ee{-V(u)} + \sum_{w\in c(u)} \ee{-V(w)} }\quad &&\mbox{for all }v\in c(u), \\
&\P^\W\big(X^\W_{n+1}=\parent{u}\,\mid\, X^\W_n=u\big) = \frac{\ee{-V(u)}}{\ee{-V(u)} + \sum_{w\in c(u)} \ee{-V(w)} }\quad &&\mbox{if $u$ is not a root,}\\
&\P^\W\big(X^\W_{n+1}=\root_{i+1}\,\mid\, X^\W_n=u\big) = \frac{\ee{-V(u)}}{\ee{-V(u)} + \sum_{w\in c(u)} \ee{-V(w)} }\quad &&\mbox{if for a certain $i$, $u=\root_i$,}
\end{align*} 
where for $i≥1$ we denote by $\root_i$ the root of $\T_i$. The behaviour of the random walk on $\W$ is then similar to that on $\V$ except when it is on a root. Being recurrent, it will then visit each tree composing $\W$ for a finite time, and sometimes when on a root then jump to the next one. Thus, if we let 
\begin{equation*}
\Fb:=\{u\in\W\st \exists n≥0,X^\W_n=u\}
\end{equation*}
be the sub-forest of $\W$ made up of the visited vertices, it is therefore almost surely well-defined with finite component trees. As there should not be any ambiguity on the context thereafter, we will simply denote the walk by $(X_n)_{n≥0}$, and we still denote by $\mathcal{R}_n$ the set of vertices of $\F$ visited before time $n$, seen as a metric space. 

\begin{proposition}\label{prop:forest}
Suppose $\bf (H_c)$ and $\bf (H_\kappa)$ for a certain $\kappa\in(1;2]$, and suppose that the distribution of the point process $N$ is non-lattice. Then the following convergence holds in law under $\P$: 
\begin{align*}
&&&\frac{1}{n^{1-\frac{1}{\kappa}}}\Big((|X_{\fl{nt}}|)_{t≥0},\mathcal{R}_n\Big)\sto[\Longrightarrow]{n\to\infty}C^\star\Big((H_t)_{t≥0},\Tr_{(H_t)_{0≤t≤1}}\Big)\qquad&& \text{if $\kappa\in(1;2)$},\\
&or\\
&&&\frac{1}{{\left(n\ln^{-1}(n)\right)}^{\frac{1}{2}}}\Big((|X_{\fl{nt}}|)_{t≥0},\mathcal{R}_n\Big)\sto[\Longrightarrow]{n\to\infty}C^\star\Big((|B_t|)_{t≥0},\Tr_{(|B_t|)_{0≤t≤1}}\Big)\qquad&& \text{if $\kappa=2$}, 
\end{align*}
where $(H_t)_{t≥0}$ is the continuous-time height process of a strictly stable spectrally positive Lévy process of index $\kappa$, and $(B_t)_{t≥0}$ is a standard Brownian motion. The convergence holds in law for the Skorokhod topology on càdlàg functions and the pointed Gromov-Hausdorff topology on metric spaces. 
\end{proposition}

In Section~\ref{sec:final}, we will deduce Theorem~\ref{th:main} from Proposition~\ref{prop:forest}. The strategy will be to consider excursions of the random walk above a level of height $\ln^2(n)$, the collection of which will behave as a random walk on a forest. As said before, this strategy was inspired by~\cite{peres-zeitouni}. Then the quenched version will result from the annealed one: as the tree is supercritical, conditionally on survival up to high height, the subtrees grafted at that height in $\T$ and visited by the walk tend to look like a collection of i.i.d. trees chosen according to $\Pb$ (the proof is similar to that at the end of Section~5 of~\cite{aidekon-raphelis}). \\

Let us detail the organisation of this section. First, we will see in Subsection~\ref{subsec:reduction} that the trace can be seen as a multitype Galton--Watson tree/forest, and we will introduce the notion of {\it leafed Galton--Watson forest with edge lengths}. We will then associate to~$((|X_n|)_{n≥0},\Fb)$ two leafed Galton--Watson forests with edge lengths, the first being such that its associated height process is equal to $(|X_n|)_{n≥0}$, and the second being such that its associated height process is equal to that of $\Fb$. In Subsection~\ref{subsec:cvleafed}, we will state a result on the associated height processes of such forests, under certain hypotheses. Provided that these hypotheses are satisfied by the leafed Galton--Watson forest with edge lengths associated with $(X_n)_{n≥0}$, we conclude the proof of Proposition~\ref{prop:forest} in Subsection~\ref{subsec:conclusionth2}. 

\subsection{Reduction of trees}\label{subsec:reduction}

Recall that $\Fb$ is the sub-tree of $\F$ made up of the visited vertices. For every $u\in\Fb$ not a root, let us denote by $\beta(u)$ the {\it edge local time} of $u$:
\begin{equation}\label{eq:defbeta}
\beta(u):=\#\{n≥0\st X_n=\parent{u}\ \text{and}\ X_{n+1}=u\}, 
\end{equation}
that is the number of visits made to $u$ by $X_n$ from $\parent{u}$. If $u$ is a root, we set $\beta(u)=1$. The forest $\Fb$ can be seen as the set of vertices with non-null edge local time. 

Notice that the vertex local times can be retrieved from edge local times: $\forall u\in\Fb$ not a root, $\#\{n≥0\st X_{n}=u\}=\beta(u)+\sum_{\parent{v}=u}\beta(v)$ (as the walk is recurrent, we also have $\beta(v)=\#\{n≥0\st X_n=v\ \text{and}\ X_{n+1}=\parent{v}\}$). However, edge local times are more convenient to study, as for example under $\P^\V$, the process $((\beta(u))_{|u|=n})_{n≥0}$ is Markovian, unlike the process of vertex local times. We will give a description of $\beta$ as a Markov process in Subsection~\ref{subsec:lawbeta}.  

The idea of looking at these edge local times to understand a random walk was introduced in the paper of H.~Kesten, M.V.~Kozlov and F.~Spitzer~\cite{kks}, and also used by A.-L.~Basdevant and A.~Singh in~\cite{basdevant-singh1},\cite{basdevant-singh2},\cite{basdevant-singh3}. In our case, they allow to understand the law of $\Fb$, which is given in Lemma~3.1 of~\cite{aidekon-raphelis} which we recall here.
\begin{lemma}{\bf \cite{aidekon-raphelis}}\label{lemma:gw} Under the annealed law $\P$, the marked forest $(\Fb,\beta)$ is a multitype Galton--Watson forest with roots of initial type $1$.
\end{lemma}
\noindent The reproduction law of process $(\beta(u))_{u\in\Fb}$ seen as a multitype Galton--Watson tree will be given in details in Section~\ref{sec:studylocaltimes}. \\

Let us now introduce a new kind of branching process, that we introduced in~\cite{raphelis}: leafed Galton--Watson forests with edge lengths. These forests are multitype Galton--Watson forests with edge lengths with two types, $0$ and $1$, such that only vertices of type $1$ may give progeny (so vertices of type $0$ are sterile). 

Formally, a leafed Galton--Watson forest with edge lengths consists in a triplet $(F,\tyl,\ell)$ where for every $u\in F$, $\tyl(u)\in\{0;1\}$ stands for the {\it type} of $u$ and $\ell(u)$ for the length of the edge joining $u$ with its parent. Let $\zeta$ be a probability measure on $\bigsqcup_{n\in\N\cup\{\N\}}(\{0;1\}\times \R_+)^n$ (with the convention that $(\{0;1\}\times \R_+)^0$ is the empty sequence). We build each component of the forest $(T,\tyl,\ell)$ by induction on generations as follows: 
\begin{itemize}
\item {\bf Initialisation} \\
Generation $0$ of $T$ is only made up of the root, denoted by $\root$, such that $\tyl(\root)=1$ and $\ell(\root)=0$. 
\item {\bf Induction} \\
Let $n≥0$, and suppose that the tree has been built up to generation $n$. If generation $n$ is empty, then generation $n+1$ is empty. Otherwise, each vertex $u$ of generation $n$ such that $\tyl(u)=1$ gives progeny according to $\zeta$, independently of other vertices, thus forming generation $n+1$. Vertices $u$ of generation $n$ such that $\tyl(u)=0$ give no progeny. 
\end{itemize}
The forest $(F,\tyl,\ell)$ is then built as a collection of \iid trees built as explained above. We define its associated {\it weighted height process} $H_F^\ell$, which is such that for every $n≥1$, if $u(n)$ is the $n$\up{th} vertex of $F$ in the lexicographical order, then 
\begin{equation*}
H_F^\ell(n)=\sum_{\root≤v≤u(n)}\ell(v).
\end{equation*}
We intend in this subsection to build from~$(\Fb,\beta)$ two leafed Galton--Watson forests with edge lengths: the first one will be such that its height process is equal to that of $\Fb$, and the second one will be such that its height process is equal to $(|X_n|)_{n≥0}$.

We emphasize that the height process of a forest is a different notion from that of height function of a random walk, even though the two terms are close. The height process of a forest is the sequence of heights of its vertices taken in the lexicographical order (somehow it is the height function of a deterministic walk on the tree, with jumps).  
 
Among others, we will apply to~$(\Fb,\beta)$ the transformation introduced in Section~3.2 of~\cite{raphelis} (which is inspired by another transformation introduced in~\cite{miermont}). To this end, let us define the notion of \textit{optional line of a given type}. 
\begin{definition}\label{def:optline}
Let $u\in\Fb$. For $v>u$, we denote by $\ebrint{u,v}:=\{w\in\Fb\st u<v<w\}$ the set of vertices between $u$ and $v$. 
\begin{itemize}
\item We let ${\B}_u^1$ be the set of vertices descending from $u$ in $\Fb$ having no ancestor of type~$1$ since $u$. Formally, 
\begin{equation*}
{\B}_u^1:=\big\{ v\in\Fb \st u<v\text{ and } \beta(w)\neq 1 \quad \forall w\in\ebrint{u,v} \big\}. 
\end{equation*}
\item We denote by ${\cal{L}}_u^1$ the set of vertices of type $1$ descending from $u$ in $\Fb$ and having no ancestor of type $1$ since $u$. Formally, 
\begin{equation*}
{\cal{L}}_u^1:=\big\{ v\in\Fb \st u<v,\: \beta(v)=1,\ \text{and}\ \beta(w)\neq 1 \quad \forall w\in\ebrint{u,v} \big\}. 
\end{equation*} 
\end{itemize}
We will denote by ${L}_u^1$ (resp.\ ${B}_u^1$) the cardinal of ${\cal L}_u^1$ (resp.\ ${\B}_u^1$). 
\end{definition}
Figure~\ref{f:transformation} gives a representation of these sets, among others. This construction is also valid for trees. As for every $u\in\Tb$ the law of the random variables ${\cal L}_u^1$ (resp.\ ${\B}_u^1$) only depends on the type of $u$ under the annealed law, we will generically denote $\cal{L}^1_\root$ (resp.\ $\cal{B}^1_\root$) by ${\cal L}^1$ (resp.\ ${\B}^1$), and by $L^1$ (resp.\ $B^1$) its cardinal. For $u\in\Tb$, we will write $u<\cal{L}^1$ if $u\in\cal{B}^1$ but $u\notin\cal{L}^1$. \\

\subsubsection{The forests $\Fb^R$ and $\Fb^{R^1}$}\label{subsec:FR}
Let us build a leafed Galton--Watson forest with edge lengths the associated weighted height process of which matches exactly that of $\Fb$. Let us explain how each component $\Tb_k^R$ of $\Fb^R$ is built from each component $\Tb_k$ of $\Fb$. Note that this construction implies a bijection between vertices of $\Fb$ and vertices of $\Fb^R$, as illustrated in Figure~\ref{f:transformation}. We proceed by induction as follows:
\begin{itemize}
\item {\bf Initialisation} \\
Generation $0$ of $\Tb_k^R$ is made up of the root $\root_k$ of $\Tb_k$ the $k$\up{th} component of $\Fb$, and we set $\ell(\root_k)=0$ and $\tyl(\root_k)=1$. 

\item {\bf Induction} \\
Let $n≥1$, and suppose that generation $n$ of $\Tb_k^R$ has been built. If generation $n$ of $\Tb_k^R$ is empty then generation $n+1$ of $\Tb_k^R$ is empty. 

Otherwise, by construction, to each $u\in\Tb_k^R$ of the $n$\up{th} generation of $\Tb_k^R$ such that $\tyl(u)=1$ was associated a vertex $u'$ in $\Tb_k$. Take in the lexicographical order the vertices $v'\in\Tb_k$ such that $v'\in{\B}_{u'}^1$, and add a vertex $v$ as a child of $u$ in $\Tb_k^R$, thus forming the progeny of $u$. We set $\tyl(v)=1$ if $\beta(u')=1$ (that is if $v'\in\cal{L}_u^1$) and $\tyl(v)=0$ otherwise. Then, for each of these vertices $v\in\Tb_k^R$, we set $\ell(v)=|v'|-|u'|$. 
\end{itemize} 
We let $\Fb^{R^1}$ be the sub-forest of $\Fb^R$ made up of the vertices of type $1$. By construction, we get the following lemma. 
\begin{lemma}\label{lemma:contourFr}
The forest $(\Fb^R,\tyl,\ell)$ is a leafed Galton--Watson forest with edge lengths. Denoting by $H_{R}$ the associated weighted height process, and $H_\Fb$ the height process of $\Fb$, then for all $n≥0$, 
\begin{equation*}
H^\ell_R(n)=H_\Fb(n). 
\end{equation*}
\end{lemma}

\begin{figure}[H]
\begin{tikzpicture}[line cap=round,line join=round,>=triangle 45,x=0.9cm,y=0.9cm]
\clip(4.2,-2.5) rectangle (22.8,7.5);
\draw[|->] (4.4,0)-- (4.4,6.7) node[above]{\hspace{1.1cm}\small $|H_{\Tb_k}(n)|$\phantom{))}};
\foreach \y in {1.0,2.0,3.0,4.0,5.0,6.0}
\draw[shift={(4.4,\y)},color=black] (2pt,0pt) -- (-2pt,0pt);
\draw[|->] (22.4,0)-- (22.4,6.7) node[above]{\small $H^\ell_R(n)$\phantom{))}};
\foreach \y in {1.0,2.0,3.0,4.0,5.0,6.0}
\draw[shift={(22.4,\y)},color=black] (2pt,0pt) -- (-2pt,0pt);
\draw (8,7.5) node[anchor=north west] {\large $\Tb_k$};
\draw (17,7.5) node[anchor=north west] {\large $\Tb^R_k$};
\draw [color=blue] (5,-1) ++(-2.0pt,0 pt) -- ++(2.0pt,2.0pt)--++(2.0pt,-2.0pt)--++(-2.0pt,-2.0pt)--++(-2.0pt,2.0pt);
\draw [fill=red] (5,-2) ++(-3.0pt,0 pt) -- ++(3.0pt,3.0pt)--++(3.0pt,-3.0pt)--++(-3.0pt,-3.0pt)--++(-3.0pt,3.0pt);
\draw (5,-1.5) node[anchor=north west] {Vertices $u$ s.t.\ $\beta(u)\neq 1$};
\draw (5,-0.5) node[anchor=north west] {Vertices $u$ s.t.\ $\beta(u)=1$};
\draw [dash pattern=on 3pt off 3pt,color=gray] (11,-0.5) -- (11,-1) -- (10.5,-1) -- (10.5,-0.5) -- (11,-0.5);
\draw (11,-0.5) node[anchor=north west] {The set $\B^{1}$};
\draw [dotted, color=gray] (11,-1.5)-- (10.5,-1.5)-- (10.5,-2)-- (11,-2)-- (11,-1.5);
\draw (11,-1.5) node[anchor=north west] {First generation of $\Tb^R_k$};
\draw [color=blue] (17,-1) circle (2.0pt);
\draw [fill=red] (17,-2) circle (2.5pt);
\draw (17,-1.5) node[anchor=north west] {Vertices of type $0$};
\draw (17,-0.5) node[anchor=north west] {Vertices of type $1$};
\draw [->] (12.7,3.5)-- (13.7,3.5);

\draw[dash pattern=on 2pt off 2pt,color=gray] (5.5,0.5) -- (5.5,2.5) -- (6.5,2.5) -- (6.5,2.0) -- (7.5,2.0) -- (7.5,3.5) -- (8.5,3.5) -- (8.5,1.5) -- (9.5,1.5) -- (9.5,1.0) -- (10.5,1.0) -- (10.5,1.5) -- (11.5,1.5) -- (11.5,0.5) -- (8.5,0.5) -- (7.5,1.5) -- cycle;
\draw[dotted,color=gray] (15.0,0.5) -- (15.0,2.5) -- (16.0,2.5) -- (16.0,2.0) -- (17.0,2.0) -- (17.0,3.5) -- (18.0,3.5) -- (18.0,1.5) -- (19.0,1.5) -- (19.0,1.0) -- (20.0,1.0) -- (20.0,1.5) -- (21.0,1.5) -- (21.0,0.5) -- (18.0,0.5) -- (17.0,1.5) -- cycle;
\draw (9.0,0.0)-- (6.0,1.0);
\draw (9.0,0.0)-- (9.0,1.0);
\draw (9.0,0.0)-- (11.0,1.0);
\draw (6.0,1.0)-- (6.0,2.0);
\draw (6.0,1.0)-- (8.0,2.0);
\draw (11.0,1.0)-- (10.0,2.0);
\draw (11.0,1.0)-- (11.0,2.0);
\draw (6.0,2.0)-- (5.0,3.0);
\draw (6.0,2.0)-- (5.5,3.0);
\draw (6.0,2.0)-- (7.0,3.0);
\draw (8.0,2.0)-- (8.0,3.0);
\draw (10.0,2.0)-- (10.0,3.0);
\draw (10.0,2.0)-- (10.5,3.0);
\draw (11.0,2.0)-- (12.0,3.0);
\draw (10.0,3.0)-- (9.0,4.0);
\draw (10.0,3.0)-- (10.0,4.0);
\draw (12.0,3.0)-- (11.0,4.0);
\draw (12.0,3.0)-- (12.0,4.0);
\draw (5.5,3.0)-- (5.5,4.0);
\draw (5.5,3.0)-- (6.5,4.0);
\draw (10.0,4.0)-- (9.5,5.0);
\draw (10.0,4.0)-- (10.5,5.0);
\draw (5.5,4.0)-- (5.0,5.0);
\draw (5.0,5.0)-- (5.0,6.0);
\draw (5.0,5.0)-- (6.0,6.0);
\draw (9.5,5.0)-- (9.5,6.0);
\draw (15.5,2.0)-- (14.5,3.0);
\draw (15.5,2.0)-- (15.0,3.0);
\draw (15.5,2.0)-- (16.5,3.0);
\draw (19.5,3.0)-- (18.5,4.0);
\draw (19.5,3.0)-- (19.5,4.0);
\draw (21.5,3.0)-- (20.5,4.0);
\draw (21.5,3.0)-- (21.5,4.0);
\draw (19.5,4.0)-- (19.0,5.0);
\draw (19.5,4.0)-- (20.0,5.0);
\draw (15.0,4.0)-- (14.5,5.0);
\draw (18.5,0.0)-- (15.5,1.0);
\draw (18.5,0.0)-- (18.5,1.0);
\draw (18.5,0.0)-- (20.5,1.0);
\draw (18.5,0.0)-- (15.5,2.0);
\draw (18.5,0.0)-- (17.5,2.0);
\draw (18.5,0.0)-- (17.5,3.0);
\draw (20.5,1.0)-- (19.5,2.0);
\draw (20.5,1.0)-- (19.5,3.0);
\draw (20.5,1.0)-- (20.0,3.0);
\draw (20.5,1.0)-- (20.5,2.0);
\draw (20.5,1.0)-- (21.5,3.0);
\draw (19.5,4.0)-- (19.0,6.0);
\draw (15.0,4.0)-- (14.5,6.0);
\draw (15.0,4.0)-- (15.5,6.0);
\draw (15.5,2.0)-- (15.0,4.0);
\draw (15.5,2.0)-- (16.0,4.0);
\begin{scriptsize}
\draw [fill=red] (9.0,0.0) ++(-3.0pt,0 pt) -- ++(3.0pt,3.0pt)--++(3.0pt,-3.0pt)--++(-3.0pt,-3.0pt)--++(-3.0pt,3.0pt);
\draw [color=blue] (6.0,1.0) ++(-2.0pt,0 pt) -- ++(2.0pt,2.0pt)--++(2.0pt,-2.0pt)--++(-2.0pt,-2.0pt)--++(-2.0pt,2.0pt);
\draw [color=blue] (9.0,1.0) ++(-2.0pt,0 pt) -- ++(2.0pt,2.0pt)--++(2.0pt,-2.0pt)--++(-2.0pt,-2.0pt)--++(-2.0pt,2.0pt);
\draw [fill=red] (11.0,1.0) ++(-3.0pt,0 pt) -- ++(3.0pt,3.0pt)--++(3.0pt,-3.0pt)--++(-3.0pt,-3.0pt)--++(-3.0pt,3.0pt);
\draw [fill=red] (6.0,2.0) ++(-3.0pt,0 pt) -- ++(3.0pt,3.0pt)--++(3.0pt,-3.0pt)--++(-3.0pt,-3.0pt)--++(-3.0pt,3.0pt);
\draw [color=blue] (8.0,2.0) ++(-2.0pt,0 pt) -- ++(2.0pt,2.0pt)--++(2.0pt,-2.0pt)--++(-2.0pt,-2.0pt)--++(-2.0pt,2.0pt);
\draw [color=blue] (10.0,2.0) ++(-2.0pt,0 pt) -- ++(2.0pt,2.0pt)--++(2.0pt,-2.0pt)--++(-2.0pt,-2.0pt)--++(-2.0pt,2.0pt);
\draw [color=blue] (11.0,2.0) ++(-2.0pt,0 pt) -- ++(2.0pt,2.0pt)--++(2.0pt,-2.0pt)--++(-2.0pt,-2.0pt)--++(-2.0pt,2.0pt);
\draw [color=blue] (5.0,3.0) ++(-2.0pt,0 pt) -- ++(2.0pt,2.0pt)--++(2.0pt,-2.0pt)--++(-2.0pt,-2.0pt)--++(-2.0pt,2.0pt);
\draw [color=blue] (5.5,3.0) ++(-2.0pt,0 pt) -- ++(2.0pt,2.0pt)--++(2.0pt,-2.0pt)--++(-2.0pt,-2.0pt)--++(-2.0pt,2.0pt);
\draw [color=blue] (7.0,3.0) ++(-2.0pt,0 pt) -- ++(2.0pt,2.0pt)--++(2.0pt,-2.0pt)--++(-2.0pt,-2.0pt)--++(-2.0pt,2.0pt);
\draw [fill=red] (8.0,3.0) ++(-3.0pt,0 pt) -- ++(3.0pt,3.0pt)--++(3.0pt,-3.0pt)--++(-3.0pt,-3.0pt)--++(-3.0pt,3.0pt);
\draw [fill=red] (10.0,3.0) ++(-3.0pt,0 pt) -- ++(3.0pt,3.0pt)--++(3.0pt,-3.0pt)--++(-3.0pt,-3.0pt)--++(-3.0pt,3.0pt);
\draw [color=blue] (10.5,3.0) ++(-2.0pt,0 pt) -- ++(2.0pt,2.0pt)--++(2.0pt,-2.0pt)--++(-2.0pt,-2.0pt)--++(-2.0pt,2.0pt);
\draw [fill=red] (12.0,3.0) ++(-3.0pt,0 pt) -- ++(3.0pt,3.0pt)--++(3.0pt,-3.0pt)--++(-3.0pt,-3.0pt)--++(-3.0pt,3.0pt);
\draw [color=blue] (9.0,4.0) ++(-2.0pt,0 pt) -- ++(2.0pt,2.0pt)--++(2.0pt,-2.0pt)--++(-2.0pt,-2.0pt)--++(-2.0pt,2.0pt);
\draw [fill=red] (10.0,4.0) ++(-3.0pt,0 pt) -- ++(3.0pt,3.0pt)--++(3.0pt,-3.0pt)--++(-3.0pt,-3.0pt)--++(-3.0pt,3.0pt);
\draw [color=blue] (11.0,4.0) ++(-2.0pt,0 pt) -- ++(2.0pt,2.0pt)--++(2.0pt,-2.0pt)--++(-2.0pt,-2.0pt)--++(-2.0pt,2.0pt);
\draw [color=blue] (12.0,4.0) ++(-2.0pt,0 pt) -- ++(2.0pt,2.0pt)--++(2.0pt,-2.0pt)--++(-2.0pt,-2.0pt)--++(-2.0pt,2.0pt);
\draw [fill=red] (5.5,4.0) ++(-3.0pt,0 pt) -- ++(3.0pt,3.0pt)--++(3.0pt,-3.0pt)--++(-3.0pt,-3.0pt)--++(-3.0pt,3.0pt);
\draw [color=blue] (6.5,4.0) ++(-2.0pt,0 pt) -- ++(2.0pt,2.0pt)--++(2.0pt,-2.0pt)--++(-2.0pt,-2.0pt)--++(-2.0pt,2.0pt);
\draw [color=blue] (9.5,5.0) ++(-2.0pt,0 pt) -- ++(2.0pt,2.0pt)--++(2.0pt,-2.0pt)--++(-2.0pt,-2.0pt)--++(-2.0pt,2.0pt);
\draw [color=blue] (10.5,5.0) ++(-2.0pt,0 pt) -- ++(2.0pt,2.0pt)--++(2.0pt,-2.0pt)--++(-2.0pt,-2.0pt)--++(-2.0pt,2.0pt);
\draw [color=blue] (5.0,5.0) ++(-2.0pt,0 pt) -- ++(2.0pt,2.0pt)--++(2.0pt,-2.0pt)--++(-2.0pt,-2.0pt)--++(-2.0pt,2.0pt);
\draw [color=blue] (5.0,6.0) ++(-2.0pt,0 pt) -- ++(2.0pt,2.0pt)--++(2.0pt,-2.0pt)--++(-2.0pt,-2.0pt)--++(-2.0pt,2.0pt);
\draw [color=blue] (6.0,6.0) ++(-2.0pt,0 pt) -- ++(2.0pt,2.0pt)--++(2.0pt,-2.0pt)--++(-2.0pt,-2.0pt)--++(-2.0pt,2.0pt);
\draw [fill=red] (9.5,6.0) ++(-3.0pt,0 pt) -- ++(3.0pt,3.0pt)--++(3.0pt,-3.0pt)--++(-3.0pt,-3.0pt)--++(-3.0pt,3.0pt);
\draw [color=blue] (15.5,1.0) circle (2.0pt);
\draw [color=blue] (18.5,1.0) circle (2.0pt);
\draw [fill=red] (20.5,1.0) circle (2.5pt);
\draw [fill=red] (15.5,2.0) circle (2.5pt);
\draw [color=blue] (17.5,2.0) circle (2.0pt);
\draw [color=blue] (20.5,2.0) circle (2.0pt);
\draw [color=blue] (14.5,3.0) circle (2.0pt);
\draw [color=blue] (15.0,3.0) circle (2.0pt);
\draw [color=blue] (16.5,3.0) circle (2.0pt);
\draw [fill=red] (17.5,3.0) circle (2.5pt);
\draw [fill=red] (19.5,3.0) circle (2.5pt);
\draw [color=blue] (20.0,3.0) circle (2.0pt);
\draw [fill=red] (21.5,3.0) circle (2.5pt);
\draw [color=blue] (18.5,4.0) circle (2.0pt);
\draw [fill=red] (19.5,4.0) circle (2.5pt);
\draw [color=blue] (20.5,4.0) circle (2.0pt);
\draw [color=blue] (21.5,4.0) circle (2.0pt);
\draw [fill=red] (15.0,4.0) circle (2.5pt);
\draw [color=blue] (16.0,4.0) circle (2.0pt);
\draw [color=blue] (19.0,5.0) circle (2.0pt);
\draw [color=blue] (20.0,5.0) circle (2.0pt);
\draw [color=blue] (14.5,5.0) circle (2.0pt);
\draw [color=blue] (14.5,6.0) circle (2.0pt);
\draw [color=blue] (15.5,6.0) circle (2.0pt);
\draw [fill=red] (19.0,6.0) circle (2.5pt);
\draw [fill=red] (18.5,0.0) circle (2.5pt);
\draw [color=blue] (19.5,2.0) circle (2.0pt);
\end{scriptsize}
\end{tikzpicture}
\caption{Construction of $\Tb^R_k$ from $\Tb_k$. }
\label{f:transformation}
\end{figure}

\subsubsection{The forests $\Fb^X$ and $\Fb^{X^1}$}\label{subsec:FX}

Consider the forest $\Fb^{R}$. The set of vertices of this forest can be put in bijection with the set of vertices visited by $(X_n)_{n≥0}$. However one cannot completely retrieve $(|X_n|)_{n≥0}$ from $\Fb^R$, as some vertices have been visited several times, and in a specific order. Let us build $\Fb^X$ a leafed Galton--forest with edge lengths, the associated weighted height process of which is equal to~$(|X_n|)_{n≥0}$. 

Take $\Fb^{R}$. In this forest, vertices of type $0$ are vertices that have been visited from their parent more than once. For each vertex $v$ of type $0$, add as siblings as many vertices as the number of time it was visited by $(X_n)_{n≥0}$ (minus one, as $v$ stands already for one step of $(X_n)_{n≥0}$) (that is even when the visit was made from a child), and attach them to the parent of $v$ (of type $1$, as in $\Fb^R$) of~$v$. Put~$0$ as their type, and give them the same length as that of $v$. Do the same for vertices~$u$ of type~$1$: for each visit of $(X_n)_{n≥0}$ to $u$ from a children, attach a new vertex to $u$, of type $0$, and length $0$. 

Now, each vertex in this forest correspond to a step of $(X_n)_{n≥0}$. In each set of sibling, re-order the vertices according to the time of visit of the step to which they correspond. The forest $\Fb^X$ is built. 

The sub-forest of $\Fb^X$ made up only of vertices of type $1$, denoted by $\Fb^{X^1}$, is the same as~$\Fb^{R^1}$, up to a chronological ordering of the vertices (vertices are ordered according to their first time of visit, rather than to the spatial position of the vertex in $\F$ to which they correspond). 

By construction, we get the following lemma: 
\begin{lemma}\label{lemma:contourFx}
The forest $\Fb^X$ is a leafed Galton--Watson forest with edge lengths. Let $H^\ell_X$ be its associated weighted height process. Then for each $n≥0$ we have $|X_n|=H^\ell_X(n)$. 
\end{lemma}
\noindent Thus, any result obtained on the scaling limit of $H^\ell_X$ will be valid for $(|X_n|)_{n≥0}$ too. The tree reduction is summarised in the following diagram. \\

\tikzstyle{block} = [rectangle, draw, text centered]
\tikzstyle{line} = [draw, -latex', thick]
\tikzstyle{line2} = [draw, latex'-latex',thick]
\begin{figure}[H]
\begin{tikzpicture}[node distance=1cm, auto]
\node (init) {};
\node [block] (A) {$\F$};
\node [block, right=3.2cm of A] (B) {$\{X_n,n≥0\}$};
\node [block, right=5.5cm of B] (C) {$(\Fb^R,\tyl,\ell)$};
\node [block, below=2cm of C] (D) {$\ldots$};
\node [block, left=4cm of D] (E) {$\Fb^{X}$};
\node [block, left=5cm of E] (F) {$(|X_n|)_{n≥0}$};
\node [block, right=3.5cm of C] (W) {$\Fb$};
\begin{small}
\path [line] (A) -- node [text width=3cm,midway,above ] {Keep only visited vertices} (B);
\path [line] (B) -- node [text width=5.3cm,midway,above ] {Untangle vertices, attach them to their closest ancestor of type 1, and keep the distance in $\ell$} (C);
\path [line] (C) -- node [text width=4cm,midway,right ] {Duplicate vertices visited more than once by $(X_n)_{n≥0}$} (D);
\path [line] (D) -- node [text width=3.5cm,midway,above ] {Re-order vertices chronologically} (E);
\path [line] (E) -- node [text width=4.5cm,midway,above ] {Take the weighted height function} (F);
\path [line2] (C) -- node [text width=3cm,midway,above ] {(Weighted) height function} (W);
\end{small}
\end{tikzpicture}
\end{figure}

\subsection{Convergence of the height processes associated with leafed Galton--Watson forests with edge lengths}\label{subsec:cvleafed}

Lemmas~\ref{lemma:contourFr} and~\ref{lemma:contourFx} indicate that the study of $\Fb$ and $(|X_n^\W|)_{n≥0}$ can boil down to that of certain leafed Galton--Watson forests with edge lengths. We intend in this section to give a result on the scaling limit of the height process associated with such forests. In~\cite{raphelis}, we proved under a condition of finite variance (Theorem~1) that their height process converges towards the reflected Brownian motion. Adjusting the proof of this theorem, we will be able to obtain an equivalent result in the infinite variance case. 

So let us consider the setting of~\cite{raphelis}: let $(F,\tyl,\ell)$ be a leafed Galton--Watson forest of reproduction law $\zeta$, recall that for every vertex $u\in F$ we let $\nu(u)$ (resp.\ $\nu^1(u)$) be the total number of children (resp.\ number of children of type $1$) of $u$ in $F$, and recall that we denote by $F^1$ the forest $F$ limited to its vertices of type $1$. 
We make the following hypotheses on $\zeta$: 
\begin{equation*}
{\bf (H_{\ell})}\begin{cases}\parbox{\textwidth}{
\begin{itemize}
\item[$(i)$] $\E\big[ \sum_{|u|=1,\tyl(u)=1} 1 \big]=\E\big[ \nu^1 \big]=1$,
\item[$(ii)$] There exists $\eps>0$ s.t.\ $\E\big[ \big(\sum_{|u|=1} 1\big)^{1+\eps} \big]=\E\big[ (\nu)^{1+\eps} \big]<\infty$,
\item[$(iii)$] There exists $C_0>0$ s.t.\ $\P\big( \sum_{|u|=1,\tyl(u)=1}1>x \big)=\P(\nu^1>x)\ssim{x\to\infty} C_0 x^{-\kappa}$, 
\item[$(iv)$] There exists $r>1$ s.t.\ $\E\Big[ \sum_{|u|=1} r^{\ell(u)} \Big]<\infty$. 
\end{itemize}}\end{cases}
\end{equation*}
Following the notation of~\cite{raphelis}, we let 
\begin{equation*}
m:=\E\big[\sum_{|u|=1}1\big]=\E[\nu]\quad \text{and}\quad \mu:=\E\big[\sum_{|u|=1,\tyl(u)=1} \ell(u)\big].
\end{equation*}
We let $H_F^1$ be the height process of $F^1$ and recall that we denote by $H_F^{\ell}$ the \textit{weighted} height process of $F$:
\begin{equation*}
\forall n\in\N,\;H_F^1(n):=|u^1(n)|\;\text{ and }\;H_F^{\ell}(n)=\sum_{\root≤v≤u(n)}\ell(v), 
\end{equation*}
where $u(n)$ (resp.\ $u^1(n)$) stands for the $n$\up{th} vertex of $F$ (resp.\ $F^1$) taken in the lexicographical order. Our main result on leafed Galton--Watson forests with edge lengths,  which is an extension of Theorem~1 of~\cite{raphelis}, is the following. 
\begin{theorem}{\label{th:bitype}}
Let $(F,\tyl,\ell)$ be a leafed Galton--Watson forest with edge lengths, with offspring distribution $\zeta$ satisfying hypothesis $\bf (H_{\ell})$. The following convergence in law holds for the Skorokhod topology on the space $\mathbb{D}(\R_+,\R)$ of càdlàg functions: 
\begin{align*}
&\frac{1}{n^{1-1/\kappa}}\left(\Big(H_F^{\ell}(\fl{ns})\Big)_{s≥0},(H_F^{1}(\fl{ns}))_{s≥0}\right) \substack{ \Longrightarrow \\ n\to\infty} \frac{1}{(C_0|\Gamma(1-\kappa)|)^{\frac{1}{\kappa}}}\left(\Big(\mu H_{m^{-1}s}\Big)_{s≥0},(H_s)_{s≥0}\right)&&\text{if $1<\kappa<2$}, \\
&or\\
&\frac{1}{(n\ln^{-1}(n))^{\frac{1}{2}}}\left(\Big(H_F^{\ell}(\fl{ns})\Big)_{s≥0},(H_F^{1}(\fl{ns}))_{s≥0}\right) \substack{ \Longrightarrow \\ n\to\infty} \frac{1}{(C_0)^{\frac{1}{2}}}\left(\Big(\mu |B_{m^{-1}s|}\Big)_{s≥0},(|B_s|)_{s≥0}\right)&&\text{if $\kappa=2$}, 
\end{align*}
where $H$ is the continuous-time height process of a spectrally positive Lévy process $Y$ of Laplace transform $\E[\exp(-\lambda Y_t)]=\exp(t\lambda^\kappa)$ for $\lambda,t≥0$, $(B_t)_{t≥0}$ is a standard Brownian motion, and where $C_0$ is the constant for which ${\bf (H_{\ell})}(iii)$ is satisfied. 
\end{theorem}
\begin{proof}
See the appendix. 
\end{proof}

We acknowledge that hypotheses ${\bf (H_{\ell})} (ii)$ and ${\bf (H_{\ell})} (iv)$ can probably be weakened, but this would be unnecessary for the sequel of our paper and this would lengthen the proof.

\subsection{Proof of Proposition~\ref{prop:forest}}\label{subsec:conclusionth2}
Let us introduce the following proposition, which will allow us to apply Theorem~\ref{th:bitype} to the forests $\Fb^R$ and $\Fb^X$. 

\begin{proposition}\label{prop:regvar}
Suppose $\bf (H_c)$ and $\bf (H_\kappa)$ for a certain $\kappa\in(1;2]$, and suppose that the distribution of the point process $N$ is non-lattice. Then the leafed Galton--Watson forests with edge lengths $\Fb^R$ and $\Fb^X$ satisfy hypotheses $\bf (H_{\ell})$. In particular, they satisfy ${\bf H_\ell}(iii)$ for the same constant. Moreover, the associated constants $\mu,m$ are respectively
\begin{align*}
&\mu_R=\mu:=(a_1 b_1)^{-1}\quad\text{and}\quad m_R=a_1^{-1}\quad\text{for $\Fb^R$},\\
&\mu_X=\mu=(a_1 b_1)^{-1}\quad\text{and}\quad m_X=2(a_1 b_1)^{-1}\quad\text{for $\Fb^X$}, 
\end{align*}
where $a_1$ and $b_1$ are defined in Subsection~\ref{subsec:multitype}
\end{proposition}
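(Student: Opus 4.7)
My plan is to verify, in turn, each of the four clauses of hypothesis $\bf (H_l)$ for the two forests $\Fb^R$ and $\Fb^X$, and then to read off the values of $m$ and $\mu$ from the invariant measure of the multitype skeleton set up in Subsection~\ref{subsec:multitype}. The fundamental structural observation is that, under $\P_1$, a type-$1$ vertex $u' \in \Fb^R$ is in bijection with a vertex $u \in \Fb$ such that $\beta(u) = 1$; its full progeny in $\Fb^R$ corresponds to $\cal{B}_u^1$ and its type-$1$ progeny to $\cal{L}_u^1$, so $\nu = B^1$ and $\nu^1 = L^1$. The forest $\Fb^X$ has exactly the same type-$1$ skeleton $\Fb^{X^1}$ as $\Fb^{R^1}$ up to reordering of siblings, hence the type-$1$ offspring distribution is identical; what changes in $\Fb^X$ is the count of type-$0$ children, which under $\P_1$ equals (essentially) the total number of visits $\sum_{v \in \cal{B}^1} \beta(v)$ paid to $\cal{B}^1$ during the excursion to the next generation of type-$1$ vertices.

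Clause $(i)$, namely $\E_1[L^1] = 1$, reflects null recurrence of the random walk and falls out of the multitype branching identification carried out in Subsection~\ref{subsec:multitype} together with $\min_{[0,1]} \psi = 1$ from $\bf (H_c)$. For $(ii)$, the required $(1+\eps)$-moment of $B^1$ for $\Fb^R$ (respectively of $\sum_{v \in \cal{B}^1} \beta(v)$ for $\Fb^X$) follows from the moment hypothesis in $\bf (H_\kappa)$ (in particular $\kappa > 1$), combined with the fact that $\cal{B}^1$ is a finite chain of multitype Galton--Watson generations with spectrally nondegenerate offspring. For $(iv)$, the edge lengths $\ell(v') = |v|-|u|$ measure the depth of an element of $\cal{B}^1$ above its nearest type-$1$ ancestor; a standard many-to-one argument along a spine, using $\psi'(1) < 0$ under $\bf (H_c)$, shows that the probability of $\cal{B}^1$ reaching generation $k$ is exponentially small in $k$, which yields the exponential moment of $\ell$ for both forests.

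Clause $(iii)$, the tail asymptotic $\P_1(L^1 > x) \sim C_0 x^{-\kappa}$, is by far the most delicate point and is the main analytic contribution of the paper: the whole of Subsection~\ref{subsec:cvqueue1} is devoted to it. The strategy, inspired by Kesten, Kozlov and Spitzer~\cite{kks}, is to express $L^1$ in terms of hitting and return statistics of a random walk in random environment on a biased tree and to apply Kesten's renewal theorem~\cite{kesten} to identify the polynomial rate and the exact constant $C_0$. Since the type-$1$ offspring laws of $\Fb^R$ and $\Fb^X$ are equal, this single estimate serves both forests with the same $C_0$.

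Granting these four points, the identification of the constants is essentially bookkeeping. The invariant measure $(a_i, b_i)$ of the multitype Markov chain from Subsection~\ref{subsec:multitype} gives, via many-to-one, $m_R = \E_1[B^1] = 1/a_1$ and $\mu_R = \E_1[\sum_{v \in \cal{L}^1} |v|] = 1/(a_1 b_1)$; for $\Fb^X$, the type-$1$ offspring is unchanged so $\mu_X = \mu_R$, while counting also type-$0$ children and using that each visit to a non-type-$1$ vertex is matched by a reciprocal step from its parent gives the extra factor $m_X = 2/(a_1 b_1)$. The single real obstacle is the tail estimate $(iii)$; all other items are routine given the multitype decomposition and the moment assumptions of $\bf (H_\kappa)$.
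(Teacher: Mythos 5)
You correctly identify the paper's overall architecture: the reduction of $\bf (H_l)$ to moments and tails of $L^1$, $B^1$ and $\widetilde{B}^1:=\sum_{u\in\cal{B}^1}\beta(u)$ under $\P_1$; the fact that clause $(iii)$ is the hard part, handled via the Kesten--Kozlov--Spitzer strategy of Subsection~\ref{subsec:cvqueue1}; and the identification of $\mu_R,\mu_X,m_R,m_X$ through the many-to-one lemma and the invariant measure $\pi_i=a_ib_i$ of the type chain $(\phi_k)_{k\geq0}$. Clause $(i)$, clause $(iii)$, and the computation of the constants are all consistent with the paper. The gaps are in your treatments of $(ii)$ and $(iv)$, and these are precisely the two places where nontrivial work is required.

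For $(iv)$: after the multitype many-to-one lemma, $\E_1[\sum_{u\in\cal{B}^1}\beta(u)r^{|u|}]<\infty$ reduces to $\Eh_1[\sum_{k=1}^{\tauh_1}r^k]<\infty$, a geometric bound on the return time $\tauh_1$ of $(\phi_k)$ to state $1$. Your claim that this follows from ``a standard many-to-one argument along a spine, using $\psi'(1)<0$'' is not sufficient: $(\phi_k)$ lives on the unbounded state space $\N^*$, and the positive drift of the spine potential does not by itself control the excursions of $(\phi_k)$ to large values. The paper establishes geometric ergodicity via a Lyapunov function $F(i)=\Gamma(i+1+\alpha)/\Gamma(i+1)$, $\alpha\in(0;\kappa-1)$, satisfying $\sum_j\ph_{i,j}F(j)\le dF(i)$ for large $i$ (Lemma~\ref{lemma:applyapunov}), and invokes Theorem~15.3.3 of Meyn--Tweedie; the drift condition uses $\psi(\kappa)=1$ from $\bf (H_\kappa)$, not only $\psi'(1)<0$ from $\bf (H_c)$. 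For $(ii)$: attributing $\E_1[(\widetilde{B}^1)^{1+\eps}]<\infty$ to ``$\cal{B}^1$ being a finite chain of multitype Galton--Watson generations with spectrally nondegenerate offspring'' is not an argument. The offspring $\nu^1$ is heavy-tailed with index $\kappa\le2$, so $\widetilde{B}^1$ is close to failing to have a $\kappa$-th moment, and the margin $\alpha<\kappa-1$ is exactly what must be exploited. The paper's Lemma~\ref{lemma:momB1} is a delicate inductive computation combining a generation-by-generation decomposition of $\cal{B}^1$, the spine decomposition under $\Ph$, Hölder's inequality, the return-time estimate~(\ref{eq:taulog}), the Lyapunov bound~(\ref{eq:majtaualpha}), and negative-binomial moment bounds (Lemma~\ref{lemma:NegBin}), all controlled by the moment assumptions in $\bf (H_\kappa)$. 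Neither $(ii)$ nor $(iv)$ is routine, and your proposal as written does not supply a mechanism for either.
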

\begin{proof}
The proof of this proposition is the subject of Section~\ref{sec:technique}. Hypothesis ${\bf (H_\ell)}(i)$ is shown to be  satisfied by the laws of $\Fb^R$ and $\Fb^X$ in Lemma~\ref{lemma:Hl1}, hypothesis~${\bf (H_\ell)}(iv)$ in Lemma~\ref{lemma:lyapunov}, hypothesis ${\bf (H_\ell)}(ii)$ in Lemma~\ref{lemma:momB1}, and hypothesis ${\bf (H_\ell)}(iii)$ in Proposition~\ref{prop:xkappa}. The constants are computed in Lemma~\ref{lemma:constants}. 
\end{proof}
 Let us give a lemma on the range $R_n$ the cardinal of $\mathcal{R}_n$ before giving the proof of Proposition~\ref{prop:forest}.

\begin{lemma}\label{lemma:Rn}
Let $R_n:=\#\{X_k, k≤n\}$ be the number of different vertices visited by the walk at time $n$. Then, as $n\to\infty$, 
\begin{equation*}
R_n/n \to m_R/m_X\quad \text{a.s.}
\end{equation*}
\end{lemma}
\begin{proof}
Let $R^1_n:=\#\{X_k, k≤n\st \beta(X_k)=1\}$ be the number of vertices visited by the walk at time $n$ that will eventually be of type $1$. For any $k≥1$, let us denote by $\chi^X(k)$ (resp.\ $\chi^R(k)$) the index for the lexicographical order of the $k$\up{th} vertex of type $1$ in $\Fb^X$ (resp.\ in ${\Fb^R}$). Then
\begin{equation*}
\chi^X(R^1_n)≤n≤\chi^X(R^1_{n+1})\quad\text{and}\quad \chi^R(R^1_n)≤R_n≤\chi^R(R^1_{n+1}).
\end{equation*}
According to the equation below equation~(2.10) in the proof of Proposition~5 of~\cite{raphelis} (where $\chi$ is denoted by $\psi$), almost surely $\chi^X(R^1_n)/R^1_n\to m_X$ and $\chi^R(R^1_n)/R^1_n\to m_R$ as $n→∞$. As $|R_{n+1}-R_n|≤1$, we obtain $R_n/n\to m_R/m_X$ as $n→∞$ almost surely. 
\end{proof}
We are now ready to give the proof of Proposition~\ref{prop:forest}. This proof is summarized in a diagram at the end. 
\begin{proof}[Proof of Proposition~\ref{prop:forest}]
Recall that we denote by $H^\ell_R$ (resp.\ $H^\ell_X$) the weighted height process of the leafed Galton--Watson forest with edge lengths $\Fb^R$ (resp.\ $\Fb^X$). Notice also that $H_R^1$ (resp.\ $H_X^1$) the height process restricted to vertices of type $1$ associated with $\Fb^R$ (resp.\ $\Fb^X$) is actually the non-weighted height process associated with $\Fb^{R^1}$ (resp.\ $\Fb^{X^1}$). Let $K^\star$ be the constant for which the offspring distributions of $\Fb^R$ and $\Fb^X$ satisfy ${\bf (H_{\ell})} (iii)$, and let us denote for every $n≥1$, 
\begin{equation*}
c_n:=(K^\star|\Gamma(1-\kappa)|)^{-1/\kappa}n^{1-1/\kappa}\quad \text{if $\kappa\in(1;2)$}\qquad\textit{or}\qquad c_n:=((2K^\star)^{-1} n\ln^{-1}(n))^{1/2}\quad \text{if $\kappa=2$}. 
\end{equation*}
Now according to Proposition~\ref{prop:regvar}, we can apply Theorem~\ref{th:bitype} to $\Fb^R$: for the Skorokhod topology on the space of càdlàg functions, we have the following convergence in law: 
\begin{equation}\label{eq:cvHr'}
c_n^{-1}\left( ({H^\ell_R}(\fl{nt}))_{t≥0},{H^1_{R}}(\fl{nt})_{t≥0} \right)\Rightarrow \left( \mu(H_{m_R^{-1} t})_{t≥0},(H_t)_{t≥0} \right), 
\end{equation} 
where $H$ is the continuous-time height process of a spectrally positive Lévy process $Y$ of Laplace transform $\E[\exp(-\lambda X_t)]=\exp(t\lambda^\kappa)$ for $t,\lambda>0$ (if $\kappa=2$, then $H=\sqrt{2}|B|$ where $B$ is a standard Brownian motion). 

Now notice that for every $n≥1$, $\mathcal{R}_n$ is coded by $({H_R^\ell}(k))_{1≤k≤R_n}$. This together with~(\ref{eq:cvHr'}) and Lemma~\ref{lemma:Rn} ($R_n/n\to m_R/m_X$) yield the following convergence: 
\begin{equation}\label{eq:cvT}
c_n^{-1}\left( \mathcal{R}_n,\Tr_{({H^1_{R}}(\fl{nt}))_{0≤t≤1}} \right)\Rightarrow \left( \mu\Tr_{(H_{m_X^{-1} t})_{0≤t≤1}},\Tr_{(H_t)_{0≤t≤1}} \right), 
\end{equation} 
where the convergence for the Gromov-Hausdorff topology is due to Lemma~2.4 of~\cite{le-gall-rt} (the convergence of the height function implying that of the contour function in our case, as we can use the same arguments as those of Theorem~2.4.1 of~\cite{duquesne-le-gall}). Now, still thanks to Proposition~\ref{prop:regvar}, let us apply Theorem~\ref{th:bitype} to $\Fb^X$:
\begin{equation}\label{eq:cvHX}
c_n^{-1}\left( (H^\ell_X(\fl{nt}))_{t≥0},\Tr_{(H^1_{X}(\fl{nt}))_{0≤t≤1}} \right)\Rightarrow \left( \mu(H_{m_X^{-1} t})_{t≥0},\Tr_{(H_t)_{0≤t≤1}} \right). 
\end{equation} 
Notice that by construction, $\Fb^{X^1}$ is equal to ${\Fb^{R^1}}$ up to a re-ordering of the vertices. Therefore, $\Tr_{({H^1_{R}}(\fl{nt}))_{t≥0}}=\Tr_{(H^1_{X}(\fl{nt}))_{t≥0}}$ for all $n\in\N^*$, $t≥0$. Moreover, according to Lemma~\ref{lemma:contourFx}, we have for every $n≥0$ that $H^\ell_{X}(n)=|X_n|$. So equation~(\ref{eq:cvHX}) yields: 
\begin{equation}\label{eq:cvXn}
c_n^{-1}\left( (|X_{\fl{nt}}|)_{t≥0},\Tr_{({H^1_{R}}(\fl{nt}))_{0≤t≤1}} \right)\Rightarrow \left( \mu(H_{m_X^{-1} t})_{t≥0},\Tr_{(H_t)_{0≤t≤1}} \right). 
\end{equation} 
Together with~\eqref{eq:cvT} (and the scaling property: $(H_{ct})_t$ has same law as $(c^{1-\frac{1}{\kappa}}H_t)_t$), this yields the convergence in law of the couple 
\begin{equation*}
c_n^{-1}\left((|X_{\fl{nt}}|)_{t≥0},\mathcal{R}_n\right)\Rightarrow\mu(m_X)^{-1+\frac{1}{\kappa}}\left((H_{t})_{t≥0},\Tr_{(H_t)_{0≤t≤1}}\right).
\end{equation*}
This can be seen by the following reasoning. According to Skorokhod's representation theorem, there exists a probability space in which the convergence~(\ref{eq:cvXn}) holds \as and therefore in probability. It is then possible in this new probability space to build a sequence of random variables which has same law as $(\mathcal{R}_n)_{n≥1}$, and which depends on $(|X_n|)_{n≥0}$ and $\mathcal{\Fb}^{R^1}$ as initially. Now thanks to the convergence of $\mathcal{T}_{H_R^1}$ in this space, the convergence of~(\ref{eq:cvT}) implies the convergence in probability of $\mathcal{R}_n$ in the new probability space towards the real tree coded by the continuous time height process. This implies the convergence in probability (still in this space) of the couple $c_n^{-1}\Big((|X_n|)_{t≥0},\mathcal{R}_n\Big)$ towards $\mu\Big( (H_{m_X^{-1}t})_{t≥0},\Tr_{(H_{m_X^{-1}t})_{0≤t≤1}} \Big)$, convergence which then holds in law for the initial couple (which was in the initial probability space).
\end{proof}

For the sake of understanding, we summarize the proof in the following diagram. \\

\tikzstyle{block} = [rectangle, text centered,draw]
\tikzstyle{blocksans} = [rectangle, text centered]
\tikzstyle{line} = [draw, thick, -latex']
\tikzstyle{line2} = [draw, thick, latex'-latex']
\tikzstyle{linedouble} = [draw,double, thick, -latex']
\tikzstyle{linesans} = [draw, thick, -]
\begin{figure}[H]
\begin{tikzpicture}[node distance=1cm, auto]
\node (init) {};
\node [block] (A) {$\displaystyle c_n^{-1}\left( ({H^\ell_R}(\fl{nt}))_{t},{H^1_{R}}(\fl{nt})_{t} \right) \Rightarrow \left( \mu(H_{m_R^{-1} t})_{t},(H_t)_{t} \right) $};
\node [blocksans, right=0cm of A] (T) {(Theorem~\ref{th:bitype})};
\node [block, below=2cm of A] (B) {$\displaystyle c_n^{-1}\left( \mathcal{R}_n,\Tr_{({H^1_{R}}(\fl{nt}))_{0≤t≤1}} \right)\Rightarrow \left( \mu\Tr_{(H_{m_X^{-1} t})_{0≤t≤1}},\Tr_{(H_t)_{0≤t≤1}} \right), $};
\node [below=1cm of B] (X) {};
\node [right=0cm of X] (W) {};
\node [left=0.15cm of W] (Y) {};
\node [block, right=1cm of Y] (C) {$c_n^{-1}\left((|X_{\fl{nt}}|)_{t≥0},\mathcal{R}_n\right)\Rightarrow\mu(m_X)^{-\frac{1}{\kappa}}\left((H_{t})_{t≥0},\Tr_{(H_t)_{0≤t≤m_X}}\right)$};
\node [block, below=2cm of B] (D) {$c_n^{-1}\left( (|X_{\fl{nt}}|)_{t≥0},\Tr_{({H^1_{R}}(\fl{nt}))_{0≤t≤1}} \right)\Rightarrow \left( \mu(H_{m_X^{-1} t})_{t≥0},\Tr_{(H_t)_{0≤t≤1}} \right)$};
\node [block, below=2cm of D] (E) {$c_n^{-1}\left( (H^\ell_X(\fl{nt}))_{t≥0},\Tr_{(H^1_{X}(\fl{nt}))_{0≤t≤1}} \right)\Rightarrow \left( \mu(H_{m_X^{-1} t})_{t≥0},\Tr_{(H_t)_{0≤t≤1}} \right)$};
\begin{small}
\node [blocksans, right=0cm of E] {(Theorem~\ref{th:bitype})};
\path [line] (A) -- node [text width=6cm,midway,right ] {$\mathcal{R}_n$ coded by $H^\ell_R$ (Lemma~\ref{lemma:contourFr}) and Lemma~\ref{lemma:Rn}} (B);
\path [line] (E) -- node [text width=6cm,midway,right ] {$H^\ell_X(n)=|X_{n}|$ (Lemma~\ref{lemma:contourFx})} (D);
\path [linesans] (B) -- (D);
\path [line] (Y) -- (C);
\end{small}
\end{tikzpicture}
\end{figure}

\section{Study of the trace of the random walk and change of measure}\label{sec:studylocaltimes}

\subsection{Quenched law of $\beta$}\label{subsec:lawbeta}

Under $\P^\V$, the process $(\beta(u))_{u\in\Fb}$ introduced in Subsection~\ref{subsec:reduction} is not a multitype Galton--Watson tree (unless $\V$ is a regular tree). However the description of its law under $\P^\V$ will allow us to understand its law under $\P$, and it will also be very useful in Subsection~\ref{subsec:cvqueue1}. 

Recall that we denote by $(p_{u,v})_{u,v,\in\F}$ the transition probabilities of the random walk $(X_n)_{n≥0}$. 

\begin{lemma}\label{lemma:lawbeta}
Let $k≥1$ and $u\in\Fb$. Then, under $\P^\V$, conditionally on $\beta(u)=k$,the random variable $(\beta(v))_{v\in c(u)}$ has negative multinomial distribution with parameters $k$ and $(p_{u,v})_{v\in c(u)}$ (see Chapter 36 of~\cite{johnson} for details on this law). That is for every $k≥1$ and every sequence with finite support $(\ell_v)_{v\in c(u)}$, 
\begin{equation}\label{eq:lawbeta}
\P^\V\big((\beta(v))_{v\in c(u)}=(\ell_v)_{v\in c(u)}|\beta(u)=k\big)=p_{u,\parent{u}}^k\binom{k-1+\sum_{v\in c(u)}\ell_v}{(\ell_v)_{v\in c(u)},k-1}\times \prod_{v\in c(u)}p_{u,v}^{\ell_v}, 
\end{equation}
where for $(a_n)_n$ a sequence with finite support, $\binom{\sum_n a_n}{(a_n)_{n}}:=\frac{(\sum_n a_n)!}{\prod_n a_n!}$ stands for the multinomial coefficient of the sequence $(a_n)_n$. Its generating function is, for every $(s_v)_{c(u)}\in[0;1]^{c(u)}$, 
\begin{equation*}
\E^\V[\prod_{v\in c(u)} s_v^{\beta(v)}|\beta(u)=k]=\Big(\frac{p_{u,\parent{u}}}{1-\sum_{v\in c(u)} p_{u,v}s_v}\Big)^k. 
\end{equation*}
\end{lemma}
\begin{remark} A negative multinomial random variable of parameter $k$ and $(p_i)_{i≥1}$ has the law of the sum of $k$ independent negative multinomial random variables of parameters $1$ and $(p_i)_{i≥1}$. 
\end{remark}

\begin{proof}
If $\beta(u)=1$ and $c(u)=\{v\}$, that is $u$ has only one child, then $\beta(v)$ is a geometric distribution with parameter $p_{u,\parent{u}}$ (each time we have $X_n=u$, either it goes to $v$ ("failure" event, with probability $p_{u,v}$) and $\beta(v)$ is increased by one, or it goes to $\parent{u}$ ("success" event, with probability $p_{u,\parent{u}}$) and never comes back, as there was only one ascent from $\parent{u}$ to $u$). In this case, for every $\ell≥0$, 
\begin{equation*}
\P^\V\big(\beta(v)=\ell|\beta(u)=1\big)=p_{u,\parent{u}}p_{u,v}^\ell. 
\end{equation*}

If $u$ has several children, then each time we have $X_n=u$, either it goes to a vertex $v\in c(u)$, with probability $p_{u,v}$, the local time $\beta(v)$ is increased by one, and since the random walk is recurrent it eventually comes back to $u$. Or it goes to $\parent{u}$ and never comes back. In this case, we have for any $(\ell_v)_{v\in c(u)}$,
\begin{equation*}
\P^\V\big((\beta(v))_{v\in c(u)}=(\ell_v)_{v\in c(u)}|\beta(u)=1\big)=p_{u,\parent{u}}\binom{\sum_{v\in c(u)}\ell_v}{(\ell_v)_{v\in c(u)}} \prod_{v\in c(u)}p_{u,v}^{\ell_v}. 
\end{equation*}

Finally, if $\beta(u)=k$, $(\beta(v))_{v\in c(u)}$ is the sum of the local times of $k$ \iid excursions. Another way to describe it : $(\beta(v))_{v\in c(u)}$ is the number of outcomes of the events $\{X_n=u,X_{n+1}=v\}$ for $v\in c(u)$, before the event $\{X_n=u,X_{n+1}=\parent{u}\}$ has occurred $k$ times. That is $(\beta(v))_{v\in c(u)}$ has negative multinomial distribution (see Chapter~36 of~\cite{johnson}) with parameters $k$ and $(p_{u,v})_{v\in c(u)}$, and has distribution as prescribed in the statement of the lemma. The generating function of such a law is given in~(36.1) of~\cite{johnson}. 
\end{proof}

\begin{remark} In this proof and in the remaining of the paper, when mentioning the geometric distribution, we consider the definition for which the support is $\{0,1,2,\dots\}$. 
\end{remark}

We give a few consequences of this lemma, some of which may seem trivial but will actually be quite helpful to understand the process. 

\begin{lemma}\label{lemma:lawbetaconseq}
Let $u\in\F$. 
\begin{itemize}
\item Let $c'(u)\subset c(u)$. Under $\P^\V$, conditionally on $\beta(u)$,the random variable $(\beta(v))_{v\in c'(u)}$ has negative multinomial distribution with parameters $\beta(u)$ and $(\ee{-\Delta V(v)}/(1+\sum_{w\in c'(u)}\ee{-\Delta V(w)}))_{v\in c'(u)}$.
\item Let $c'(u)\subset c(u)$. Under $\P^\V$, conditionally on $\beta(u)$, the random variable $\sum_{v\in c'(u)} \beta(v)$ has the law of the sum of $\beta(u)$ geometric random variables of parameter $1/(1+\sum_{w\in c'(u)}\ee{-\Delta V(w)})$ (that is of expectation $\beta(u)\sum_{w\in c'(u)}\ee{-\Delta V(w)})$). 

In particular, for any $v\in c(u)$, under $\P^\V$, $\beta(v)$ has the law of the sum of $\beta(u)$ independent geometric random variables of parameter $1/(1+\ee{-\Delta V(v)})$
\item For any $w\in\F$ such that $u≤w$, 
\begin{equation}\label{eq:espcondbeta}
\E^\V\big[\beta(w)|\beta(u)\big]=\beta(u)\ee{-(V(w)-V(u))}. 
\end{equation}
\end{itemize}
\end{lemma}

\begin{proof}
The first point can be established by simply summing over all $(\ell_v)_{v\in c(u)\setminus c'(u)}$ in \eqref{eq:lawbeta}, but this can also be seen as follows. Since $(X_n)_{n≥0}$ is a Markov chain under $\P^\V$, we can consider $(\widetilde{X}_n)_{n≥0}$ its restriction to the tree $\T$ without the subtrees $(\T_v)_{v\in c(u)\setminus c'(u)}$. The probability to go from $u$ to $v\in c'(u)$ is now $\ee{-\Delta V(v)}/(1+\sum_{w\in c'(u)}\ee{-\Delta V(w)})$, and we can do the same proof as that of Lemma~\ref{lemma:lawbeta} to get the result. 

The second point can be shown using the same restricted Markov chain as in the first point, but considering the event $\cup_{v\in c'(u)}\{$the random walk goes to $v$\empty$\}$, against the event $\{$the random walk goes to $\parent{u}$\empty$\}$, which happens with probability $1/(1+\sum_{v\in c'(u)}\ee{-\Delta V(v)})$. 

Finally, the third point can be shown by induction, as for every $v\in\T$, one has (according to the first point of the lemma) $\E[\beta(v)|\beta(\parent{v})]=\beta(\parent{v})\frac{p_{u,\parent{u}}}{p_{u,v}}=\beta(\parent{v})\ee{-\Delta V(v)}$, 
\begin{align*}
\E^\V\big[\beta(w)|\beta(u)\big]=\E^\V\big[\E^\V\big[\beta(w)|\beta(\parent{w}),\beta(u)\big]|\beta(u)\big]&=\E^\V\big[\beta(\parent{w})|\beta(u)\big]\ee{-(V(w)-V(\parent{w}))}\\
&=\dots=\beta(u)\ee{-(V(w)-V(u))}. 
\end{align*} 
\end{proof}

\subsection{Law of $(\Fb,\beta)$ as a multitype Galton--Watson forest}\label{subsec:multitype}

Recall from Lemma~\ref{lemma:gw} that under $\P$, the marked forest $(\Fb,\beta)$ is a multitype Galton--Watson forest with roots of type $1$.

We denote by $\bzeta=(\zeta_i)_{i≥1}$ the generic offspring distribution of this multitype Galton--Watson tree (that is for every $i≥1$, $\zeta_i$ stands for the offspring distribution of a vertex of type~$i$). For every $i≥1$, we denote by $\P_i$ the law of a generic multitype Galton--Watson tree $(\Tb,\beta)$ with such an offspring distribution, and with initial type $i$.

Considering the setting of multitype Galton--Watson forests used in~\cite{raphelis}, (\ref{eq:lawbeta}) yields that the mean matrix $(m_{i,j})_{i,j≥1}$ of $\Tb$ is given by 
\begin{equation}\label{eq:defmij}
m_{i,j}:=\E_i\Big[ \sum_{|u|=1}\1{\beta(u)=j} \Big]=\binom{i+j-1}{j}\Eb\Big[ \sum_{|u|=1} \frac{\ee{-jV(u)}}{(1+\ee{-V(u)})^{i+j}} \Big]  
\end{equation} 
(to establish the previous equation, one just has to use the expression of the quenched probability $\P^\V(\beta(u)=j)$ for a given $u$ and the linearity of the expectation)
Let us introduce a random variable $\widehat{S}_1$ with law defined by $\E[ f(\widehat{S}_1) ]=\Eb[ \sum_{|u|=1} f(V(u))\ee{-V(u)}]$ for any bounded continuous real function $f$. We define $(\widehat{S}_k)_{k≥0}$ as the random walk starting from $0$ with step distribution $\widehat{S}_1$. Notice that under $\bf (H_c)$, $\E[\widehat{S}_1]=-\psi'(1)>0$ so $ \sum_{k≥1} \ee{-\widehat{S}_k}\in(0;\infty)$ almost surely. Letting for all $i≥1$
\begin{align}\label{eq:defaibi}
&a_i:=C_a^{-1} \E\left[ \frac{\Big( \sum_{k≥1} \ee{-\widehat{S}_k} \Big)^{i-1}}{\Big(1+\sum_{k≥1} \ee{-\widehat{S}_k} \Big)^{i+1}} \right],\nonumber\\
&b_i:=C_a i,
\end{align}
with $C_a:=\E\left[\big(1+\sum_{k≥1} \ee{-\widehat{S}_k}\big)^{-1} \right]$, then $(a_i)_{i≥1}$ and $(b_i)_{i≥1}$ are respectively left and right eigenvectors of the matrix $(m_{i,j})_{i,j≥1}$ associated with the eigenvalue $1$ (this is proved in Lemma~6.2 of~\cite{aidekon-raphelis}). We normalised $a$ and $b$ so that $\sum_i a_i=1$ and $\sum_i a_i b_i=1$ for convenience in the sequel, and for the notation to be coherent with~\cite{raphelis}. 

\subsection{Change of measure on $(\Tb,\beta)$}\label{subsec:changemeasure}

We let for all $n≥1$, 
\begin{equation}\label{eq:defZn}
Z_n:=\sum_{|u|=n}\beta(u)
\end{equation}
be the {\it multitype additive martingale} of $(\Tb,\beta)$. If for $n≥1$ we let $\mathcal{F}_n$ be the sigma-algebra generated by $(u,\beta(u))_{u\in\Tb,|u|≤n}$, then for every $i≥1$, $(Z_n/i)_{n≥0}$ is an $\cal{F}_n$-martingale under $\P_i$ of expectation $1$ (this is due to the fact that $(b_i)_{i≥1}$ is a right eigenvector of $(m_{i,j})_{i,j≥1}$ associated with $1$). 

Let us introduce a new law $\Ph_i$ on marked trees with distinguished path $(\Tb,\beta,(w_n)_{n≥0})$ (potentially on an enriched probability space). Let $\widehat{\bzeta}=(\widehat{\zeta}_i)_{i≥1}$ be a probability law on $\bigsqcup_{n\in\N}{\N^*}^n$ of Radon-Nikodym derivative $Z_1$ with respect to $\bzeta$; that is for any $i≥1$, if $X\sim \zeta_i$, then $\widehat{X}\sim\widehat{\zeta}_i$ if and only if for any bounded real-valued function $f$ on $\bigsqcup_{n\in\N}{\N^*}^n$, $\E[f(\widehat{X})]=\E[\sum_{1≤k≤|X|}X_kf(X)]$ (where we denoted by $|X|$ the length of $X$, and by $X_k$ its components). For every $i≥1$, we construct $(\Tb,\beta,(w_n)_{n≥0})$ under $\Ph_i$ by induction as follows: 

\begin{itemize}
\item {\bf Initialisation} \\
Generation $0$ of $\Tb$ is only made up of the root $\root$ of given type $\beta(\root)=i$. We set $w_0=\root$. 
\item {\bf Induction} Let $n≥0$. Suppose that the tree up to generation $n$ and $(w_k)_{k≤n}$ have been built. The vertex $w_n$ has progeny according to $\hzeta_{\beta(w_n)}$. Other vertices $u$ of generation $n$ have progeny according to $\zeta_{\beta(u)}$. Then, choose a vertex at random among children $u$ of $w_{n}$, each with probability $\beta(u)/\Big(\sum_{\parent{v}=w_{n}} \beta(v)\Big)$ and set $w_{n+1}$ as this vertex. 
\end{itemize}

\noindent We denote by $\Eh_i$ the expectation associated with $\Ph_i$. The following proposition is easily deduced from~\cite{kurtz-lyons-pemantle-peres-multitype}: 

\begin{proposition} {\bf~\cite{kurtz-lyons-pemantle-peres-multitype}}\label{prop:markovtype}
Let $i≥1$. We have the following links between $\P_i$ and $\Ph_i$: 
\begin{itemize}
\item[(i)] Recall that for every $n≥0$, $\cal{F}_n=\sigma((u,\beta(u))_{u\in\Tb, |u|≤n})$. The restricted measure $\Ph_{i}|_{\cal{F}_n}$ is absolutely continuous with respect to $\P_{i}|_{\cal{F}_n}$ and is such that
\begin{equation*}
 \frac{\d \Ph_{i}|_{\cal{F}_n}}{\d\P_{i}|_{\cal{F}_n}}=\frac{1}{i}Z_n. 
\end{equation*}
\item[(ii)] Conditionally on $\cal{F}_n$, for all $u\in\Tb$ such that $|u|=n$, 
\begin{equation*}
\Ph_{i}\Big( w_n=u \, |\, \mathcal{F}_n \Big)=\frac{\beta(u)}{Z_n}
\end{equation*}
\item[(iii)] Under $\Ph$, the process $(\beta(w_k))_{k≥0}$ is a Markov chain on $\N$ with initial state $i$, and with transition probabilities $(\ph_{i,j})_{i,j≥1}$ where for all $i,j≥1$, 
\begin{equation}\label{eq:pij}
\ph_{i,j}:=\frac{b_i}{b_j}m_{i,j}=\binom{i+j-1}{i}\Eb\Big[ \sum_{|u|=1} \frac{\ee{-jV(u)}}{(1+\ee{-V(u)})^{i+j}} \Big]. 
\end{equation}
\end{itemize}
\end{proposition}

\noindent Notice that the Markov chain $(\beta(w_k))_{k≥0}$ introduced in $(iii)$ admits an invariant measure $(\pi_i)_{i≥1}$ where for all $i≥1$, 
\begin{equation}\label{eq:defpi}
\pi_i=a_i\times b_i= i\E\left[ \frac{\Big( \sum_{k≥1} \ee{-\widehat{S}_k} \Big)^{i-1}}{\Big(1+\sum_{k≥1} \ee{-\widehat{S}_k} \Big)^{i+1}} \right], 
\end{equation}
and that this measure is of total mass $1$ (using Tonelli's theorem and the formula $\sum_{i≥1} i (\frac{x}{1+x})^i=x(1+x)$ for $x>0$), thus making $(\beta(w_k))_{k≥0}$ a positive recurrent Markov chain. Proposition~\ref{prop:markovtype} yields the \textit{multitype many-to-one lemma}: 
\begin{lemma}\label{lemma:many-to-one-multitype}
For all $n \in \Ns$, let $g:{\N}^n \to \R_+$ be a positive measurable function and $X_n$ a positive $\cal{F}_n$-measurable random variable, then
\begin{equation*}
\E_{i}\Big[\sum_{|u|=n}\beta(u)g(\beta(u_1),\beta(u_2),\dots,\beta(u_n)) X_n \Big]=i\Eh_i\Big[g(\beta(w_1),\beta(w_2),\dots,\beta(w_n)) X_n \Big]. 
\end{equation*}
\end{lemma}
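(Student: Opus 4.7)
The plan is a direct computation that chains together the three parts of Proposition~\ref{prop:markovtype}. First I would rewrite the left-hand side by artificially inserting the factor $Z_n/Z_n$ inside the sum (this is legitimate because $Z_n \ge 1$ whenever generation $n$ is non-empty, i.e.\ whenever the sum is non-zero):
\[
\E_i\Big[\sum_{|u|=n}\beta(u)\,g(\beta(u_1),\dots,\beta(u_n))\,X_n\Big]
=\E_i\Big[Z_n\,X_n\sum_{|u|=n}\frac{\beta(u)}{Z_n}\,g(\beta(u_1),\dots,\beta(u_n))\Big].
\]
Then part (ii) of Proposition~\ref{prop:markovtype} recognises the inner sum as a conditional expectation under $\Ph_i$: since $w_n$ is drawn in generation $n$ with weight $\beta(u)/Z_n$ and its ancestors along the spine are $w_0=\root,w_1,\dots,w_{n-1}$, we have
\[
\sum_{|u|=n}\frac{\beta(u)}{Z_n}\,g(\beta(u_1),\dots,\beta(u_n))=\Eh_i\big[g(\beta(w_1),\dots,\beta(w_n))\,\big|\,\cal{F}_n\big].
\]

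Next I would apply part (i): for any non-negative $\cal{F}_n$-measurable random variable $Y$, the Radon-Nikodym relation reads $\E_i[Z_n\,Y]=i\,\Eh_i[Y]$. Taking $Y=X_n\cdot\Eh_i[g(\beta(w_1),\dots,\beta(w_n))\mid\cal{F}_n]$ (which is $\cal{F}_n$-measurable because $X_n$ is) and invoking the tower property, the $\Ph_i$-conditional expectation merges with the outer $\Eh_i$ to give
\[
\E_i\Big[\sum_{|u|=n}\beta(u)\,g(\beta(u_1),\dots,\beta(u_n))\,X_n\Big]=i\,\Eh_i\big[X_n\,g(\beta(w_1),\dots,\beta(w_n))\big].
\]
Finally, part (iii) identifies $\phi_k=\beta(w_k)$ for each $k$, and the claimed identity follows.

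There is no genuine obstacle here: once Proposition~\ref{prop:markovtype} is in hand the argument is pure bookkeeping. The two points one needs to keep in mind are the $\cal{F}_n$-measurability of $X_n$, used to pull it freely through the conditional expectation, and the fact that on the event $\{Z_n=0\}$ both sides vanish, so the $Z_n/Z_n$ trick introduces no defect.
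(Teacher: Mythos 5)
Your proof is correct, and it is essentially the same argument the paper has in mind: the paper gives no separate proof but simply states that Proposition~\ref{prop:markovtype} "yields" the lemma, and your derivation — inserting $Z_n/Z_n$, identifying the inner sum as $\Eh_i[g(\beta(w_1),\dots,\beta(w_n))\mid\cal{F}_n]$ via part~(ii), applying the Radon–Nikodym relation of part~(i), and renaming $\beta(w_k)=\phi_k$ by part~(iii) — is exactly the bookkeeping implicit in that statement.
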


\subsection{Understanding the law of $(\Tb,\beta)$ under the new measure $\Ph$}\label{subsec:lawtb}

The idea is that $(\Tb,\beta,(w_k)_{k≥0})$ under $\Ph$ can actually be seen as the natural trace of a series of random walks on a certain tree with spine. Let us begin by introducing this tree.

Recall that $N$ is the point process giving the offspring distribution of $\V$. Let us consider $\widehat{N}$ a point process defined as follows: for any bounded measurable function $f:\bigsqcup_{n\in\N\cup\{\N\}}{\R}^n\to\R$ we have $\Eb[f(\widehat{N})]=\Eb[\sum_{u\in N}\ee{-V(u)}f(N)]$. Hypothesis $\bf (H_c)$ ensures that this defines a probability law indeed. Let us introduce a law $\Pbh$ on $(\T,V)$ to which is added a spine $(\tilde{w}_k)_{k≥0}$ (potentially on an enriched probability space). We build $(\T,V,(\tilde{w}_k)_{k≥0})$ under $\Pbh$ by induction as follows: 
\begin{itemize}
\item {\bf Initialisation} \\
Generation $0$ of $\T$ is only made up of the root $\root$ and we set $V(\root)=0$. We set $\tilde{w}_0=\root$. 
\item {\bf Induction} Let $n≥0$. Suppose that the tree up to generation $n$ and $(\tilde{w}_k)_{k≤n}$ have been built. The vertex $\tilde{w}_n$ has progeny according to $\widehat{N}$ translated by $V(\tilde{w}_n)$. Other vertices $u$ of generation $n$ have progeny according to $N$ translated by $V(u)$. Then, choose a vertex at random among children $u$ of $\tilde{w}_{n}$, each with probability $\ee{-V(u)}/\Big(\sum_{\parent{v}=\tilde{w}_n}\ee{-V(v)}\Big)$ and set $\tilde{w}_{n+1}$ as this vertex. 
\end{itemize}
For every $k≥0$, we let
\begin{equation}\label{eq:defGk}
\cal{G}_k:=\sigma\Big((u,V(u))_{u\in\T,|u|≤k}\Big)\qquad\text{and}\qquad \cal{G}:=\bigvee_{k≥1}\cal{G}_k
\end{equation}
be the sigma-algebras generated by the environment, and
\begin{equation*}
W_k:=\sum_{|u|=k}\ee{-V(u)}
\end{equation*}
be the \textit{additive martingale} of the environment. Let $W_\infty:=\lim_{k→∞} W_k$ be its a.s.\ limit. According to~\cite{lyons}, under $\bf (H_c)$, $W_\infty$ is positive $\Pb^*$-almost surely.

We get the following proposition from~\cite{lyons}: 
\begin{proposition}\label{prop:lyons}{\bf \cite{lyons}} Let $n≥0$ and $u\in\T$ such that $|u|=n$. We have \begin{equation*}
\Pbh(\tilde{w}_n=u\mid \cal{G}_n)=\frac{\ee{-V(u)}}{W_n}. 
\end{equation*}
Moreover, the process $(V(\tilde{w}_k))_{k≥0}$ under $\Pbh$ has the same distribution as that of the random walk $(\widehat{S}_k)_{k≥0}$ introduced in~Subsection~\ref{subsec:multitype}. 
\end{proposition}
From this proposition we get the many-to-one lemma for the environment, which can be proved by induction: 
\begin{lemma}\label{lemma:many-to-one-environnement}
Let for every $n≥0$, $g:\R^n\to\R_+$ be a positive function. Then for any $k≥0$, and for any positive $\cal{G}_k$-measurable random variable $X_k$, 
\begin{equation*}
\Eb[\sum_{|u|=k}\ee{-V(u)}g(V(u_1),\dots,V(u_k))X_k]=\Ebh[g(V(\tilde{w}_1),\dots,V(\tilde{w}_k))X_k]=\Ebh[g(\widehat{S}_1,\dots,\widehat{S}_k)X_k], 
\end{equation*}
where $(\widehat{S}_k)_{k≥0}$ is the random walk with step distribution $\widehat{S}_1$ introduced in Subsection~\ref{subsec:multitype}. 
\end{lemma}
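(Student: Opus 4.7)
The first equality is the essential content of the lemma; the second equality $\Ebh[g(V(w_1),\dots,V(w_k))X_k] = \Ebh[g(\hat{S}_1,\dots,\hat{S}_k)X_k]$ is immediate from the second bullet of Proposition~\ref{prop:lyons}, which identifies the distribution of $(V(w_k))_{k\geq 0}$ under $\Pbh'$ with that of the random walk $(\hat{S}_k)_{k\geq 0}$. So the plan is to prove the first equality by induction on $k$.

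The base case $k=0$ is immediate since $V(\root)=0$ makes both sides equal to $X_0$. For the induction, the cleanest route is to first isolate the \emph{spinal change of measure},
\[
\Ebh[F] = \Eb[W_k F] \qquad\text{for every nonnegative }\scr{G}_k\text{-measurable }F,
\]
where $W_k = \sum_{|u|=k}\ee{-V(u)}$. This is the classical identity of Lyons~\cite{lyons} and can be proved by induction on $k$ directly from the construction of $\Pbh'$: at generation $k$ the only difference between $\Pbh'$ and $\Pb$ is that the spine vertex $w_k$ reproduces according to $\hat{N}$ rather than $N$; the defining relation $\Eb[f(\hat{N})] = \Eb[\sum_{x\in N}\ee{-x}f(N)]$ yields the local density $\sum_{x\in N}\ee{-x}$ on the progeny of $w_k$, and averaging this against the conditional distribution $\Pbh'(w_k=u\mid\scr{G}_k) = \ee{-V(u)}/W_k$ from Proposition~\ref{prop:lyons}(i) produces the global density $W_{k+1}$ on $\scr{G}_{k+1}$.

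Once this is in hand, the first equality unfolds in two lines: conditioning on $\scr{G}_k$ and using Proposition~\ref{prop:lyons}(i) to integrate over the spine gives
\[
\Ebh[g(V(w_1),\dots,V(w_k))X_k] = \Ebh\Bigl[\sum_{|u|=k}\tfrac{\ee{-V(u)}}{W_k}\,g(V(u_1),\dots,V(u_k))\,X_k\Bigr],
\]
and then applying the change of measure to the $\scr{G}_k$-measurable integrand cancels the factor $1/W_k$ and delivers $\Eb\bigl[\sum_{|u|=k}\ee{-V(u)}g(V(u_1),\dots,V(u_k))X_k\bigr]$, as required. The only genuinely substantive step is the inductive verification of the change-of-measure identity: one must carefully track the joint law of the environment and the spine across one generation, using the definition of $\hat{N}$ to account for the bias at the spine vertex. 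Everything else is formal bookkeeping, and indeed this is why the proof is merely sketched in the paper.
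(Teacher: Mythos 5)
Your proposal is correct and, given that the paper only remarks the lemma "can be proved by induction," it fills in exactly the route that phrase points to. The decomposition into (a) the Lyons change-of-measure identity $\Ebh[F]=\Eb[W_kF]$ for $\scr{G}_k$-measurable $F$, established inductively from the construction of $\Pbh'$ and the definition of $\hat{N}$, and (b) the projection of the spine via Proposition~\ref{prop:lyons}(i), is the standard proof and matches what the paper implicitly intends by citing~\cite{lyons}. Your verification of the inductive step — averaging the local density $\sum_{\parent{v}=u}\ee{-(V(v)-V(u))}$ against the spine weights $\ee{-V(u)}/W_k$ to recover $W_{k+1}$ — is sound, as is the final two-line deduction. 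No gaps.
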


Let us now consider for each vertex of the spine $\tilde{w}_i$ two \iid truncated nearest-neighbour random walks with same law $(X^{1,\tilde{w}_i}_n)_{n≥0}$ and $({X}^{2,\tilde{w}_i}_n)_{n≥0}$, each defined as follows. It starts on $\tilde{w}_i$. If it is on a vertex $u\in\T$, then for each vertex $v$ child of $u$, it will jump to $v$ with probability $\frac{\ee{-V(v)}}{\ee{-V(u)}+\sum_{\parent{z}=u}\ee{-V(z)}}$ and towards $\parent{u}$ with probability $\frac{\ee{-V(u)}}{\ee{-V(u)}+\sum_{\parent{z}=u}\ee{-V(z)}}$. If it reaches $\tilde{w}_{i-1}$, then it is killed instantly. 

\begin{figure}[H]
\begin{tikzpicture}[line cap=round,line join=round,>=triangle 45,x=1.3cm,y=0.8cm]
\clip(2.0,-1.5) rectangle (13.0,7.0);

\draw (8.8,1.0)-- (8.8,3.0);
\draw [dash pattern=on 1pt off 1pt] (8.8,1.0)-- (8.8,-1.0);
\draw (8.8,3.0)-- (8.8,5.0);
\draw (8.8,1.0)-- (11.8,2.0);
\draw (8.8,1.0)-- (10.8,2.0);
\draw (8.8,1.0)-- (9.8,2.0);
\draw (8.8,3.0)-- (9.8,4.0);
\draw (8.8,3.0)-- (10.8,4.0);
\draw [dash pattern=on 1pt off 1pt] (9.8,2.0)-- (9.8,2.5);
\draw [dash pattern=on 1pt off 1pt] (9.8,2.0)-- (10.3,2.5);
\draw [dash pattern=on 1pt off 1pt] (10.8,2.0)-- (11.3,2.5);
\draw [dash pattern=on 1pt off 1pt] (11.8,2.0)-- (12.8,2.5);
\draw [dash pattern=on 1pt off 1pt] (8.8,5.0)-- (8.8,6.0);
\draw [dash pattern=on 1pt off 1pt] (8.8,5.0)-- (9.8,6.0);
\draw [dash pattern=on 1pt off 1pt] (8.8,5.0)-- (10.8,6.0);
\draw [dash pattern=on 1pt off 1pt] (9.8,4.0)-- (10.3,4.5);
\draw [dash pattern=on 1pt off 1pt] (10.8,4.0)-- (10.8,4.5);
\draw [dash pattern=on 1pt off 1pt] (10.8,4.0)-- (11.733857473145854,4.4678642878968935);
\draw [line width=0.4pt,color=red] (8.6,1.0)-- (8.6,3.0);
\draw [line width=0.4pt,color=red] (8.6,3.0)-- (9.6,4.0);
\draw [line width=0.4pt,color=red] (8.6,3.0)-- (10.6,4.0);
\draw [line width=0.4pt,color=red] (8.6,3.0)-- (8.6,5.0);
\draw [line width=0.4pt,dash pattern=on 1pt off 1pt,color=red] (8.6,5.0)-- (8.6,6.0);
\draw [line width=0.4pt,dash pattern=on 1pt off 1pt,color=red] (8.6,5.0)-- (9.6,6.0);
\draw [line width=0.4pt,dash pattern=on 1pt off 1pt,color=red] (8.6,5.0)-- (10.4,6.0);
\draw [line width=0.4pt,dash pattern=on 1pt off 1pt,color=red] (8.0,1.0)-- (8.6,1.0);
\draw [line width=0.4pt,dash pattern=on 1pt off 1pt,color=red] (9.6,4.0)-- (10.0,4.4);
\draw [line width=0.4pt,dash pattern=on 1pt off 1pt,color=red] (10.6,4.0)-- (10.6,4.4);
\draw [line width=0.4pt,dash pattern=on 1pt off 1pt,color=red] (10.6,4.0)-- (11.4,4.4);
\draw (8.8,3.2) node[anchor=north west] {$\tilde{w}_{i+1}$};
\draw (8.8,1.2) node[anchor=north west] {$\tilde{w}_i$};
\draw (8.8,-1)  node[anchor=west] {$\tilde{w}_{i-1}$};
\draw [line width=0.4pt,dash pattern=on 1pt off 1pt,color=green,->] (8.6,1)-- (8.4,-0.8);
\draw [color=green] (8.3,-1) node {killed};
\draw [line width=0.4pt,color=red] (8.6,1.0)-- (9.6,2.0);
\draw [line width=0.4pt,dash pattern=on 1pt off 1pt,color=red] (9.6,2.0)-- (9.6,2.4);
\draw [line width=0.4pt,dash pattern=on 1pt off 1pt,color=red] (9.6,2.0)-- (10.0,2.4);
\draw [line width=0.4pt,color=red] (8.6,1.0)-- (10.6,2.0);
\draw [line width=0.4pt,color=red] (8.6,1.0)-- (11.6,2.0);
\draw [line width=0.4pt,dash pattern=on 1pt off 1pt,color=red] (11.6,2.0)-- (12.4,2.4);
\draw [line width=0.4pt,dash pattern=on 1pt off 1pt,color=red] (10.6,2.0)-- (11.0,2.4);
\draw [line width=0.4pt,color=red,-to] (8.6,1.0)-- (8.6,2.0);
\draw [line width=0.4pt,color=red,-to] (8.6,1.0)-- (9.3,1.7);
\draw [line width=0.4pt,color=red,-to] (8.6,1.0)-- (9.92,1.66);
\draw [line width=0.4pt,color=red,-to] (8.6,1.0)-- (10.01,1.47);
\draw [line width=0.4pt,color=red,-to] (8.6,3)-- (8.6,4.0);
\draw [line width=0.4pt,color=red,-to] (8.6,3)-- (9.3,3.7);
\draw [line width=0.4pt,color=red,-to] (8.6,3)-- (9.92,3.66);
\draw [line width=0.4pt,dotted,color=red,-to] (9.6,4.0)-- (9.9,4.3);
\draw [line width=0.4pt,dotted,color=red,-to] (10.6,4)-- (10.6,4.2);
\draw [line width=0.4pt,dotted,color=red,-to] (10.6,4)-- (11.0,4.2);
\draw [line width=0.4pt,dotted,color=red,-to] (8.6,5.0)-- (8.6,5.5);
\draw [line width=0.4pt,dotted,color=red,-to] (8.6,5.0)-- (9.1,5.5);
\draw [line width=0.4pt,dotted,color=red,-to] (8.6,5.0)-- (9.5,5.5);
\draw [line width=0.4pt,dotted,color=red,-to] (8.0,1.0)-- (8.3,1.0);
\begin{scriptsize}
\draw [fill=blue] (8.8,1.0) circle (1.5pt);
\draw [fill=blue] (8.8,3.0) circle (1.5pt);
\draw [fill=blue] (8.8,-1.0) circle (1.5pt);
\draw [fill=blue] (8.8,5.0) circle (1.5pt);
\draw [fill=blue] (11.8,2.0) circle (1.5pt);
\draw [fill=blue] (10.8,2.0) circle (1.5pt);
\draw [fill=blue] (9.8,2.0) circle (1.5pt);
\draw [fill=blue] (9.8,4.0) circle (1.5pt);
\draw [fill=blue] (10.8,4.0) circle (1.5pt);
\end{scriptsize}
\end{tikzpicture}
\caption{Possible paths for the walks $(X^{1,\tilde{w}_i}_n)_{n≥0}$ and $({X}^{2,\tilde{w}_i}_n)_{n≥0}$. }
\label{f:X1X2}
\end{figure}
Now for each $u\in\T$ we let
\begin{equation*}
\tyt_i^1(u):= \#\{n≥0\st X^{1,\tilde{w}_i}_n=\parent{u},X^{1,\tilde{w}_i}_{n+1}=u\}\qquad\text{and}\qquad \tyt_i^2(u):= \#\{n≥0\st {X}^{2,\tilde{w}_i}_n=\parent{u},{X}^{2,\tilde{w}_i}_{n+1}=u\}
\end{equation*}
be the edge local times on $u$ of the walks launched on $\tilde{w}_i$, and we let
\begin{equation*}
\tyt^1(u):=\sum_{i=1}^{\infty} \tyt_i^1(u)\qquad\text{and}\qquad \tyt^2(u):=\sum_{i=1}^{\infty} \tyt_i^2(u)
\end{equation*}
be the sum of the edge local times of all the walks. Finally, we let 
\begin{equation}\label{eq:defbetatilde}
\tyt(u):=\tyt^1(u)+\tyt^2(u)+\1{u\in (\tilde{w}_k)_{k≥0}}. 
\end{equation}
In other words, for every vertex $u\in\T$ not on the spine, $\tyt(u)$ corresponds to the sum of the edge local times of the walks launched on each vertex of the spine below; moreover, we add $1$ for each vertex of the spine. 
We let $\widetilde{\Tb}:=\{u\in\T,\tyt(u)>0\}$

\begin{proposition}\label{prop:lawbetatilde}
Under $\Ph_1$, $(\Tb,\beta,(w_k)_{k≥0})$ has same law as $(\widetilde{\Tb},\tyt,(\tilde{w}_k)_{k≥0})$ (with $(\T,(\tilde{w}_k)_{k≥0})$ built under $\Pbh$). 
\end{proposition} 
\begin{proof}
Notice that $(\widetilde{\Tb},\tyt,(\tilde{w}_k)_{k≥0})$ is a multitype tree with spine, and can be built by induction. Indeed, the same reasoning as that in the proof of~Lemma~\ref{lemma:lawbeta} yields the two following facts. 

\begin{itemize}
\item Let $u\in\widetilde{\Tb}$ be a vertex not on the spine. Conditionally on $\tyt(u)$, the progeny of $u$ has the law of a negative multinomial random variable of parameter $\big(\tyt(u),(1/(1+\ee{-V(v)}))_{v\in N^u}\big)$, (where $N^u\sim N$ is a random variable independent of everything else) independent of that of other vertices.
\item Let $i≥0$. Conditionally on $\tyt(\tilde{w}_{i})$, the progeny of $\tilde{w}_i$ has the law of a negative multinomial random variable of parameters $\tyt(\tilde{w}_i)+1$. This is because $\tyt(\tilde{w}_i)-1$ stands for the edge local time of the walks launched below $\tilde{w}_i$, to which we have to add the contribution of the two walks $(X^{1,\tilde{w}_i}_k)_{k≥0}$ and $({X}^{2,\tilde{w}_i}_k)_{k≥0}$) and $\big(1/(1+\ee{-V(v)})\big)_{v\in\widehat{N}^{\tilde{w}_i}}$ (where $\widehat{N}^{\tilde{w}_i}\sim \widehat{N}$ is independent of everything else). After choosing $\tilde{w}_{i+1}$ proportionally to $\widehat{N}^{\tilde{w}_i}$, we add $1$ to its type. 
\end{itemize} 
Since the offspring distribution outside the spine is the same as that of $(\Tb,\beta)$ under $\Ph$ (edge local times of a random walk on a non-biased tree), we only need to focus on the offspring distribution of vertices of the spine. We let $\mathcal{U}:=\bigsqcup_{n≥0}\N^{(n)}$ be the set of all potential vertices (following Neveu's notation~\cite{neveu}). For every $(s_u)_{u\in\mathcal{U}}\in [0;1]^{\mathcal{U}}$ and $t\in [0;1]$, we have
\begin{align*}
\Eh_i\Big[ t^{\beta(w_1)} \prod_{|u|=1,u\neq w_1}s_u^{\beta(u)}\Big]&=\frac{1}{i}\E_i\Big[\sum_{|v|=1} \beta(v) t^{\beta(v)}\prod_{|u|=1,u\neq v}s_u^{\beta(u)} \Big]\\
&=\frac{1}{i}\Eb\Big[\sum_{|v|=1}t\frac{\d}{\d t}\Big(\E_i^\V\Big[ t^{\beta(v)}\prod_{|u|=1,u\neq v}s_u^{\beta(u)}\Big]\Big) \Big].  
\end{align*}
The first equality is due to the multitype many-to-one lemma (Lemma~\ref{lemma:many-to-one-multitype}). With a slight abuse of notation, we wrote $\P_i^\V$ for the quenched law of $\beta$ after $i$ excursions from the root. 

Now according to Lemma~\ref{lemma:lawbeta}, $(\beta(u))_{|u|=1}$ is a negative multinomial random variable of parameter $\big(i,(p_{\root,u})_{|u|=1}\big)$ under $\P_i^\V$. Therefore its generating function under $\P_i^\V$ is $(\frac{p_{\root,\parent{\root}}}{1-\sum_{|u|=1}p_{\root,u} s_u})^i$. Using this expression with $t$ instead of $s_v$, we get
\begin{align*}
\Eh_i\Big[ t^{\beta(w_1)} \prod_{|u|=1,u\neq w_1}s_u^{\beta(u)}\Big]&=\frac{1}{i}\Eb\Big[\sum_{|v|=1}t\frac{\d}{\d t}\Big(\frac{p_{\root,\parent{\root}}}{1-p_{\root,v}t-\sum_{\substack{|u|=1 \\ u\neq v}}p_{\root,u} s_u}\Big)^i\Big]\\
&=\Eb\Big[\sum_{|v|=1}\ee{-V(v)}t\Big(\frac{p_{\root,\parent{\root}}}{1-p_{\root,v}t-\sum_{\substack{|u|=1 \\ u\neq v}}p_{\root,u} s_u}\Big)^{i+1}\Big]\\
&=\Ebh\Big[t\Big(\frac{p_{\root,\parent{\root}}}{1-p_{\root,w_1}t-\sum_{\substack{|u|=1 \\ u\neq w_1}}p_{\root,u} s_u}\Big)^{i+1}\Big]. 
\end{align*}
We used the fact that $p_{\root,v}=\ee{-V(v)}p_{\root,\parent{\root}}$ in the last but one equality, and the many-to-one lemma (Lemma~\ref{lemma:many-to-one-environnement}) in the last equality. Now this last generating function is precisely the annealed version (with respect to $\Pbh$) of a negative multinomial random variable of parameter $(i+1,(p_{\root,u})_{|u|=1})$, to which $1$ was added on the vertex $\tilde{w}_1$. But since the type of $\tilde{w}_0$ was $i$, it means that it was reached $i-1$ times by the walks below (since $1$ was added), so two random walks were added, exactly as described above. 
\end{proof}

The idea of Proposition~5 sums up in the following diagram. \\

\tikzstyle{block} = [rectangle, draw, text centered, minimum height=2em]
\tikzstyle{line} = [draw, -latex', thick]
\tikzstyle{line2} = [draw, latex'-latex',thick]
\begin{figure}[H]
\begin{tikzpicture}[node distance=1cm, auto]
\node (init) {};
\node [block] (A) {$(\T,(V(u))_{u\in\T})$ under $\Pb$};
\node [block, right=9.5cm of A] (B) {$(\Tb,\beta)$ under $\P$};
\node [block,align=left, below=4cm of A] (C) {$(\T,(V(u))_{u\in\T},(\tilde{w}_k)_{k≥0})$ \\ under $\Pbh$};
\node [block, below=4cm of B] (D) {$(\Tb,\beta,(w_n)_{n\geq 0})$ under $\Ph$};
\begin{small}
\path [line] (A) -- node [midway,below ] {\eqref{eq:probatrans} and~\eqref{eq:defbeta}} node [midway,above,align=left] {Run an excursion of $(X_n)_{n≥0}$, and keep the trace \\ and the local times} (B);
\path [line] (B) -- node [midway,left,text width=3.5cm,align=left ] {Change of measure w.r.t. $Z_n$\\ (Subsection~\ref{subsec:changemeasure})} (D);
\path [line] (A) -- node [midway,right,text width=3.5cm,align=left ] {Change of measure w.r.t $W_k$\\ (Subsection~\ref{subsec:lawtb})} (C);
\path [line] (C) -- node [midway,above,align=left ] {Trace and local times~\eqref{eq:defbetatilde} of the random \\ walks $(X_n^{1,w_i})_n$ and $(X_n^{2,w_i})_n$} (D);
\end{small}
\end{tikzpicture}
\end{figure}

From now on we will therefore consider $(\Tb,\beta,(w_k)_{k≥0})$ under $\Ph_1$ as the trace of all the random walks on $(\V,(\tilde{w}_k)_{k≥0})$ under $\Pbh$, and $\beta$ will also stand for their edge local times $\widetilde{\beta}$.

\section{Proof of Proposition~\ref{prop:regvar}}\label{sec:technique}
Before tackling the proof of Proposition~\ref{prop:regvar}, that is ensuring that $\Fb^R$ and $\Fb^X$ satisfy $\bf (H_{\ell})$, it is very important to notice that by construction:
\begin{itemize}
\item For both $\Fb^R$ and $\Fb^X$, the law of the offspring distribution of vertices of type~$1$, denoted by $\nu^1$ in $(\bf H_\ell)$, has the law of $L^1$ (the cardinal of the optional line $\cal{L}^1$, introduced in Definition~\ref{def:optline}) under $\P_1$.
\item[$\bullet $] The law of the total offspring distribution of $\Fb^R$ (vertices of type $0$ and $1$ denoted by $\nu$ in $\bf (H_{\ell})$), has the law of $B^1$ (the cardinal of the optional line $\cal{B}^1$, introduced in Definition~\ref{def:optline}) under $\P_1$.
\item[$\bullet $] The law of the total offspring distribution of $\Fb^X$ (vertices of type $0$ and $1$) has the law of $\sum_{u\in\B^1}2\beta(u)$ (the total time spent by the walk in $\cal{B}^1$ in one excursion) under $\P_1$.
\end{itemize}
Therefore, proving that hypothesis ${\bf (H_{\ell})}(i)$ is satisfied boils down to proving that $\E_1[L^1]=1$, which will be done in Lemma~\ref{lemma:Hl1}. Then, to ensure that hypothesis ${\bf (H_{\ell})}(ii)$ is satisfied it is enough to prove that there exists an $\eps>0$ such that $\E_1[(\sum_{u\in\cal{B}^1}\beta(u))^{1+\eps}]<\infty$, (we would also have to prove that $\E_1[(B^1)^{1+\eps}]<\infty$ but as $\sum_{u\in\cal{B}^1}\beta(u)≥B^1$ this will be automatic), which will be done in Lemma~\ref{lemma:momB1}. 

Still by construction, notice that to each vertex $u'$ of the first generation of $\Fb^R$ corresponds a vertex $u$ of $\cal{B}^1$, and that $\ell(u')=|u|$. Moreover, the set of vertices of the first generation of $\Fb^X$ matches that of the first generation of $\Fb^R$ after having replicated each vertex $u$ a number $2\beta(u)-1$ of times. Therefore, denoting by $\nu^R$ (resp.\ $\nu^X$) the law of the first generation of $\Fb^R$ (resp.\ $\Fb^X$), we have for any $r>1$, 
\begin{equation}\label{eq:momexpnuRnuX}
\E\big[\sum_{u\in\nu^R}r^{\ell(u)}\big]=\E_1\big[\sum_{u\in\cal{B}^1}r^{|u|}\big]≤\E_1\big[\sum_{u\in\cal{B}^1}2\beta(u)r^{|u|}\big]=\E\big[\sum_{u\in\nu^X}r^{\ell(u)}\big]. 
\end{equation}
Thus, it will be enough to show that there exists an $r>1$ such that $\E_1\big[\sum_{u\in\cal{B}^1}\beta(u)r^{|u|}\big]<\infty$ to prove that hypothesis ${\bf (H_{\ell})}(iv)$ is satisfied, and this is what we will do with Lemma~\ref{lemma:lyapunov}. 

We will also compute the constants given in Proposition~\ref{prop:regvar} in Lemma~\ref{lemma:constants} \\

Finally, hypothesis ${\bf (H_{\ell})}(iii)$ will be satisfied if we are able to show that there exists a positive constant $K^\star$ such that $\P_1\big(L^1>x\big)\sim K^\star x^{-\kappa}$. This will be the subject of the whole Subsection~\ref{subsec:cvqueue1}, and it will be finally proved in Proposition~\ref{prop:xkappa}. This is actually the main contribution of this paper when compared to~\cite{aidekon-raphelis}. Indeed, in the latter, we only had to show the finiteness of the second moment of $\nu^1$, and this could be obtained by quite straightforward backbone decomposition techniques; whereas here proving the regular variation of the tails of $\nu^1$ will appear to be quite technical, and will require the fine understanding of $\cal{L}^1$. 

\subsection{Hypotheses ${\bf (H_{\ell})}$ $(i)$, $(ii)$ and $(iv)$}\label{subsec:Hl}

Now recall that we assume that hypotheses $\bf (H_c)$ and $\bf (H_\kappa)$ are satisfied for a certain $\kappa\in(1;2]$. Let 
\begin{equation*}
\tauh_1:=\min\{k≥1\st \beta(w_k)=1\}
\end{equation*}
be the first non-null hitting time of $1$ by the Markov chain $(\beta(w_k))_{k≥0}$ . 
\begin{lemma}\label{lemma:Hl1}
For any $i≥1$, we have, 
\begin{equation}\label{eq:majLj}
\E_i[L^1]=i. 
\end{equation}
In particular, $\E_1[L^1]=1$, and the reproduction law of both forests $\Fb^R$ and $\Fb^X$ satisfies condition ${\bf (H_{\ell})}(i)$.  
\end{lemma}
\begin{proof}
For any $i≥1$, we have
\begin{align*}
\E_i\Big[L^1\Big]&=\E_i\Big[\sum_{u\in\cal{L}^1}1\Big]=\E_i\Big[\sum_{u\in\cal{L}^1}\beta(u)\Big]\nonumber\\
&=\sum_{k≥1}\E_i\Big[\sum_{|u|=1}\beta(u)\1{\beta(u_1),\dots,\beta(u_{k-1})\neq 1\text{ and }\beta(u)=1}\Big]\nonumber\\
&=\sum_{k≥1}i\Eh_i\Big[\1{\beta(w_1),\dots,\beta(w_{k-1})\neq 1\text{ and }\beta(w_k)=1}\Big]=i\sum_{k≥1}\Eh_1\Big[\1{\tauh_1=k}\Big]=i, 
\end{align*}
the Markov chain $(\beta(w_k))_{k≥0}$ being recurrent, and where we used the multitype many-to-one lemma (Lemma~\ref{lemma:many-to-one-multitype}) between the last two lines. The case $i=1$ reads $\E_1[L^1]=1$, which proves that ${\bf (H_{\ell})} (i)$ is satisfied by $\Fb^R$ and $\Fb^X$.
\end{proof}

Let us prove now a Lemma that allows us to control the Markov chain $(\beta(w_k))_{k≥0}$ up to a hitting time, and which will yield that $\Fb^R$ and $\Fb^X$ also satisfy hypothesis ${\bf (H_{\ell})} (iv)$. 
\begin{lemma}\label{lemma:lyapunov}
For every $\alpha\in(0;\kappa-1)$, there exists $C_{\alpha}>0$ such that for any $r\in(1;\psi(1+\alpha)^{-1})$ and $i≥1$, 
\begin{equation}\label{eq:majtaualpha}
\Eh_i\Big[ \sum_{k=1}^{\tauh_1} (\beta(w_k))^\alpha r^k \Big]≤C_{\alpha} i^{\alpha}. 
\end{equation}
As consequences, first there exists a constant $C_1>0$ such that for any $p>0$
\begin{equation}\label{eq:taulog}
\Eh_i[{\tauh_1}^p]\leq C_1\ln^p(1+i),  
\end{equation}
and second, the laws of $\Fb^R$ and $\Fb^X$ satisfy hypothesis ${\bf (H_\ell)} (iv)$. 
\end{lemma}

\begin{proof}
Let $\alpha\in(0;\kappa-1)$, and let us set for every $i≥1$, $F(i):=\frac{\Gamma(i+1+\alpha)}{\Gamma(i+1)}$. As proved in the appendix~(Lemma~\ref{lemma:applyapunov}), there exists $d\in (0;1)$ such that for any $i>i_0$ large enough, 
\begin{equation*}
\sum_{j≥1} \ph_{i,j} F(j)≤ d F(i), 
\end{equation*}
that is $F$ is a \textit{Lyapounov function} satisfying condition $(V4)$ introduced p.371 of~\cite{meyn-tweedie}. Thus, Theorem~15.3.3 of~\cite{meyn-tweedie} ensures that there exists a constant $C_F>0$ (depending on $d$, and then on $F$) such that for any $r\in(1;d^{-1})$ and for any $i≥1$, 
\begin{equation}\label{eq:majSigmatauV}
\Eh_i\Big[ \sum_{k=1}^{\tauh_1} F(\beta(w_k))r^k \Big]≤C_F F(i). 
\end{equation}
The function $F$ being greater than $1$, and equivalent to $i^{\alpha}$ as $i\to\infty$, (\ref{eq:majSigmatauV}) yields~(\ref{eq:majtaualpha}). 

Now the first consequence is just Jensen's inequality applied to~\eqref{eq:majtaualpha} with $x\mapsto\ln^p(1+x)$. 

Let us deal with the second consequence. We want to show that there exists $r>1$ such that $\E_1[\sum_{u\in\cal{B}_1}\beta(u)r^{|u|}]<\infty$.  Let $r\in(1;d^{-1})$, using the many-to-one lemma (Lemma~\ref{lemma:many-to-one-multitype}), we get 
\begin{align*}
\E[\sum_{u\in\nu^R}r^{\ell(u)}]≤\E[\sum_{u\in\nu^X}r^{\ell(u)}]=\E_1\Big[\sum_{u\in\cal{B}^1}2\beta(u)r^{|u|}\Big]&=\sum_{k≥1}\E_1\Big[\sum_{|u|=k}\beta(u)\1{\beta(u_1),\dots,\beta(u_{k-1})\neq 1}r^k\Big]\\
&=\sum_{k≥1}\Eh_1\big[\1{\beta(w_1),\dots,\beta(w_{k-1})\neq 1}r^k\big]=\Eh_1\Big[\sum_{k=1}^{\tauh_1}r^k\Big], 
\end{align*}
and~\eqref{eq:majtaualpha} ensures the finiteness of this last quantity. Hence by~\eqref{eq:momexpnuRnuX}, the laws of $\Fb^R$ and $\Fb^X$ satisfy hypothesis~${\bf (H_\ell)} (iv)$. 
\end{proof}

Let us now focus on hypothesis ${\bf (H_{\ell})} (ii)$. We are then interested in moments of order $1+\eps$ of $\sum_{u\in\cal{B}^1}\beta(u)$ under $\P_1$. For convenience, let us denote this quantity by $\widetilde{B}^1$. We have the following lemma: 
\begin{lemma}\label{lemma:momB1}
For any $\alpha\in(0;\kappa-1)$, $\eps>0$, there exists a constant $C'_{\alpha+\eps}\in(0;\infty)$ such that for any $i≥1$, 
\begin{equation*}
\E_i[(\widetilde{B}^1)^{1+\alpha}]≤C'_{\alpha+\eps}i^{1+\alpha+\eps}. 
\end{equation*}
As a consequence, the laws of $\Fb^R$ and $\Fb^X$ satisfy hypothesis ${\bf (H_{\ell})} (ii)$.
\end{lemma}
\begin{proof}

 Let us start by computing some estimates on the first moment of $\widetilde{B}^1$ under $\P_i$ for $i≥1$: 
\begin{align*}
 \E_i\Big[ {\widetilde{B}^1} \Big]&= \E_i\Big[ \sum_{u\in\Tb\backslash\{\root\}} \beta(u)\1{\beta(u_1),\beta(u_2),\dots,\beta(\parent{u})\neq 1} \Big] \\
&= \sum_{k≥1}\E_i\Big[ \sum_{|u|=k} \beta(u) \1{\beta(u_1),\dots,\beta(u_{k-1})\neq 1} \Big]\\
&= i\sum_{k≥1}\Eh_i\Big[ \1{\beta(w_1),\dots,\beta(w_{k-1})\neq 1} \Big]=i\Eh_i[\tauh_1], 
\end{align*}
where we used the many-to-one lemma (Lemma~\ref{lemma:many-to-one-multitype}). Now, equation~(\ref{eq:taulog}) yields 
\begin{equation}\label{eq:majEiBj}
\E_i\Big[ {\widetilde{B}^1} \Big]≤C_1 i\ln(1+i). 
\end{equation}
Let us now compute the $1+\alpha$\up{th} moment of $\widetilde{B}^1$ under $\P_i$. Discussing on the generation to which vertices of $\B^1$ belong, we get 
\begin{equation*}
\E_i[(\widetilde{B}^1)^{1+\alpha}]=\sum_{k≥1}\E_i\Big[\Big(\sum_{|u|=k} \beta(u)\1{u\in{\B^1}}\Big)\times(\widetilde{B}^1)^{\alpha}\Big]. 
\end{equation*}
For $k≥1$, let us focus on the general term of the sum. When conditioning on $\cal{F}_k$, it can be written as
\begin{equation*}
\E_i\Big[\Big(\sum_{|u|=k}\beta(u)\1{u\in{\B^1}}\Big)(\widetilde{B}^1)^{\alpha}\Big]=\E_i\Big[\Big(\sum_{|u|=k}\beta(u)\1{u\in{\B^1}}\Big)\E_i\big[(\widetilde{B}^1)^{\alpha} \mid \cal{F}_k \big]\Big]. 
\end{equation*}
Let us apply the many-to-one lemma (Lemma~\ref{lemma:many-to-one-multitype}) at generation $k$ to this expectation, with the setting $X_k=\E_i\Big[(\widetilde{B}^1)^\alpha \mid \cal{F}_k \Big]$ (which is $\cal{F}_k$-measurable); we get
\begin{equation}\label{eq:majbalphaEh}
\E_i\Big[\Big(\sum_{|u|=k}\beta(u)\1{u\in{\B^1}}\Big)(\widetilde{B}^1)^{\alpha}\Big]=i\Eh_i\Big[ \1{k≤\tauh_1} \E_i\big[(\widetilde{B}^1)^\alpha \mid \cal{F}_k \big] \Big]. 
\end{equation}
Decomposing $\B^1$ according to generations we have for any $k≤\tauh_1$,
\begin{equation*}
\widetilde{B}^1=\sum_{|u|<k}\beta(u)\1{u\in \cal{B}^1}+\sum_{|u|=k,u\neq w_k}\1{u\in\cal{B}^1}(\beta(u)+\widetilde{B}^1_u)+\beta(w_k)+\widetilde{B}^1_{(w_k)}. 
\end{equation*}
where for $u\in\Tb$ we denoted by $\widetilde{B}^1_u$ the quantity $\sum_{v\in \cal{B}^1_u}\beta(v)$, that we choose to be equal to $0$ if $\beta(u)=1$. The fact that $\alpha<1$ and equation~(\ref{eq:majEiBj}) yield for any $k≤\tauh_1$, 
\begin{align}\label{eq:B1toL1}
\E_i&\big[(\widetilde{B}^1)^\alpha\mid\cal{F}_k\big]≤\E_i\Big[\Big(\sum_{|u|<k}\beta(u)\1{u\in \cal{B}^1}+\sum_{|u|=k,u\neq w_k}\1{u\in\cal{B}^1}(\beta(u)+\widetilde{B}^1_u)+\beta(\beta(w_k))\Big)^\alpha\Big]+\E\big[\widetilde{B}^1_{\beta(w_k)}\mid \cal{F}_k\big]^\alpha \nonumber\\
&≤\E_i\Big[\Big(\sum_{|u|<k}\beta(u)\1{u\in \cal{B}^1}+\sum_{|u|=k,u\neq w_k}\1{u\in\cal{B}^1}(\beta(u)+\widetilde{B}^1_u)+\beta(w_k)\Big)^\alpha\Big]+\big(C_1\beta(w_k)\ln(1+\beta(w_k))\big)^\alpha.
\end{align}
Now notice that according to the construction of $\Tb$ in Subsection~\ref{subsec:multitype}, for $u\neq w_k$ the $\widetilde{B}^1_u$ have same law under $\Ph_i$ and $\P_i$. The previous equation can therefore be written as
\begin{align*}
\E_i\big[(\widetilde{B}^1)^\alpha\mid\cal{F}_k\big]&≤\Eh_i\Big[\Big(\sum_{|u|<k}\beta(u)\1{u\in \cal{B}^1}+\sum_{|u|=k,u\neq w_k}\1{u\in\cal{B}^1}(\beta(u)+\widetilde{B}^1_u)+\beta(\beta(w_k))\Big)^\alpha\Big]\\
&\phantom{≤}+\big(C_1\beta(w_k)\ln(1+\beta(w_k))\big)^\alpha \\
&≤\Eh_i\big[(\widetilde{B}^1)^{\alpha}\mid \cal{F}_k\big]+\big(C_1\beta(w_k)\ln(1+\beta(w_k))\big)^\alpha
\end{align*}

Plugging this in~(\ref{eq:majbalphaEh}) and summing over $k≥1$, we finally get a more convenient upper bound for the $(1+\alpha)$\up{th} moment: 
\begin{align}\label{eq:EiNjalpha}
\E_i\big[(\widetilde{B}^1)^{1+\alpha}\big]&≤i\Eh_i\Big[\sum_{k=1}^{\tauh_1}\Eh_i[(\widetilde{B}^1)^\alpha]\Big]+i\Eh_i\Big[\sum_{k=1}^{\tauh_1}\big(C_1\beta(w_k)\ln(1+\beta(w_k))\big)^\alpha\Big]\nonumber\\
&=
i\Eh_i\big[\tauh_1(\widetilde{B}^1)^{\alpha}\big]+i\Eh_i\Big[\sum_{k=1}^{\tauh_1}\big(C_1\beta(w_k)\ln(1+\beta(w_k))\big)^\alpha\Big]\nonumber\\
&≤i\Eh_i\big[\tauh_1(\widetilde{B}^1)^{\alpha}\big]+C_1^{\alpha}C_{\alpha+\eps} i^{1+\alpha+\eps}, 
\end{align}
where the last inequality was obtained thanks to~(\ref{eq:majtaualpha}) for any $\eps>0$. We therefore only have to bound the first quantity to prove the lemma. Now we get, thanks to Hölder's inequality, for any $\eps'>0$ (we choose $\eps'$ such that $\alpha':=\alpha(1+\eps')<\kappa-1$)
\begin{equation}\label{eq:B1holder}
i\Eh_i[\tauh_1(\widetilde{B}^1)^{\alpha}]≤i\Eh_i[\tauh_1^{1+1/\eps'}]^{\frac{\eps'}{1+\eps'}}\Eh_i[(\widetilde{B}^1)^{\alpha(1+\eps')}]^{\frac{1}{1+\eps'}}≤C_1 i\ln(1+i)\Eh_i[(\widetilde{B}^1)^{\alpha(1+\eps')}]^{\frac{1}{1+\eps'}}, 
\end{equation}
where we used equation~(\ref{eq:taulog}) in the last inequality. Now, computing this last quantity will require a more subtle decomposition . Under the biased law $\Ph$, $\B^1$ is made up of the spine below $w_{\tauh_1}$, together with the sets $\B_u^1$ for any sibling $u$ of the spine below $w_{\tauh_1}$ such that $\beta(u)\neq i$. That is, denoting by $\Omega(w_k)$ the set of siblings of $w_k$, and still for $u\in\Tb$ by $\widetilde{B}^1_u$ the quantity $\sum_{v\in \cal{B}^1_u}\beta(v)$ (that we choose to be equal to $0$ if $\beta(u)=1$),
\begin{align*}
\Eh_i[(\widetilde{B}^1)^{\alpha'}]&=\Eh_i\Big[\Big( \sum_{k=1}^{\tauh_1} \beta(w_k) + \sum_{k=1}^{\tauh_1}\sum_{u\in\Omega(w_k)}(\beta(u)+\widetilde{B}_u^1) \Big)^{\alpha'} \Big]\nonumber\\
&≤\Eh_i\Big[\sum_{k=1}^{\tauh_1} (\beta(w_k))^{\alpha'}\Big]+ \Eh_i\Big[\sum_{k=1}^{\tauh_1}\Big(\sum_{u\in\Omega(w_k)}(\beta(u)+\widetilde{B}_u^1)\Big)^{\alpha'}\Big]\nonumber\\
&≤C_{\alpha'}i^{\alpha'}+ \Eh_i\Big[\sum_{k=1}^{\tauh_1}\Big(\sum_{u\in\Omega(w_k)}\Eh\big[\beta(u)+\widetilde{B}_u^1\big|\sigma\big(\beta(v),v\in\bigcup_{1≤k≤\tauh_1}\Omega(w_k)\big)\big]\Big)^{\alpha'}\Big]
\end{align*}
after this decomposition along the spine (we used~(\ref{eq:majtaualpha}) and Jensen's inequality in the last inequality). Now~(\ref{eq:majEiBj}) yields that 
\begin{equation}\label{eq:EiNjalpha1}
\Eh_i[(\widetilde{B}^1)^{\alpha'}]≤C_{\alpha'}i^{\alpha'}+ \Eh_i\Big[\sum_{k=1}^{\tauh_1}\Big(\sum_{u\in\Omega(w_k)}\underbrace{\beta(u)+C_1\beta(u)\ln(1+\beta(u))}_{≤C_1'\beta(u)\ln(1+\beta(u))}\Big)^{\alpha'}\Big], 
\end{equation}
where $C_1'$ is a suitable constant. Conditioning with respect to $\sigma((w_k)_{k\in\brint{0;\tauh_1-1}})$, we get 
\begin{align}\label{eq:EiNjalpha2}
\Eh_i\Big[\sum_{k=1}^{\tauh_1}\Big(\sum_{u\in\Omega(w_k)}{\beta(u)\ln(1+\beta(u))}\Big)^{\alpha'} \Big]&≤\Eh_i\Big[ \sum_{k=0}^{\tauh_1-1} \Eh_{\beta(w_k)}\Big[ \Big(\sum_{|u|=1} \beta(u)\ln(1+\beta(u))\Big)^{\alpha'} \Big] \Big]\nonumber\\
&=\Eh_i\Big[ \sum_{k=0}^{\tauh_1-1} \frac{1}{\beta(w_k)}\E_{\beta(w_k)}\Big[\Big(\sum_{|u|=1}\beta(u)\Big) \Big(\sum_{|u|=1} \beta(u)\ln(1+\beta(u))\Big)^{\alpha'} \Big] \Big]\nonumber\\
&≤\Eh_i\Big[ \sum_{k=0}^{\tauh_1-1}\frac{1}{\beta(w_k)} \E_{\beta(w_k)}\Big[\Big(\sum_{|u|=1} \beta(u)\Big)^{1+\alpha'(1+\eps')} \Big] \Big],
\end{align}
where we used the branching property on each $w_k$ for $0≤k≤\tauh_1-1$ and where $\eps'>0$ can be chosen as small as we want. We let $\alpha''=\alpha(1+\eps')$.Recall from~\eqref{eq:defGk} that $(\cal{G}_k)_{k≥0}$ is the filtration generated by the environment. Recall from the second point of Lemma~\ref{lemma:lawbetaconseq} that under $\P_{\beta(w_k)}$, $\sum_{|u|=1}\beta(u)$ is the sum of $\beta(w_k)$ independent geometric random variables of parameter $\frac{1}{1+\sum_{|u|=1}\ee{-V(u)}})$; Lemma~\ref{lemma:NegBin} (given in the appendix) yields
\begin{align*}
\frac{1}{\beta(w_k)}\E_{\beta(w_k)}&\Big[\Big(\sum_{|u|=1} \beta(u)\Big)^{1+\alpha'(1+\eps')} \Big] =\frac{1}{\beta(w_k)}\Eb\Big[\E_{\beta(w_k)}\Big[\Big(\sum_{|u|=1} \beta(u)\Big)^{1+\alpha''} \Big| \cal{G}_1\Big] \Big]\\
&≤\underbrace{16\Big(\Eb\Big[\sum_{|u|=1}\ee{-V(u)}\Big]+\Eb\Big[\Big(\sum_{|u|=1}\ee{-V(u)}\Big)^{1+\alpha''}\Big]\Big)}_{=:K^1_{\alpha''}}+(\beta(w_k))^{1+\alpha''}\underbrace{2\Eb\Big[\Big(\sum_{|u|=1}\ee{-V(u)}\Big)^{\alpha''}\Big]}_{=:K^2_{\alpha''}}. 
\end{align*}
Hypothesis $\bf (H_\kappa)$ ensures the finiteness of $K^1_{\alpha''}$ and $K^2_{\alpha''}$. Plugging this back in equation~(\ref{eq:EiNjalpha2}) yields 
\begin{equation*}
\Eh_i\Big[\sum_{k=1}^{\tauh_1}\Big(\sum_{u\in\Omega(w_k)}{\beta(u)\ln(1+\beta(u))}\Big)^{\alpha'} \Big]≤\Eh_i\Big[\sum_{k=1}^{\tauh_1}K^1_{\alpha''}+K^2_{\alpha''}(\beta(w_k))^{\alpha''}\Big]
\end{equation*} 
Now, together with inequality~(\ref{eq:majtaualpha}) this yields
\begin{equation*}
\Eh_i\Big[\sum_{k=1}^{\tauh_1}\Big(\sum_{u\in\Omega(w_k)}{\beta(u)\ln(1+\beta(u))}\Big)^{\alpha'} \Big]≤(K^1_{\alpha''}+K^2_{\alpha''})C_{\alpha''}i^{\alpha''}, 
\end{equation*}
where we recall that $\alpha''$ can be chosen as close to $\alpha'$ as we want, and $\alpha'$ as close to $\alpha$ as we want. Plugging this into~(\ref{eq:EiNjalpha1}) and then in~(\ref{eq:EiNjalpha}) (via~(\ref{eq:B1holder})), we get that for any $\alpha'''>\alpha$ as close to $\alpha$ as we want, 
\begin{equation*}
\E_i\big[(\widetilde{B}^1)^{1+\alpha}\big]≤C'_{\alpha'''}i^{1+\alpha'''}
\end{equation*}
where $C'_{\alpha'''}$ is a suitable constant. 
\end{proof}

To conclude this subsection and before proving that ${\bf (H_{\ell})}(iii)$ is satisfied, let us compute the constants $\mu_R$, $m_R$, $\mu_X$ and $m_X$ introduced in Proposition~\ref{prop:regvar}. 
\begin{lemma}\label{lemma:constants}
The constants $m$ and $\mu$ associated with forests $\Fb^R$ and $\Fb^X$ are
\begin{align*}
&\mu_R=\mu:=(a_1 b_1)^{-1}\quad\text{and}\quad m_R=a_1^{-1}\quad\text{for $\Fb^R$},\\
&\mu_X=\mu=(a_1 b_1)^{-1}\quad\text{and}\quad m_X=2(a_1 b_1)^{-1}\quad\text{for $\Fb^X$}, 
\end{align*}
\end{lemma}
\begin{proof}
Recall from Proposition~\ref{prop:markovtype} that $(\beta(w_k))_{k≥0}$ is a Markov chain on $\N^*$ with invariant measure $(\pi_i)_{i≥1}$ given in~\eqref{eq:defpi}. According to the remark at the beginning of Section~\ref{sec:technique}, we have
\begin{equation*}
\mu_X=\mu_R=\E_1\big[ \sum_{u\in\cal{L}^1} |u| \big]=\Eh_1\big[ |w_{\tauh_1}| \big]=\Eh_1[\tauh_1]=1/\pi_1=(a_1 b_1)^{-1}, 
\end{equation*}
where we used the many-to-one lemma (Lemma~\ref{lemma:many-to-one-multitype}) in the third equality. We also have still using the remark at the beginning of Section~\ref{sec:technique} and the many-to-one lemma, 
\begin{equation*}
m_R=\E_1\big[ \sum_{u\in\cal{B}^1} 1 \big]=\Eh_1\big[\sum_{k=1}^{\tauh_1}\frac{1}{\beta(w_k)}]=\sum_{i≥1}\frac{1}{i}\frac{\pi_i}{\pi_1}=(a_1)^{-1}
\end{equation*}
and
\begin{equation*}
m_X=\E_1\big[ \sum_{u\in\cal{B}^1} 2\beta(u) \big]=2\Eh_1\big[\sum_{k=1}^{\tauh_1}1]=2\sum_{i≥1}\frac{\pi_i}{\pi_1}=2(a_1 b_1)^{-1}. 
\end{equation*}
\end{proof}

\subsection{Hypothesis ${\bf (H_{\ell})} (iii)$}\label{subsec:cvqueue1}
In this section, we intend to show that hypothesis ${\bf (H_{\ell})}(iii)$ is satisfied by $\Fb^R$ and $\Fb^X$, \ie according to the beginning of Section~\ref{sec:technique}, that there exists a constant $K^\star>0$ such that 
\begin{equation}\label{eq:queueL1}
\P_1(L^1>x)\ssim{x\to\infty} K^\star x^{-\kappa}. 
\end{equation}
Actually, establishing the regular variation of the tail of $L^1$ under $\P_1$ is equivalent to establishing that of $L^1$ under $\Ph_1$. 
\begin{lemma}\label{lemma:P1L1eqPh1L1}
There exists a constant $K^\star\in(0;\infty)$ such that as $x\to\infty$, $
\P_1(L^1>x)\sim K^\star x^{-\kappa}$ if and only if as $x\to\infty$, 
\begin{equation}\label{eq:equivPhL1>n}
\Ph_1(L^1>x)\sim \frac{\kappa}{\kappa-1}K^\star x^{-(\kappa-1)}. 
\end{equation}
\end{lemma}
\begin{proof}
According to Theorem~8.1.4 of~\cite{bingham-goldie-teugels}, equation~\eqref{eq:queueL1} is equivalent to 
\begin{equation*}
\E_1[L^1\1{L^1>x}]\ssim{x\to\infty} \frac{\kappa }{\kappa-1}K^\star x^{-(\kappa-1)}. 
\end{equation*}
The many-to-one lemma (Lemma~\ref{lemma:many-to-one-multitype}) yields
\begin{align*}
\E_1\Big[L^1\1{L^1>x}\Big]&=\sum_{k≥1}\E_1\Big[\sum_{|u|=k}\1{u\in\L^1}\1{L^1>x}\Big]\\
&=\sum_{k≥1}\Eh_1\Big[\1{k=\tauh_1}\P_1\big(L^1>x\mid \cal{F}_k\big)\Big]\\
&=\sum_{k≥1}\Eh_1\Big[\1{k=\tauh_1}\Ph_1\big(L^1>x\mid \cal{F}_k\big)\Big]=\Ph_1\Big(L^1>x\Big), 
\end{align*}
where between the last two lines we used the fact that on the event $\{k=\tauh_1\}$ we have $\P_1\big(L^1>x\mid \cal{F}_k\big)=\Ph_1\big(L^1>x\mid \cal{F}_k\big)$. The last equality comes from the fact that $(\beta(w_k))_{k≥0}$ is recurrent. This concludes the proof.
\end{proof} 
This lemma motivates us to understand the behaviour of $L^1$ under $\Ph$. To this end, let us describe the behaviour of $L^1$ after a large number of excursions from the root. 

\subsubsection{Behaviour of $L^1$ with large initial local time}
Let us begin with a result allowing us to control the small moments of $L^1$. 
\begin{lemma}
Let $\alpha\in(0,\kappa-1)$, and $i≥1$. There exists a constant $C_\alpha\in(0,\infty)$ such that
\begin{equation}\label{eq:majLjalpha}
\E_i[(L^1)^{1+\alpha}]≤C_\alpha i^{1+\alpha}. 
\end{equation}
\end{lemma}
\begin{proof}
The proof of Lemma~\ref{lemma:momB1} on the small moments of $\widetilde{B}^1$ can easily be adjusted to $L^1$, just by following its lines, to get this sharper estimate. Indeed, there is not any need to shift from $\alpha$ to $\alpha'$, since we get $\E_i\Big[\Big(\sum_{|u|=k}\1{u\in{\L^1}}\Big){(L^1)}^{\alpha}\Big]=i\Eh_i\Big[ \1{\tauh_1=k}(L^1)^\alpha\Big]$ in~(\ref{eq:majbalphaEh}) and so $\E_i[(L^1)^{1+\alpha}]≤i\Eh_i[(L^1)^{\alpha}]$ in~(\ref{eq:EiNjalpha}). For the same reason, equation~(\ref{eq:B1holder}) will not be necessary and the $\ln(1+\beta(w_k))$ will not appear in~(\ref{eq:B1toL1}), so there will not be any need to shift from $\alpha''$ to $\alpha'''$. Moreover, there is not any need either to shift from $\alpha'$ to $\alpha(1+\eps)=\alpha''$ in equation~(\ref{eq:EiNjalpha2}) since the term $\ln(1+\beta(u))$ will not appear in~(\ref{eq:EiNjalpha1}) (as we can use equation~(\ref{eq:majLj}) instead of~(\ref{eq:majEiBj})). 
\end{proof}

Let us now state a proposition that describes the behaviour of $L^1$ under $\P_n$ when $n$ is large. 
\begin{proposition}\label{prop:Winfini} Let $\alpha\in(0;\kappa-1)$, we have the following convergence in mean of order $1+\alpha$, 
\begin{equation*}
\E_n\Big[\Big|\frac{L^1}{n}-W_\infty|^{1+\alpha}\Big]\sto{n\to\infty} 0, 
\end{equation*}
where $W_\infty$ is the almost sure limit of the positive martingale $(W_k)_{k≥1}:=(\sum_{|u|=k}\ee{-V(u)})_{k≥1}$. 
\end{proposition}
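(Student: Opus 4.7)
The plan is to prove the convergence in $L^{1+\alpha}$ by interpolating through an intermediate generation $k$: for any $\alpha\in(0;\kappa-1)$,
\begin{equation*}
 \Big\|\frac{L^1}{n} - W_\infty\Big\|_{L^{1+\alpha}(\P_n)} \le \Big\|\frac{L^1 - \E_n[L^1\mid\cal F_k]}{n}\Big\|_{L^{1+\alpha}} + \Big\|\frac{\E_n[L^1\mid\cal F_k]}{n} - W_k\Big\|_{L^{1+\alpha}} + \Big\|W_k - W_\infty\Big\|_{L^{1+\alpha}}.
\end{equation*}
My strategy is to make the first and third terms small by taking $k$ large (with bounds uniform in $n$), and then for that fixed $k$ to make the second term small by taking $n$ large.

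The first term is handled by the natural branching decomposition at generation $k$. Writing $A_k(v)=\{\beta(v_1),\dots,\beta(v_{k-1})\neq 1\}$, every $\cal L^1$ vertex above generation $k$ is of the form $v\in\B^1$ together with any $\cal L^1$-vertex of the subtree $\Tb_v$ rooted at a generation-$k$ vertex $v$ satisfying $A_k(v)$; this gives the identity
\begin{equation*}
L^1 = L^1_{\le k-1} + \sum_{|v|=k,\,A_k(v)} L^1(\Tb_v),
\end{equation*}
where the $L^1(\Tb_v)$ are conditionally independent given $\cal F_k$, with conditional mean $\beta(v)$ by (\ref{eq:majLj}). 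Subtracting $\E_n[L^1\mid\cal F_k]=L^1_{\le k-1}+\sum_{|v|=k,A_k(v)}\beta(v)$ yields a conditionally centred sum to which I apply the von Bahr-Esseen inequality at the exponent $p=1+\alpha\in(1;2]$. Using (\ref{eq:majLjalpha}) to control $\E_{\beta(v)}[(L^1)^{1+\alpha}]\le C_\alpha\beta(v)^{1+\alpha}$ for each summand, and then the multitype many-to-one lemma to rewrite $\E_n\big[\sum_{|v|=k,A_k(v)}\beta(v)^{1+\alpha}\big]=n\,\Eh_n\big[\phi_k^{\alpha}\1{\tauh_1\ge k}\big]$, the geometric bound (\ref{eq:majtaualpha}) gives $\Eh_n[\phi_k^\alpha\1{\tauh_1\ge k}]\le C_\alpha n^{\alpha}r^{-k}$ for some $r>1$. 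Collecting, $\|L^1/n-\E_n[L^1|\cal F_k]/n\|_{L^{1+\alpha}}^{1+\alpha}\le C r^{-k}$, uniformly in $n$.

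The third term tends to zero as $k\to\infty$ by Biggins' $L^p$-convergence theorem for the additive branching random walk martingale $(W_k)$: hypothesis ${\bf (H_\kappa)}$ provides $\Eb\big[\big(\sum_{|u|=1}\ee{-V(u)}\big)^\kappa\big]<\infty$ with $1+\alpha<\kappa$, giving $L^{1+\alpha}$-convergence $W_k\to W_\infty$. For the second term, with $k$ fixed, I write
\begin{equation*}
\frac{\E_n[L^1\mid\cal F_k]}{n} - W_k \;=\; \frac{L^1_{\le k-1}}{n} \;-\; \frac{1}{n}\!\!\sum_{|v|=k,A_k(v)^c}\!\!\beta(v) \;+\; \Big(\frac{Z_k}{n} - W_k\Big).
\end{equation*}
The many-to-one lemma gives $\E_n[L^1_{\le k-1}]/n=\Ph_n(\tauh_1\le k-1)$ and $\E_n[\sum_{A_k(v)^c}\beta(v)]/n=\Ph_n(\tauh_1<k)$, both of which vanish as $n\to\infty$ for fixed $k$ because the Markov chain $(\phi_m)$ started at $n$ needs at least $\log n$ steps on average to hit $1$, so each one-step transition $p_{n,1}$ tends to $0$; combined with the uniform $L^{1+\alpha'}$-bound (\ref{eq:majLjalpha}) and the Vallée-Poussin criterion this gives $L^{1+\alpha}$-convergence. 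For the remaining $Z_k/n-W_k$ term, the additivity of negative multinomials in the count parameter lets me write, under $\P^\V_n$, $Z_k=\sum_{l=1}^n Z_k^{(l)}$ with the $Z_k^{(l)}$ i.i.d.\ of quenched mean $W_k$, and one more application of von Bahr-Esseen gives $\Eb[\E^{\V}_n[|Z_k/n-W_k|^{1+\alpha}]]=O(n^{-\alpha})$.

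The main technical obstacle is the uniform-in-$n$ geometric decay in the first term: polynomial control on $\Eh_n[\phi_k^\alpha\1{\tauh_1\ge k}]$ would only give a polynomial rate in $k$ and would not survive the exchange of limits, so it is essential that the refined Lyapunov-type estimate (\ref{eq:majtaualpha}), which incorporates the geometric factor $r^k$, is available; this is precisely what was established in Subsection~\ref{subsec:Hl} via Meyn-Tweedie's Theorem~15.3.3.
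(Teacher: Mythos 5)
Your proof is correct in structure and genuinely different from the paper's. The paper first establishes convergence \emph{in law} of $L^1/n - W_l\wedge M$ by squeezing its Laplace transform between two expressions that both converge to $\Eb[\exp(-t(W_\infty - W_l\wedge M))]$ — the upper bound requiring the Maillard--Zeitouni trick $\ee{-x}\le 1-x+x^{1+\alpha}$ to feed in the moment estimate~(\ref{eq:majLjalpha}) — and only then upgrades to $L^{1+\alpha}$-convergence through uniform integrability, before sending $M\to\infty$ then $l\to\infty$. You instead prove the $L^{1+\alpha}$-convergence directly by interpolating through $\E_n[L^1\mid\cal F_k]$: the branching decomposition at $\cal F_k$ plus von~Bahr--Esseen gives the conditional-fluctuation term a bound of order $r^{-k}$ \emph{uniformly in $n$} (this is exactly where the geometric weight in~(\ref{eq:majtaualpha}) earns its keep, as you correctly stress), Biggins handles $W_k\to W_\infty$, and the remaining bias term, for fixed $k$, splits into three pieces each going to zero. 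Your route avoids Laplace transforms and the $W_l\wedge M$ truncation entirely, at the cost of needing a moment bound on $Z_k$ and a finer bookkeeping of the decomposition at generation $k$; the paper's route avoids the latter at the cost of the two-sided Laplace-transform argument. Both hinge on~(\ref{eq:majLjalpha}) and on $\Ph_n(\tauh_1<k)\to 0$ for fixed $k$.

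Two points you should tighten if you write this out. First, your justification that $\Ph_n(\tauh_1<k)\to 0$ — ``$p_{n,1}\to 0$'' — only controls the one-step case and needs a uniformity argument to propagate over $k$ steps; the paper proves it cleanly by representing $\tauh_1$ as the first level that none of the $n$ launched walks reaches, giving the bound $\Ph_n(\tauh_1<k)\le\Ph(\text{one walk fails to reach level }k)^n$. Second, for the $Z_k/n-W_k$ term you need $\E_1\big[|Z_k-W_k|^{1+\alpha}\big]<\infty$ for each fixed $k$; this is true (iterate Lemma~\ref{lemma:NegBin} across the $k$ generations under $\bf(H_\kappa)$, noting $1+\alpha<\kappa$), but it is not completely free and deserves a line — especially since the offspring number may be infinite and $Z_k$ is then a countable sum of negative-binomial variables. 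Neither point is a fatal gap, but both are where the paper's route is noticeably less demanding.
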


\begin{proof}
At the heart of this proof is~\eqref{eq:majLj} (which states that for every $i≥1$, $E_i[L^1]=i$), and the law of large numbers : for every $u\in\T$, since excursions from $\root$ are independent, 
\begin{equation*}
\frac{\beta(u)}{n}\substack{\P^\V_n\\\longrightarrow\\n\to\infty}\ee{-V(u)}, 
\end{equation*}
the expectation of $\beta(u)$ being computed in~\eqref{eq:espcondbeta}. 

Let us first establish the convergence in law of $L^1/n$. Actually we intend to prove a little more than this: for any $\ell≥1$, $M≥0$, we will prove the convergence in law of $L^1/n-{W_\ell}\wedge M$, where we recall that $W_\ell:=\sum_{|u|=\ell}\ee{-V(u)}$. To this end, we will establish the convergence of its Laplace transform (which exists as it is bounded from below). 

Let us start with the lower bound. Recall that for every $k≥1$ we denote by $\cal{F}_k$ the sigma-algebra generated by $(\beta(u),u\in\Tb,|u|≤k)$, and  $\cal{G}_k:=\sigma((u,V(u))_{u\in\T,|u|≤k})$ the sigma-algebra generated by the environment below generation $k$. For $u\in\Tb$, recall that we denote by $L^1_u$ the cardinal of $\cal{L}^1_u$, and let $W_\ell^M:={W_\ell}\wedge M$. Now for any $t>0$, $k≥\ell$, 
\begin{align*}
\E_n\big[\exp(-t(\frac{L^1}{n}-W_\ell^M))\big]&=\E_n\Big[\E_n\big[\exp(-\frac{t}{n}\Big(\sum_{|u|=k}\1{u<\mathcal{L}^1}L^1_u+\sum_{|u|<k}\1{u\in\mathcal{L}^1}\Big))\mid\cal{F}_k,\cal{G}_k\big]\exp(t W_\ell^M)\Big]\\
&≥\E_n\Big[\exp(-\frac{t}{n}\sum_{|u|=k}\1{u<\mathcal{L}^1}\E_{\beta(u)}[L^1]-\sum_{|u|<k}\frac{t}{n}\1{u\in\mathcal{L}^1})\exp(t W_\ell^M)\Big] \\
&=\E_n\Big[\exp(-\frac{t}{n}\sum_{|u|=k}\1{u<\mathcal{L}^1}\beta(u)-\sum_{|u|<k}\frac{t}{n}\1{u\in\mathcal{L}^1})\exp(t W_\ell^M)\Big], 
\end{align*}
where the last equality comes from~(\ref{eq:majLj}). Now recall that under $\P^\V_n$, $\sum_{|u|=k}\beta(u)$ is the sum of $n$ independent $\sum_{|u|=k}\beta(u)$ under $\P^\V_1$, and that according to Lemma~\ref{lemma:lawbetaconseq}, $\E^\V_1[\sum_{|u|=k}\beta(u)]=\sum_{|u|=k}\ee{-V(u)}$. Hence, conditionally on the environment, $\frac{1}{n}\sum_{|u|=k}\beta(u)\substack{ \P^\V_n\\ \longrightarrow \\ {n\to\infty}}\sum_{|u|=k}\ee{-V(u)}$ in probability, by the law of large number. Moreover, as for any $u\in\T$ of generation $k$, $\1{u<\cal{L}^1}$ tends to $1$ \as with $n$, this implies the convergence in probability of $\frac{1}{n}\sum_{|u|=k}\1{u<\mathcal{L}^1}\beta(u)$ towards $\sum_{|u|=k}\ee{-V(u)}$. These convergences are immediate if $\{u\in\T\st|u|=k\}$ is finite; otherwise, one way to see that is to introduce for every $p≥1$
\begin{equation*}
\cal{E}_k^p:=\{u\in\T\st |u|=k, \#\{v\in\T\st \ee{-V(v)}≥\ee{-V(u)}\}<p \}, 
\end{equation*}
that is the set of the $p$ vertices of generation $k$ with lowest potential. Now 
\begin{align}\label{eq:majEkp}
\Big|\frac{1}{n}\sum_{|u|=k}\beta(u)\1{u<\cal{L}^1}-\sum_{|u|=k}\ee{-V(u)}\Big|≤&\Big|\frac{1}{n}\sum_{u\in\cal{E}_k^p}\beta(u)\1{u<\cal{L}^1}-\sum_{u\in\cal{E}_k^p}\ee{-V(u)}\Big|\nonumber\\
&+\frac{1}{n}\Big|\sum_{|u|=k,u\notin \cal{E}_k^p}\beta(u)\Big|+\Big|\sum_{|u|=k,u\notin \cal{E}_k^p}\ee{-V(u)}\Big|, 
\end{align}
and since $\cal{E}_k^p$ is finite, the first member of this last sum tends to $0$ with $n$, and the second tends \as to $|\sum_{|u|=k,u\notin \cal{E}_k^p}\ee{-V(u)}|$ by the law of large numbers, which can be made as small as desired with $p$ large, hence ensuring the convergence in probability of $\frac{1}{n}\sum_{|u|=k}\beta(u)\1{u<\cal{L}^1}$. Moreover, the many-to-one lemma yields
\begin{equation*}
\frac{1}{n}\E_n\Big[\sum_{|u|<k}\1{u\in\cal{L}^1}\Big]=\Ph_n\big(\tauh_1<k\big)≤\Big(\Ph\big(X^{1,w_0}\text{ does not reach the level $k$}\big)\Big)^n\sto{n\to\infty} 0.
\end{equation*}
The inequality can be seen by the representation of the Markov chain $(\beta(w_k))_{k≥0}$ given in the previous subsection, and using the fact that $\tauh_1$ is the first level such that none of the walks launched below reach it. The latter equation implies the convergence in probability of $n^{-1}\sum_{|u|=k}\1{u<\cal{L}^1}$ towards $0$. \\
Finally, Lebesgue's dominated convergence theorem yields
\begin{equation*}
\liminf_{n\to\infty}\E_n\big[\exp(-t\frac{L^1}{n}+t W_\ell^M)\big]≥\Eb\Big[ \exp(-t\sum_{|u|=k}\ee{-V(u)}+tW_\ell^M)\Big], 
\end{equation*} 
and $k$ being arbitrary, $\liminf_n \E_n[\exp(-t(\frac{L^1}{n}-W_\ell^M))]≥\lim_{k\to\infty}\Eb[ \exp(-t\sum_{|u|=k}\ee{-V(u)}+tW_\ell^M)]=\Eb[\exp(-t(W_\infty-W_\ell^M))]$ (the equality is obtained by Lebesgue's dominated convergence theorem). 

Let us now tackle the upper bound. For any $t>0$, 
\begin{align*}
\E_n\big[\exp(-t(\frac{L^1}{n}-W_\ell^M))\big]&=\E_n\Big[\Big(\exp\big(-\frac{t}{n}(\sum_{|u|<k}\1{u\in\L^1_n}+\sum_{|u|=k, u<\L^1}L^1(u))\big)\wedge 1\Big)\exp(t W_\ell^M)\Big]\\
&≤\E_n\Big[\Big(\prod_{|u|=k,u<\L^1}\E_n\big[\exp\big(-\frac{t}{n}L^1(u)\big)\mid\cal{F}_k\big]\wedge 1\Big)\exp(t W_\ell^M)\Big]. 
\end{align*}
Now as remarked in~\cite{maillard-zeitouni} between equations~(B.4) and~(B.5), the inequality $\ee{-x}≤1-x+x^{1+\alpha}≤\ee{-x+x^{1+\alpha}}$ yields
\begin{align*}
&≤\E_n\Big[\Big(\prod_{|u|=k,u<\L^1}\E_n[1-\frac{t}{n}L^1(u)+(\frac{t}{n}L^1(u))^{1+\alpha}]\wedge 1\Big)\exp(t W_\ell^M)\Big]\\
&≤\E_n\Big[\Big(\prod_{|u|=k,u<\L^1}\exp\big(-\frac{t}{n}\E_n[L^1(u)]+(\frac{t}{n})^{1+\alpha}\E_n[(L^1(u))^{1+\alpha}]\big)\wedge 1\Big)\exp(t W_\ell^M)\Big]\\
&≤\E_n\Big[\Big(\exp\big(-\sum_{|u|=k,u<\L^1} t\frac{\beta(u)}{n}+t^{1+\alpha}C_{\alpha}(\frac{\beta(u)}{n})^{1+\alpha}\big)\wedge 1\Big)\exp(t W_\ell^M)\Big], 
\end{align*}
where the last inequality results from~(\ref{eq:majLjalpha}). Now since $\frac{1}{n}\sum_{|u|=k}\1{u<\cal{L}^1}\beta(u)\sto{n\to\infty}\sum_{|u|=k}\ee{-V(u)}$ in probability and $\frac{1}{n^{1+\alpha}}\sum_{|u|=k}\1{u<\cal{L}^1}\beta(u)^{1+\alpha}\sto{n\to\infty}\sum_{|u|=k}\ee{-(1+\alpha)V(u)}$ (this can be proved using the same strategy as in~(\ref{eq:majEkp})), by Lebesgue's dominated convergence theorem we get 
\begin{equation*}
\limsup_{n\to\infty}\E_n[\exp(-(t\frac{L^1}{n}-W_\ell^M))]≤\Eb\Big[\big(\exp\big(-t\sum_{|u|=k} \ee{-V(u)}+C_{\alpha}t^{1+\alpha}\sum_{|u|=k}\ee{-(1+\alpha)V(u)}\big)\wedge 1\big)\exp(tW_\ell^M)\Big]
\end{equation*}
But according to our hypotheses, as $\Eb[\sum_{|u|=1}\ee{-(1+\alpha)V(u)}]<1$ since $\alpha<\kappa-1$, $\sum_{|u|=k}\ee{-(1+\alpha)V(u)}\sto{k\to\infty}0$ in probability, and therefore the function in the expectation being bounded, and $k$ arbitrary
\begin{align*}
\limsup_{n\to\infty}\E_n[\exp\big(-t(\frac{L^1}{n}-&W_\ell^M)\big)]\\
&≤\lim_{k\to\infty}\Eb\Big[(\exp(-t\sum_{|u|=k} \ee{-V(u)}+C_{\alpha}t^{1+\alpha}\sum_{|u|=k}\ee{-(1+\alpha)V(u)})\wedge 1)\exp(tW_\ell^M)\Big]\\
&=\Eb\Big[\exp(-t(W_\infty-W_\ell^M))\Big]. 
\end{align*}
Thus, the convergence in law of $(L^1/n-W_\ell^M)_n$ is established, as its Laplace transform converges towards that of $W_\infty-W_\ell^M$. 

Let us now consider $\alpha\in(0;\kappa-1)$, we have
\begin{equation*}
\E_n[|\frac{L^1}{n}-W_\ell^M|^{1+\alpha}]≤2\E_n[|\frac{L^1}{n}|^{1+\alpha}]+2\Eb[|W_\ell|^{1+\alpha}]≤C_\alpha+C', 
\end{equation*}
where we used equation~(\ref{eq:majLjalpha}) for the first inequality and Theorem~2.1 of~\cite{liu} for the second ($C'$ being then a constant independent of $\ell$ and $M$). This ensures that $(|L^1/n-W_\ell^M|^{1+\alpha'})_{n≥1}$ is uniformly integrable for any $\alpha'<\alpha$, and $\alpha$ being arbitrary $(|L^1/n-W_\ell^M|^{1+\alpha})_{n≥1}$ is uniformly integrable. This together with the convergence in law of $L^1/n-{W_\ell} \wedge M$ implies the convergence of $\E_n[|L^1/n-W_\ell^M|^{1+\alpha}]$ towards $\Eb[|W_\infty-W_\ell^M|^{1+\alpha}]$ (the convergence in law being enough for the convergence of the expectation). Therefore, as for any $k≥1$,
\begin{equation*}
\E_n\big[|\frac{L^1}{n}-W_\infty|^{1+\alpha}\big]≤2\E_n\big[|\frac{L^1}{n}-W_\ell^M|^{1+\alpha}\big]+2\Eb\big[|W_\ell^M-W_\infty|^{1+\alpha}\big], 
\end{equation*}
we get, as $k$ and $M$ are arbitrary, 
\begin{equation*}
\limsup_n\E_n\big[|\frac{L^1}{n}-W_ \infty|^{1+\alpha}\big]≤4\Eb\big[|W_\infty-W_\ell^M|^{1+\alpha}\big]\sto{M\to\infty} 4\Eb\big[|W_\infty-W_\ell|^{1+\alpha}\big]\sto{\ell\to\infty}0, 
\end{equation*}
thus ensuring the convergence in mean of order ${1+\alpha}$ of $L^1/n$ under $\P_n$ towards $W_\infty$. 
\end{proof}

\subsubsection{Tail of $L^1$ under $\Ph$}\label{subsec:cvqueue}
We now intend to show that~(\ref{eq:equivPhL1>n}) stands. In what follows, we will systematically consider $(\T,V,(w_k)_{k≥0})$ under the law $\Pbh$ and the walk under the law $\Ph_1$ (that we will denote by $\Ph$). Our proof will be strongly inspired from that of H.~Kesten, M.V.~Kozlov and F.~Spitzer in~\cite{kks}. In the latter the authors studied the behaviour of the random walks $(X^{1,w_i}_n)_{n≥0}$ restricted to the spine. Letting $\tauh^1_1:=\min\{k≥0\st \ty^1(w_k)=0\}$ and $\kappa':=\kappa-1$, they proved that under our conditions, 
\begin{equation*}
\Ph(\sum_{k=0}^{\tauh^1_1}\ty^1(w_k)>x)\ssim{x\to\infty} K x^{-\kappa'}, 
\end{equation*}
where $K$ is a positive constant. Our strategy will rely on the fact that under $\Ph$, $\cal{L}^1$ can be decomposed into smaller lines stemming from the spine (plus $w_{\tauh_1}$). In terms of counting, this yields
\begin{equation}\label{eq:decompL1}
L^1=\sum_{k=1}^{\tauh_1} \sum_{u\in\Omega(w_k)} L^1_u\quad+1
\end{equation}
where we recall that for $k≥1$, we denoted by $\Omega(w_k):=\{u\neq w_k\st \parent{u}=w_{k-1}\}$ the set of the siblings of $w_k$, and that for $u\in\T$, $L^1_u$ stands for the cardinal of the vertices of $\cal{L}^1$ descending from $u$ (that we choose here to be equal to $1$ if $u\in\cal{L}^1$). Now let for every $u\in\T$ 
\begin{equation}\label{eq:defWu}
W_\infty^u:=\lim_{n\to\infty} \sum_{u<v, |v|-|u|=n,}\ee{-(V(v)-V(u))}
\end{equation}
be the limit of the additive martingale stemming from $u$ (so $W_\infty=W_\infty^{\root}$). We also let for all $A≥1$
\begin{equation*}
\sigma_{A}:=\min\{k≥1\st \ty(w_k)>A \} 
\end{equation*}
be the first time that the edge local time of the spine is larger than $A$. The first thing that we will prove, in Lemma~\ref{lemma:tau1sigmaA}, is that actually if we want $L^1$ to be large, then $\sigma_A$ has to be smaller than $\tauh_1$, that is, there has to be a vertex on the spine whose edge local time is larger than $A$. Then, still in Lemma~\ref{lemma:tau1sigmaA} we will show that the number of vertices of $\cal{L}^1$ which separated from the spine below $w_{\sigma_A}$ is negligible when $L^1$ is large; that is we will show that the two quantities
\begin{itemize}
\item $L^1=\sum_{k=1}^{\tauh_1}\sum_{u\in\Omega(w_k)}L^1_u+1$ 
\item $\sum_{k=\sigma_A+1}^{\tauh_1}\sum_{u\in\Omega(w_k)}L^1_u$
\end{itemize}
are close. Now, as the heuristics of Proposition~\ref{prop:Winfini} say that for $u\in\T$, $L^1_u\approx \ty(u)W_\infty^u$ when $\ty(u)$ is large, we expect that when $L^1$ is large, 
\begin{equation*}
L^1\approx\sum_{k=\sigma_A+1}^{\tauh_1} \sum_{u\in\Omega(w_k)} \ty(u)W^u_\infty. 
\end{equation*}
This is what we will show at the end of this subsection, in Lemma~\ref{lemma:L1toW}. Before that, in Lemma~\ref{lemma:negspineabove}, we will see that the contribution to $L^1$ of the walks launched above $w_{\sigma_A}$ is negligible; that is if we denote for every $u\in\T$ by
\begin{equation}\label{eq:defbetaA}
\ty^A(u):=\sum_{k=0}^{\sigma_A-1} \ty^1_k(u)+\ty^2_k(u)
\end{equation}
the edge local time on $u$ of the walks launched below $w_{\sigma_A}$, then the quantities 
\begin{itemize}
\item $\sum_{k=\sigma_A+1}^{\tauh_1} \sum_{u\in\Omega(w_k)} \ty(u)W^{u}_\infty$
\item $\sum_{k=\sigma_A+1}^{\tauh_1} \sum_{u\in\Omega(w_k)} \ty^A(u)W^{u}_\infty$
\end{itemize}
are close. And we will see in Lemma~\ref{lemma:final} that for large $A$, the behaviour of this last quantity is dictated by $\ty^A(w_{\sigma_A})$ together with the environment above $w_{\sigma_A}$, namely that
\begin{itemize}
\item $\sum_{k=\sigma_A+1}^{\tauh_1} \sum_{u\in\Omega(w_k)} \ty^A(u)W^{u}_\infty$\quad and
\item $\ty^A(w_{\sigma_A})W^{w_{\sigma_A}}_\infty$
\end{itemize}
are close. 

To sum up, the heuristics indicate that when $L^1$ is large, then it is close to $\ty^A(w_{\sigma_A})W^{w_{\sigma_A}}_\infty$, and we have $\sigma_A<\tauh_1$ w.h.p. We will conclude in Proposition~\ref{prop:xkappa}: we will see that $\Eh[\ty^A(w_{\sigma_A}))^{\kappa'}]\times\P( \sigma_A<\tauh_1 )$ converges towards a non-trivial constant $K$, and since there exists a non-trivial constant $\widehat{C}_\infty$ such that $\Pbh( W^{w_{\sigma_A}}_\infty>x )\sim \widehat{C}_\infty x^{-\kappa'}$, we will get the same tail distribution for $L^1$: $\Ph(L^1>x)\sim K\widehat{C}_\infty x^{-\kappa'}$. 

We determine the tail distribution of $W_{\infty}^{w_{\sigma_A}}$ under $\Pbh$ in the following lemma (this result was first shown by Liu~\cite{liu} in order to get the tail distribution of $W_\infty$ under $\Pb$). 
\begin{lemma}\label{lemma:tailW}
{\bf \cite{liu}} Recall from~\eqref{eq:defWu} the definition of $W_\infty^{u}$ for $u\in\T$. When $x\to\infty$, 
\begin{equation*}
\Pbh\big( W^{w_{\sigma_A}}_\infty>x \big)=\Pbh\big( W_\infty>x \big)\sim \widehat{C}_\infty x^{-\kappa'}, 
\end{equation*}
where 
\begin{equation*}
\widehat{C}_\infty=\frac{1}{\kappa\Eb\Big[\sum_{|u|=1}(-V(u))\ee{-\kappa V(u)}\Big]}\Ebh\Big[ \Big( W_\infty \Big)^{\kappa'}-\Big( \ee{-V(w_1)}W^{w_1}_\infty\Big)^{\kappa'} \Big]. 
\end{equation*}
\end{lemma}

\begin{proof}
The environment above $w_{\sigma_A}$ being independent of $w_{\sigma_A}$, $W^{w_{\sigma_A}}_\infty$ has same law as $W_\infty$, hence the first equality. 

Now notice that under $\Pbh$, 
\begin{equation*}
W_{\infty}=\sum_{k=0}^{\infty}\ee{-V(w_k)}\sum_{u\in\Omega(w_{k+1})}\ee{-\Delta V(u)}W^{u}_{\infty}. 
\end{equation*}
The $(\ee{-\Delta V(w_k)})_{k≥1}$ are i.i.d., with $\Ebh[\ee{-\kappa' V(w_1)}]=1$ and $\Ebh[(-V(w_1))^+\ee{-\kappa' V(w_1)}]<\infty$ thanks to hypothesis $\bf (H_{\kappa})$. Moreover, 
\begin{equation*}
\Ebh\Big[\big(\sum_{u\in\Omega(w_{1})}\ee{-\Delta V(u)}W^{u}_{\infty}\big)^{\kappa'}\Big]≤\Ebh\Big[\big(\sum_{|u|=1}\ee{-\Delta V(u)}\Ebh[W^{u}_{\infty}\mid \mathcal{G}_1]\big)^{\kappa'}\Big]=\Eh\Big[\sum_{|u=1|}\big(\sum_{|u|=1}\ee{-\Delta V(u)}\times 1\big)^{\kappa'}\Big]=1<\infty, 
\end{equation*}
by Jensen's inequality and the many-to-one lemma. Hence, as $N$ is non-lattice, Theorem~B of~\cite{kesten} can be applied to $W_{\infty}$ under $\Pbh$:
\begin{equation*}
\Pbh(W_\infty^{w_{\sigma_A}}>x)=\Pbh(W_\infty>x)\sim \widehat{C}_\infty x^{-\kappa'}. 
\end{equation*}
The expression of $\widehat{C}_\infty$ in terms of fractional moments of $W_\infty$ is given in Theorem~4.1~of~\cite{goldie}. 
\end{proof}

Let us now prove Lemmas~\ref{lemma:tau1sigmaA},~\ref{lemma:final},~\ref{lemma:negspineabove} and~\ref{lemma:L1toW}. 

\begin{lemma}\label{lemma:tau1sigmaA}
For all $\eps>0$, $A>0$, 
\begin{equation*}
\Ph\big( L^1>\eps x,\, \tauh_1≤\sigma_A\big)=o(x^{-\kappa'}). 
\end{equation*}
Moreover, we have for all $\eps>0$, $A>0$, 
\begin{equation*}
\Ph\Big( \sum_{k=1}^{\sigma_A}\sum_{u\in\Omega(w_k)}L^1_u>\eps x,\, \sigma_A<\tauh_1\Big)=o(x^{-\kappa'}). 
\end{equation*}
\end{lemma}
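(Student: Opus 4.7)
The plan is to derive both estimates by Markov's inequality at an exponent $p\in(\kappa-1,\kappa)$. For such $p$, equation~(\ref{eq:majLjalpha}) applied with $\alpha=p-1\in(0,\kappa-1)$ gives $\E_i[(L^1)^p]\leq C_p i^p$ for every $i\geq 1$, and by the branching property, $\E[(L^1_u)^p\mid \beta(u)]\leq C_p\beta(u)^p$ for every off-spine vertex $u$. Since $p>\kappa'$, any bound of the form $\Eh[Z^p\1_E]=O(1)$ on the restricted $p$-th moment delivers the desired $o(x^{-\kappa'})$ bound for $\Ph(Z>\eps x,\,E)$; the whole task is therefore to check the finiteness of
\begin{equation*}
\Eh\bigl[(L^1)^p\1_{\{\tauh_1\leq\sigma_A\}}\bigr]\qquad\text{and}\qquad \Eh\Bigl[\Bigl(\sum_{k=1}^{\sigma_A}\sum_{u\in\Omega(w_k)}L^1_u\Bigr)^{\!p}\1_{\{\sigma_A<\tauh_1\}}\Bigr].
\end{equation*}

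Write $\eta_k:=\sum_{u\in\Omega(w_k)}L^1_u$, so that by~(\ref{eq:decompL1}), $L^1=1+\sum_{k=1}^{\tauh_1}\eta_k$. Conditionally on the spine $(w_k,\phi_k)_{k\geq 0}$ and the environment $\scr{G}$, the $L^1_u$ for $u\in\Omega(w_k)$ are independent, and combining a moment inequality for sums of independent variables (Minkowski, or von Bahr--Esseen when $p\in(1,2]$) with the convexity bound $\sum_u\beta(u)^p\leq(\sum_u\beta(u))^p$ (valid since $p\geq 1$ and $\beta(u)\geq 1$) gives
\begin{equation*}
\Eh\bigl[\eta_k^p\bigm|\phi_{k-1},\scr{G}\bigr]\leq C_p\,\Eh\bigl[S_{k-1}^p\bigm|\phi_{k-1},\scr{G}\bigr],\qquad S_{k-1}:=\sum_{\parent{v}=w_{k-1}}\beta(v).
\end{equation*}
The variable $S_{k-1}$ is negative-binomial as in~(\ref{eq:negmbinomsum}), and re-running the computation performed in the proof of Lemma~\ref{lemma:momB1} (Lemma~\ref{lemma:NegBin} in the appendix, combined with hypothesis $\bf(H_\kappa)$, legitimate because $p<\kappa$) yields, after integrating out the environment, a bound of the form $\Eh[S_{k-1}^p\mid\phi_{k-1}=j]\leq C_p(j+j^{p+1})$. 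In particular, on the event $\{\phi_{k-1}\leq A\}$ this is bounded by a constant $C(A,p)$ depending only on $A$ and $p$.

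For part~(1), the event $\{\tauh_1\leq\sigma_A\}$ forces $\phi_{k-1}\leq A$ for every $1\leq k\leq\tauh_1$, so the power-mean inequality $(\sum_{k=1}^n x_k)^p\leq n^{p-1}\sum_{k=1}^n x_k^p$ together with the previous estimate gives
\begin{equation*}
\Eh\bigl[(L^1)^p\1_{\{\tauh_1\leq\sigma_A\}}\bigr]\leq C\Bigl(1+\Eh\Bigl[\tauh_1^{p-1}\sum_{k=1}^{\tauh_1}\eta_k^p\1_{\{\phi_{k-1}\leq A\}}\Bigr]\Bigr)\leq C\,C(A,p)\,\Eh_1[\tauh_1^p],
\end{equation*}
which is finite by~(\ref{eq:taulog}). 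For part~(2), $\phi_{k-1}\leq A$ holds automatically for every $1\leq k\leq\sigma_A$ by definition of $\sigma_A$, and the same argument gives
\begin{equation*}
\Eh\Bigl[\Bigl(\sum_{k=1}^{\sigma_A}\eta_k\Bigr)^{\!p}\1_{\{\sigma_A<\tauh_1\}}\Bigr]\leq C(A,p)\,\Eh_1\bigl[\sigma_A^p\1_{\{\sigma_A<\tauh_1\}}\bigr]\leq C(A,p)\,\Eh_1[\tauh_1^p]<\infty,
\end{equation*}
using $\sigma_A<\tauh_1$ on the relevant event. Markov's inequality at exponent $p$ then yields the two $o(x^{-\kappa'})$ estimates. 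The main obstacle is the uniform bound on $\Eh[\eta_k^p\mid\phi_{k-1}\leq A,\scr{G}]$: it amounts to pushing the negative-binomial moment computation used in the proof of Lemma~\ref{lemma:momB1} up to an exponent arbitrarily close to $\kappa$, which is exactly what hypothesis $\bf(H_\kappa)$ affords.
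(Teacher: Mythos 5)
Your plan breaks down at the step where you claim $\Eh[S_{k-1}^p\mid\phi_{k-1}=j]\leq C_p(j+j^{p+1})$ for some $p\in(\kappa-1,\kappa)$, citing ${\bf(H_\kappa)}$ as ``legitimate because $p<\kappa$''. Under $\Ph$ the spine offspring is size-biased (Proposition~\ref{prop:markovtype}$(i)$), so $\Eh_j[S^p]=\frac{1}{j}\E_j[S^{p+1}]$, and by Jensen applied to the negative-binomial conditional mean $\E_j[S\mid\scr{G}_1]=jW_1$ (with $W_1:=\sum_{|u|=1}\ee{-V(u)}$), one gets $\E_j[S^{p+1}]\geq j^{p+1}\Eb[W_1^{p+1}]$. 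Since $p+1>\kappa$, hypothesis ${\bf(H_\kappa)}$ only provides $\Eb[W_1^{\kappa}]<\infty$ and gives no control on $\Eb[W_1^{p+1}]$; generically (e.g.\ when the offspring number has a power tail of exact index $\kappa$) this higher moment is infinite, so your claimed bound, and hence the finiteness of the restricted $p$-th moment, simply does not follow. The constraint $p<\kappa$ is the right one for the moments of the individual $L^1_u$ given their types, but the biased type-sum $S_{k-1}$ is one degree worse, and that pushes you past the available moment.

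This is precisely why the paper works at the exponent $\kappa'=\kappa-1$ and no higher: after the $\frac{1}{j}\E_j[S^{1+\kappa'}]$ re-weighting, the exponent landing on $S$ (and hence on $W_1$) is exactly $\kappa$, which ${\bf(H_\kappa)}$ covers. Markov at exponent $\kappa'$ then only yields $O(x^{-\kappa'})$; to upgrade this to $o(x^{-\kappa'})$ the paper keeps the indicator $\1{L^1>\eps x}$ inside the $\kappa'$-th moment and appeals to dominated convergence, since $\1{L^1>\eps x}\to 0$ pointwise and the dominating $\kappa'$-th moment (with $\1{\ty(w_{k-1})<A}$ in place of $\1{\tauh_1\leq\sigma_A}$ or $\1{\sigma_A<\tauh_1}$) is finite. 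Your proposal misses this device and instead tries to extract the extra $o(1)$ factor from a higher Markov exponent, which is exactly what the hypotheses do not afford.
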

\begin{proof}
Let $\eps>0$ and $A>0$. Let us apply Markov's inequality in both cases:
\begin{align*}
\Ph\big( L^1>\eps x,\, \tauh_1≤\sigma_A\big)&≤(\eps x)^{-\kappa'}\Eh\Big[ (L^1\1{\tauh_1≤\sigma_A})^{\kappa'}\1{L^1>\eps x} \Big]\\
&=(\eps x)^{-\kappa'}\Eh\Big[ \Big(1+\sum_{k=1}^{\tauh_1}\sum_{u\in\Omega(w_k)}L^1_u\Big)^{\kappa'}\1{\tauh_1≤\sigma_A}\1{L^1>\eps x} \Big]\\
&≤(\eps x)^{-\kappa'}\Eh\Big[ \Big(1+\sum_{k=1}^{\tauh_1}\1{\ty(w_{k-1})<A}\sum_{u\in\Omega(w_k)}L^1_u\Big)^{\kappa'}\1{L^1>\eps x} \Big], 
\end{align*}
where the equality is due to~(\ref{eq:decompL1}). For the same reason, 
\begin{align*}
\Ph\big( \sum_{k=1}^{\sigma_A}\sum_{u\in\Omega(w_k)}L^1_u>\eps x,\, \sigma_A<\tauh_1\big)&≤(\eps x)^{-\kappa'}\Eh\Big[ \Big(1+\sum_{k=1}^{\sigma_A}\sum_{u\in\Omega(w_k)}L^1_u\Big)^{\kappa'}\1{\sigma_A<\tauh_1}\1{L^1>\eps x} \Big]\\
&≤(\eps x)^{-\kappa'}\Eh\Big[ \Big(1+\sum_{k=1}^{\tauh_1}\1{\ty(w_{k-1})<A}\sum_{u\in\Omega(w_k)}L^1_u\Big)^{\kappa'}\1{L^1>\eps x} \Big]. 
\end{align*}
Now since $\1{L^1>\eps x}$ tends to $0$ when $x$ tends to infinity, it is enough to show the finiteness of~$\Eh\Big[ \Big(\sum_{k=1}^{\tauh_1}\1{\ty(w_{k-1})<A}\sum_{u\in\Omega(w_k)}L^1_u\Big)^{\kappa'} \Big]$ to conclude. We have, $\kappa'$ being smaller than $1$, 
\begin{align*}
\Eh\Big[ \Big(\sum_{k=1}^{\tauh_1}\1{\ty(w_{k-1})<A}\sum_{u\in\Omega(w_k)}L^1_u\Big)^{\kappa'} \Big]&≤\Eh\Big[ \sum_{k=1}^{\tauh_1}\1{\ty(w_{k-1})<A}\big(\sum_{u\in\Omega(w_k)}L^1_u\big)^{\kappa'} \Big]\\
&≤\Eh\Big[ \sum_{k=1}^{\tauh_1}\1{\ty(w_{k-1})<A}\big(\sum_{u\in\Omega(w_k)}\E_{\ty(u)}\big[L^1\big]\big)^{\kappa'} \Big]\\
&=\Eh\Big[ \sum_{k=1}^{\tauh_1}\1{\ty(w_{k-1})<A}\big(\sum_{u\in\Omega(w_k)}\ty(u)\big)^{\kappa'} \Big], 
\end{align*}
where the second inequality is obtained after applying Jensen's inequality (as $\kappa'≤1$) and the branching property. The last equality is due to~(\ref{eq:majLj}). Now conditioning on the edge local times of the spine and using the branching property, we get that this is
\begin{align*}
&≤\Eh\Big[ \sum_{k=1}^{\tauh_1}\1{\ty(w_{k-1})<A}\Eh_{\ty(w_{k-1})}\big[\big(\sum_{|u|=1}\ty(u)\big)^{\kappa'}\big] \Big]\\
&=\Eh\Big[ \sum_{k=1}^{\tauh_1}\frac{\1{\ty(w_{k-1})<A}}{\ty(w_{k-1})}\E_{\ty(w_{k-1})}\big[\big(\sum_{|u|=1}\ty(u)\big)\big(\sum_{|u|=1}\ty(u)\big)^{\kappa'}\big] \Big]\\
&=\Eh\Big[ \sum_{k=1}^{\tauh_1}\frac{\1{\ty(w_{k-1})<A}}{\ty(w_{k-1})}\E_{\ty(w_{k-1})}\big[\big(\sum_{|u|=1}\ty(u)\big)^\kappa\big] \Big]
\end{align*}
where the first equality comes from Proposition~\ref{prop:markovtype}. Finally, as noticed in Lemma~\ref{lemma:lawbetaconseq}, under $\P_{\ty(w_{k-1})}$, $\sum_{|u|=1}\ty(u)$ has the law of the sum of $(\ty(w_{k-1})$ geometric r.v. of parameter $1/\big(1+\sum_{|u|=1}\ee{-V(u)}\big)$, and Lemma~\ref{lemma:NegBin} yields that this is
\begin{align*}
&≤\Eh\Big[ \sum_{k=1}^{\tauh_1}\frac{\1{\ty(w_{k-1})<A}}{\ty(w_{k-1})}\Big(16\ty(w_{k-1})\big(\Eb\Big[\sum_{|u|=1}\ee{-V(u)}\Big]+\Eb\Big[\big(\sum_{|u|=1}\ee{- V(u)}\big)^\kappa\Big]\big)+2(\ty(w_{k-1}))^\kappa\Eb\Big[\big(\sum_{|u|=1}\ee{-V(u)}\big)^\kappa\Big]\Big) \Big]\\
&≤\Eh\Big[ \sum_{k=1}^{\tauh_1}\Big(16\big(\Eb\Big[\sum_{|u|=1}\ee{-V(u)}\Big]+\Eb\Big[\big(\sum_{|u|=1}\ee{- V(u)}\big)^\kappa\Big]\big)+2 A^{\kappa'}\Eb\Big[\big(\sum_{|u|=1}\ee{-V(u)}\big)^\kappa\Big]\Big) \Big]<\infty, 
\end{align*}
by $\bf (H_\kappa)$, and as $\Eh[\tauh_1]$ is finite. 
\end{proof}
In light of~(\ref{eq:decompL1}), we get from this lemma that for any $\eps>0$, for sufficiently large $A$ and $x$, 
\begin{equation}\label{eq:encadreL1}
\Ph\Big( \sum_{k=\sigma_A+1}^{\tauh_1}\sum_{u\in\Omega(w_k)}L^1_u> x,\, \sigma_A<\tauh_1\Big)≤\Ph\big(L^1>x\big)≤\Ph\Big( \sum_{k=\sigma_A+1}^{\tauh_1}\sum_{u\in\Omega(w_k)}L^1_u>(1-\eps) x,\, \sigma_A<\tauh_1\Big)+2\eps x^{-\kappa'}. 
\end{equation}
In the lemmas to come, we will consider a certain quantity, namely $\Eh\big[\big(\ty^A(w_{\sigma_A})\big)^{\kappa'}\1{\sigma_A<\tauh_1}\big]$. Let us prove that it is finite.
\begin{lemma}\label{lemma:KA}
We let
\begin{equation*}
K_A:=\Eh\big[\big(\ty^A(w_{\sigma_A})\big)^{\kappa'}\1{\sigma_A<\tauh_1}\big]=\Eh\big[\big(\ty^A(w_{\sigma_A})\big)^{\kappa'}\big]\Ph\Big(\sigma_A<\tauh_1\Big). 
\end{equation*}
The quantity $K_A$ is finite. 
\end{lemma}
\begin{proof}
The equality is due to the strong Markov property: if we let ${\sigma_A^+:=\min\{k>\tauh_1\st \ty(w_k)>A\}}$, then
\begin{equation*}
\Eh\Big[\big(\ty^A(w_{\sigma_A})\big)^{\kappa'}(1-\1{\sigma_A<\tauh_1})\Big]=\Eh\Big[\big(\ty^A(w_{\sigma^+_A})\big)^{\kappa'}(1-\1{\sigma_A<\tauh_1})\Big]=\Eh\Big[\big(\ty^A(w_{\sigma_A})\big)^{\kappa'}\Big]\Ph\Big(1-\1{\sigma_A<\tauh_1})\Big), 
\end{equation*}
by the strong Markov property in $\tauh_1$. 

To prove that this quantity is finite, let us just follow the lines of Lemma~4 of~\cite{kks}. For convenience, we will actually study $\ty(w_{\sigma_A})=\ty^A(w_{\sigma_A})+1$. Let $A>0$, we have on the event $\{\sigma_A<\tauh_1\}$, 
\begin{equation*}
\ty(w_{\sigma_A})=\big(\ty(w_{\sigma_A-1})+1\big)\frac{\ty(w_{\sigma_A})}{\ty(w_{\sigma_A-1})+1}≤(A+1)\frac{\ty(w_{\sigma_A})}{\ty(w_{\sigma_A-1})+1}≤(A+1)\sum_{k=1}^{\tauh_1-1}\frac{\ty(w_{k})}{\ty(w_{k-1})+1}.
\end{equation*}
Thus, as $\kappa'≤1$, and then by Jensen's inequality, we have
\begin{align*}
K_A=\Eh\big[\big(\ty(w_{\sigma_A})\big)^{\kappa'}\1{\sigma_A<\tauh_1}\big]&≤(A+1)^{\kappa'}\sum_{k≥1}\Eh\Big[\1{k<\tauh_1}\big(\frac{\ty(w_{k})}{\ty(w_{k-1})+1}\big)^{\kappa'}\Big]\\
&≤(A+1)^{\kappa'}\sum_{k≥1}\Eh\Big[\1{k<\tauh_1}\Big(\frac{\Eh\big[\ty(w_{k})\big|\big(\ty(w_{\ell})\big)_{\ell\in\brint{1;k-1}}\big]}{\ty(w_{k-1})+1}\Big)^{\kappa'}\Big]. 
\end{align*}
Now recall from Subsection~\ref{subsec:lawtb} that for any $k≥1$ $\ty(w_k)$ is a negative binomial random variable of parameters $\big(\ty(w_{k-1})+1,1/(1+\ee{V(w_k)-V(w_{k-1})})\big)$ plus one. Thus, 
\begin{align*}
K_A&≤(A+1)^{\kappa'}\sum_{k≥1}\Eh\Big[\1{k<\tauh_1}\Big(\frac{\big(\ty(w_{k-1})+1\big)\ee{-\Delta V(w_k)}+1}{\ty(w_{k-1})+1}\Big)^{\kappa'}\Big]\\
&≤(A+1)^{\kappa'}\sum_{k≥1}\Eh\Big[\1{k<\tauh_1}\big(\ee{-\kappa'\Delta V(w_k)}+1\big)\Big]
\end{align*}
The event $\{k<\tauh_1\}=\{k-1≤\tauh_1\}$ being independent of the environment above $w_{k-1}$ (so independent of $\ee{-\Delta V(w_k)}$), we get
\begin{align*}
K_A&≤(A+1)^{\kappa'}\sum_{k≥1}\Eh[\1{k<\tauh_1}]\big(\Ebh\big[\ee{-\kappa'\Delta V(w_k)}\big]+1\big)\\
&=(A+1)^{\kappa'}\Eh[\tauh_1-1]\big(\psi(\kappa)+1\big)<\infty, 
\end{align*}
where we used the branching property and the many-to-one lemma (Lemma~\ref{lemma:many-to-one-environnement}) in the last equality. The expectation $\Eh[\tauh_1-1]$ is finite thanks to~(\ref{eq:taulog}).
\end{proof}
\begin{lemma}\label{lemma:final}
For all $\eps>0$, there exists an $A_0$ such that for any $A>A_0$ we have for all $x≥1$,
\begin{equation*}
\Ph\Big(\Big| \big(\sum_{k=\sigma_A+1}^{\tauh_1}\sum_{u\in\Omega(w_{k})}\ty^A(u)W^{u}_\infty\big) - \ty^A(w_{\sigma_A})W^{w_{\sigma_A}}_\infty \Big|>\eps x,\ \sigma_A<\tauh_1\Big)≤\eps x^{-\kappa'} K_A. 
\end{equation*}
\end{lemma}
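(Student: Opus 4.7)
The plan is to reduce the estimate to a $\kappa'$-moment analysis. Setting $m := \ty^{\sigma_A}(w_{\sigma_A})$ and denoting by $Y$ the quantity inside the absolute value, I would first split $\Ph(|Y|>\eps x,\sigma_A<\tauh_1) \leq \Ph(|Y_1|>\eps x/2,\sigma_A<\tauh_1) + \Ph(|Y_2|>\eps x/2,\sigma_A<\tauh_1)$ using a spine decomposition of $W^{w_{\sigma_A}}_\infty$. Iterating the branching identity $W^v_\infty = \sum_{c\text{ child of }v}\ee{-(V(c)-V(v))}W^c_\infty$ between $w_{\sigma_A}$ and $w_{\tauh_1}$ gives
\begin{equation*}
W^{w_{\sigma_A}}_\infty = \sum_{k=\sigma_A+1}^{\tauh_1}\sum_{u\in\Omega(w_k)}\ee{-(V(u)-V(w_{\sigma_A}))}W^u_\infty + \ee{-(V(w_{\tauh_1})-V(w_{\sigma_A}))}W^{w_{\tauh_1}}_\infty.
\end{equation*}
Multiplying by $m$ and subtracting from the target sum produces $Y = Y_1 - Y_2$ with
\begin{align*}
Y_1 &:= \sum_{k=\sigma_A+1}^{\tauh_1}\sum_{u\in\Omega(w_k)}\bigl[\ty^{\sigma_A}(u) - m\ee{-(V(u)-V(w_{\sigma_A}))}\bigr]W^u_\infty,\\
Y_2 &:= m\,\ee{-(V(w_{\tauh_1})-V(w_{\sigma_A}))}W^{w_{\tauh_1}}_\infty.
\end{align*}

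For the leftover $Y_2$: by the branching property, $W^{w_{\tauh_1}}_\infty$ has the law of $W_\infty$ under $\Pbh$, and by~(\ref{eq:Wixk}) has tail of order $x^{-\kappa'}$, so its $\kappa'$-moment diverges and Markov's inequality is not available. Instead, I would condition on the spine data and apply the tail estimate $\Pbh(W_\infty>y)\leq C y^{-\kappa'}$ to the event $\{W^{w_{\tauh_1}}_\infty > \eps x/(2m\ee{-(V(w_{\tauh_1})-V(w_{\sigma_A}))})\}$, obtaining $\Ph(|Y_2|>\eps x/2,\sigma_A<\tauh_1)\leq C'(\eps x)^{-\kappa'}\Eh[m^{\kappa'}\ee{-\kappa'(V(w_{\tauh_1})-V(w_{\sigma_A}))}\1{\sigma_A<\tauh_1}]$. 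The key step is then to show that the ratio of this expectation by $\Eh[m^{\kappa'}\1{\sigma_A<\tauh_1}]$ vanishes as $A\to\infty$, using that on $\{\sigma_A<\tauh_1\}$ the local time chain $(\phi_k)$ must complete a long descent from above $A$ back to $1$ while the spine potential accumulates the positive drift $-\psi'(1)>0$ under $\Pbh$, with the joint dependence of $m$ and $V$ tracked via the kernel~(\ref{eq:negmbinomsum}).

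For $Y_1$: Markov's inequality of order $\kappa'$ together with $\kappa'\leq 1$ subadditivity yields the bound $\Ph(|Y_1|>\eps x/2,\sigma_A<\tauh_1)\leq (\eps x/2)^{-\kappa'}\Eh[\sum_{k=\sigma_A+1}^{\tauh_1}\sum_{u\in\Omega(w_k)}|\ty^{\sigma_A}(u)-m\ee{-(V(u)-V(w_{\sigma_A}))}|^{\kappa'}(W^u_\infty)^{\kappa'}\1{\sigma_A<\tauh_1}]$. Each $W^u_\infty$ for $u\in\Omega(w_k)$ is, by the branching property, independent of the walk and of the environment outside its subtree, with $\kappa'$-moment finite under $\Pb$ since $\kappa'<\kappa$. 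Conditioning on the environment and on the sequence of spine local times, the structure of Subsection~\ref{subsec:lawtb} together with~(\ref{eq:negmultinom}) identifies $(\ty^{\sigma_A}(u))_{u\in\Omega(w_k)}$ as a negative-multinomial vector with parameters inherited from $\ty^{\sigma_A}(w_{k-1})$; Lemma~\ref{lemma:NegBin} and ${\bf(H_\kappa)}$ then bound the conditional $\kappa'$-moment of the centered bracket by a function of $\ty^{\sigma_A}(w_{k-1})$ and local exponential weights. Summing over $u$, then over $k$ via the many-to-one Lemma~\ref{lemma:many-to-one-multitype}, and exploiting the telescoping afforded by $\psi(\kappa)=1$, one gets a bound of the form $\delta(A)x^{-\kappa'}\Eh[m^{\kappa'}\1{\sigma_A<\tauh_1}]$ with $\delta(A)\to 0$. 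Picking $A_0$ so that both contributions are dominated by $(\eps/2)x^{-\kappa'}\Eh[m^{\kappa'}\1{\sigma_A<\tauh_1}]$ concludes.

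The principal obstacle is the critical identity $\psi(\kappa)=1$ (equivalently $\Ebh[\ee{-\kappa'\hat S_1}]=1$): the natural exponential killing $\ee{-\kappa'(V(w_{\tauh_1})-V(w_{\sigma_A}))}$ is martingale-like under $\Pbh$ and therefore provides no decay in $A$ on its own, so the required smallness must be extracted entirely from the conditioning $\{\sigma_A<\tauh_1\}$ and the long local-time excursion it forces. Coupled with the necessity of working exclusively within $\kappa'\leq 1$ subadditivity --- which precludes any variance-type argument --- the quantitative comparison between the genuine fluctuation and the typical size $m\,W^{w_{\sigma_A}}_\infty$ is the key technical challenge.
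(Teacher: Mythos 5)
Your algebraic decomposition is close in spirit to the paper's, and your intuition for where the smallness comes from (fluctuations of order $\sqrt{m}$ rather than $m$, giving an exponent $\kappa'/2$ which lands $1+\kappa'/2$ strictly inside $(1;\kappa)$ so that $\psi(1+\kappa'/2)<1$) is correct. But there is a real gap in the treatment of your leftover term $Y_2$, and you yourself flag it without closing it.

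The source of the problem is that you truncate the spine expansion of $W^{w_{\sigma_A}}_\infty$ at level $\tauh_1$, producing the residue $Y_2 = m\,\ee{-(V(w_{\tauh_1})-V(w_{\sigma_A}))}W^{w_{\tauh_1}}_\infty$. To absorb $Y_2$ you need to show that $\Eh\big[m^{\kappa'}\ee{-\kappa'(V(w_{\tauh_1})-V(w_{\sigma_A}))}\1{\sigma_A<\tauh_1}\big]$ is $o\big(\Eh[m^{\kappa'}\1{\sigma_A<\tauh_1}]\big)$ as $A\to\infty$. Because $\psi(\kappa)=1$, the weight $\ee{-\kappa'(V(w_k)-V(w_{\sigma_A}))}$ is a (conditional) martingale along the spine and carries no decay on its own; all the smallness would have to come from the negative correlation between the stopping rule $\{\sigma_A<\tauh_1\}$ and the potential increment. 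This is a genuine quantitative renewal-type estimate (joint law of $m$, $\tauh_1-\sigma_A$ and $V(w_{\tauh_1})-V(w_{\sigma_A})$ under $\Pbh$ conditioned on $\{\sigma_A<\tauh_1\}$) that you do not carry out. Note also that extending your $Y_1$-sum to all $k\geq\sigma_A+1$ does not help: for $k$ above $\tauh_1$ the bracket reduces to $-m\,\ee{-(V(u)-V(w_{\sigma_A}))}$, and after taking $\kappa'$-moments the resulting series $\sum_k\psi(\kappa)^{k-\sigma_A}$ again fails to converge.

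The paper sidesteps this entirely by a summation-by-parts step. Writing the global deviation $\ty^{\sigma_A}(w_k) - \ty^{\sigma_A}(w_{\sigma_A})\ee{-(V(w_k)-V(w_{\sigma_A}))}$ as a telescoping sum of \emph{one-step} centred increments $\ty^{\sigma_A}(w_l) - \ty^{\sigma_A}(w_{l-1})\ee{-(V(w_l)-V(w_{l-1}))}$ and then inverting the order of summation, the paper obtains a series whose terms vanish identically once $l>\tauh_1$ (since $\ty^{\sigma_A}(w_l)=\ty^{\sigma_A}(w_{l-1})=0$ there). There is then no leftover, and the exponential weight that appears in the estimate has exponent $\kappa'/2$, giving $\psi(1+\kappa'/2)<1$ and convergence of the geometric series. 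This is the key structural insight you are missing: centre at the one-step conditional mean and sum by parts, rather than centring at the global conditional mean and truncating. Lemma~\ref{lemma:NegBin} then controls the one-step increments exactly as you envision for $Y_1$. A secondary difference: the paper proves the statement for an arbitrary family $(Z_u)_{u\in\Omega}$ of i.i.d.\ variables (not just $W^u_\infty$) with finite first moment, because the same estimate is reused verbatim in Lemma~\ref{lemma:L1toW}; your proposal would need to be re-run there.
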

\begin{proof}
Actually, we intend to show a more general result, which will be helpful in the proof of Lemma~\ref{lemma:L1toW}: let us consider a family of \iid random variables indexed by the set of the siblings of the spine, that we denote by $(Z_u)_{u\in\Omega}$ (where we have set $\Omega :=\bigcup_{k≥1}\Omega(w_k)$). Suppose that this family is independent of the walk and the environment on the spine and the siblings of the spine $\big((\ty(w_k),V(w_k))_{k≥0},(\ty(u),V(u))_{u\in\Omega}\big)$, and suppose that these random variables admit a finite moment of order $\kappa'$. Let for every $k≥0$
\begin{equation}\label{eq:Zwk}
Z_{w_k}:=\sum_{\ell≥k+1}\ee{-(V(w_{\ell-1})-V(w_k))}\sum_{u\in\Omega(w_\ell)}\ee{-\Delta V(u)}Z_u. 
\end{equation}
Notice that when $(Z_u)_{u\in\Omega}=(W^u_\infty)_{u\in\Omega}$, then $Z_{w_k}=W_\infty^{w_k}$. As the $Z_u$ have a finite moment of order $\kappa'≤1$, so do the $\sum_{u\in\Omega(w_\ell)}\ee{-(V(u)-V(w_{\ell-1}))}Z_u$ for $\ell≥1$ (this can be seen using the many-to-one lemma, Hölder's inequality and $\bf (H_\kappa)$), and then Theorem~B of~\cite{kesten} ensures that there exists a constant $C_Z$ such that for any $k≥0$, 
\begin{equation}\label{eq:limZu}
\Ph\Big( Z_{w_k}>x \Big) \ssim{x\to\infty} C_Z x^{-\kappa'}. 
\end{equation}
Let us fix $\eps>0$. We want to prove that there exists an $A_0$ such that for any $A>A_0$ and any $x≥1$:
\begin{equation}\label{eq:lemmafinal}
\Ph\Big(\Big| \big(\sum_{k=\sigma_A+1}^{\tauh_1}\sum_{u\in\Omega(w_{k})}\ty^A(u)Z_u\big) - \ty^A(w_{\sigma_A})Z_{w_{\sigma_A}} \Big|>\eps x,\ \sigma_A<\tauh_1\Big)≤\eps x^{-\kappa'} K_A. 
\end{equation}
The $(W^u_\infty)_{u\in\Omega}$ being \iid and independent of the walk and the environment on the spine and the siblings of the spine, and admitting a finite moment of order $\kappa'$, proving~(\ref{eq:lemmafinal}) is enough to prove the lemma. \\

\noindent As $\ty^A(v)=0$ for vertices $v\in\T$ such that $w_{\tauh_1}≤v$, and by definition of $Z_{w_{\sigma_A}}$, we have
\begin{align*}
\sum_{k=\sigma_A+1}^{\tauh_1}\sum_{u\in\Omega(w_{k})}\ty^A(u)&Z_u - \ty^A(w_{\sigma_A})Z_{w_{\sigma_A}} =\sum_{k≥\sigma_A+1}\sum_{u\in\Omega(w_{k})}\big(\ty^A(u)- \ty^A(w_{k-1})\ee{-\Delta V(u)}\big)Z_u\nonumber\\
&+\sum_{k≥\sigma_A+1}\big(\ty^A(w_{k-1})-\ty^A(w_{\sigma_A})\ee{-(V(w_{k-1})-V(w_{\sigma_A}))}\big)\sum_{u\in\Omega(w_{k})}\ee{-\Delta V(u)}Z_u. 
\end{align*}
Hence, by the union bound, denoting $Z_{\Omega(w_k)}:=\sum_{u\in\Omega(w_{k})}\ee{-(V(u)-V(w_{k-1}))}Z_u$ for every $k≥1$, we just have to prove that the two following quantities
\begin{align*}
&f_A(x):=\Ph\Big(\Big|\sum_{k≥\sigma_A}\big(\ty^A(w_{k})-\ty^A(w_{\sigma_A})\ee{-(V(w_{k})-V(w_{\sigma_A}))}\big)Z_{\Omega(w_{k+1})}\Big|>\frac{\eps}{2} x,\ \sigma_A<\tauh_1\Big)\quad\text{and} \\ 
&g_A(x):=\Ph\Big(\Big|\sum_{k≥\sigma_A+1}\sum_{u\in\Omega(w_{k})}\big(\ty^A(u)- \ty^A(w_{k-1})\ee{-(V(u)-V(w_{\sigma_A}))}\big)Z_u\Big|>\frac{\eps}{2} x,\ \sigma_A<\tauh_1\Big)
\end{align*}
are smaller than $\frac{\eps}{2} x^{-\kappa'} \Eh\big[\big(\ty^A(w_{\sigma_A})\big)^{\kappa'}\1{\sigma_A<\tauh_1}\big]$ for $A$ large enough uniformly in $x≥1$ and $\eps>0$ to get~(\ref{eq:lemmafinal}). Let $\eps>0$, $x≥1$. 

Let us start with $f_A(x)$. Let us decompose $\ty^A(w_k)-\ty^A(w_{\sigma_A})\ee{-(V(w_k)-V(w_{\sigma_A}))}$ for $k≥\sigma_A$: 
\begin{align*}
\big(\ty^A(w_k) - \ty^A(w_{\sigma_A})\ee{-(V(w_k)-V(w_{\sigma_A}))}\big)&=\sum_{\ell=\sigma_A+1}^{k}\big(\ty^A(w_{\ell})\ee{-(V(w_k)-V(w_{\ell}))} - \ty^A(w_{\ell-1})\ee{-(V(w_k)-V(w_{\ell-1}))}\big)\\
&=\sum_{\ell=\sigma_A+1}^{k}\ee{-(V(w_k)-V(w_{\ell}))}\big(\ty^A(w_\ell) - \ty^A(w_{\ell-1})\ee{-(V(w_\ell)-V(w_{\ell-1}))}\big). 
\end{align*}
Hence, inverting the sums on $k$ and $\ell$ we get
\begin{align*}
 \sum_{k≥\sigma_A}\big|\ty^A&(w_{k})-\ty^A(w_{\sigma_A}|\ee{-(V(w_{k})-V(w_{\sigma_A}))}\big)Z_{\Omega(w_{k+1})}\\
&≤ \sum_{\ell≥\sigma_A+1}\big|\ty^A(w_\ell)-\ty^A(w_{\ell-1})\ee{-(V(w_\ell)-V(w_{\ell-1}))}\big|\sum_{k≥\ell}\ee{-(V(w_k)-V(w_{\ell}))}Z_{\Omega(w_{k+1})} 
\\
&=\sum_{\ell≥\sigma_A+1}\big(\ty^A(w_\ell) - \ty^A(w_{\ell-1})\ee{-\Delta V(w_\ell)}\big)Z_{w_\ell}.
\end{align*}
This yields (using also the fact that $\sum_{\ell≥1}\ell^{-2}=\pi^2/6<2$), 
\begin{align}\label{eq:longproof1ineg}
\begin{aligned}&f_A(x)=\Ph\Big(\big|\sum_{\ell≥\sigma_A+1}\big(\ty^A(w_\ell) - \ty^A(w_{\ell-1})\ee{-\Delta V(w_\ell)}\big)Z_{w_\ell}\big|> \sum_{\ell≥\sigma_A+1}(\ell-\sigma_A)^{-2}6\pi^{-2}\eps x, \sigma_A<\tauh_1\Big)\\
&≤\Eh\Big[\sum_{\ell≥\sigma_A+1} \Ph\Big(Z_{w_\ell}>\frac{(\ell-\sigma_A)^{-2}\eps x}{2|\ty^A(w_\ell)-\ty^A(w_{\ell-1})\ee{-\Delta V(w_\ell)}|}, \sigma_A<\tauh_1\,\Big|\, \sigma_A, (V(w_k),\ty^A(w_k))_{k≤\ell} \Big)\Big]
\end{aligned}\nonumber\\
≤\Eh\Big[\sum_{\ell≥\sigma_A+1} C'_Z(\eps x)^{-\kappa'}2^{\kappa'}(\ell-\sigma_A)^{2\kappa'}|\ty^A(w_\ell)-\ty^A(w_{\ell-1})\ee{-\Delta V(w_\ell)}|^{\kappa'}\1{\sigma_A<\tauh_1}\Big], 
\end{align}
where the last inequality comes from equation~(\ref{eq:limZu}), and where $C'_Z≥C_Z$ is a suitable constant. Now, as each $\ty^A(w_\ell)$ has the law of the sum of $\ty^A(w_{\ell-1})$ geometric r.v. of parameter $\ee{-\Delta V(w_\ell)}/(1+\ee{-\Delta V(w_\ell)})$, we have (using Jensen's inequality first)
\begin{align}\label{eq:longdiff}
\Eh\Big[ |\ty^A(w_\ell)-\ty^A&(w_{\ell-1})\ee{-\Delta V(w_\ell)}|^{\kappa'} \big| \ty^A(w_{\ell}),\Delta V(w_\ell) \Big]\nonumber\\
&≤\Eh\Big[ |\ty^A(w_\ell)-\ty^A(w_{\ell-1})\ee{-\Delta V(w_\ell)}|^{2} \big| \ty^A(w_{\ell}),\Delta V(w_\ell)\Big]^{\kappa'/2}\nonumber\\
&=\Eh\Big[\ty^A(w_{\ell-1})\big(\ee{-2\Delta V(w_\ell)}+\ee{-\Delta V(w_\ell)}\big)\big| \ty^A(w_{\ell}),\Delta V(w_\ell)\Big]^{\kappa'/2}\nonumber\\
&≤\big(\ty^A(w_{\ell-1})\big)^{\kappa'/2}\big(\ee{-\kappa'\Delta V(w_\ell)}+\ee{-\frac{\kappa'}{2}\Delta V(w_\ell)}\big), 
\end{align}
( we recall that $\kappa'≤1$). Plugging this into~(\ref{eq:longproof1ineg}) yields
\begin{align}\label{eq:longproof1last}
f_A(x)&≤C'_Z 2^{\kappa'}(\eps x)^{-\kappa'}\Eh\big[\sum_{\ell≥\sigma_A+1}(\ell-\sigma_A)^{2\kappa'}\big(\ty^A(w_{\ell-1})\big)^{\kappa'/2}\big(\ee{-\kappa'\Delta V(w_\ell)}+\ee{-\frac{\kappa'}{2}\Delta V(w_\ell)}\big)\1{\sigma_A<\tauh_1}\big]\nonumber\\
&≤C'_Z 2^{\kappa'}(\eps x)^{-\kappa'}\Eh\big[\sum_{\ell≥\sigma_A+1}(\ell-\sigma_A)^{2\kappa'}\big(\ty^A(w_{\ell-1})\big)^{\kappa'/2}\1{\sigma_A<\tauh_1}\big]\times\Ebh\big[\big(\ee{-\kappa'\Delta V(w_1)}+\ee{-\frac{\kappa'}{2}\Delta V(w_1)}\big)\big],
\end{align}
by the branching property of the environment. Thanks to \eqref{eq:majtaualpha} and the many-to-one lemma, we finally obtain
\begin{align}\label{eq:longdiff2}
f_A(x)&≤C'_ZC_\alpha 2^{\kappa'}(\eps x)^{-\kappa'}\Eh\big[\big(\ty^A(w_{\sigma_A})\big)^{\kappa'/2}\1{\sigma_A<\tauh_1}\big](\psi(\kappa)+\psi(1+\frac{\kappa}{2}))\nonumber\\
&≤CA^{-\kappa'/2} x^{-\kappa'} \Eh\big[\big(\ty^A(w_{\sigma_A})\big)^{\kappa'}\1{\sigma_A<\tauh_1}\big],
\end{align}
since $\ty^A(w_{\sigma_A})≥A$. Now for $A$ large enough, this is smaller than $\frac{\eps}{2} x^{-\kappa'}\Eh\big[\big(\ty^A(w_{\sigma_A})\big)^{\kappa'}\1{\sigma_A<\tauh_1}\big]$, which is what we wanted on $f_A(x)$. 

Let us now deal with $g_A(x)$. The equality $\sum_k 1/k^2=\pi^2/6$ and Markov's inequality yield
\begin{equation}\label{eq:majg}
g_A(x)≤(\frac{\eps}{2} x)^{-\kappa'}\Eh\Big[\sum_{k≥\sigma_A+1}(k-\sigma_A)^{2\kappa'}\big|\sum_{u\in\Omega(w_{k})}\big(\ty^A(u)- \ty^A(w_{w_{k-1}})\ee{-(V(u)-V(w_{\sigma_A}))}\big)Z_u\big|^{\kappa'}\1{\sigma_A<\tauh_1}\Big]. 
\end{equation}
As for any $k>\sigma_A$, conditionally on $\ty^A(w_{k-1})$ and $\V$, Lemma~\ref{lemma:lawbetaconseq} ensures that the $(\ty^A(u))_{u\in\Omega(w_k)}$ are uncorrelated and have variances $(\ty^A(w_{k-1})(\ee{-\Delta V(u)}+\ee{-2\Delta V(u)}))_{u\in\Omega(w_{k})}$, we have (after applying Jensen's inequality)
\begin{align}
\Eh^\V\Big[ \big|&\sum_{u\in\Omega(w_k)}\big(\ty^A(u)-\ty^A(w_{k-1})\ee{-\Delta V(u)}\big)Z_u\big|^{\kappa'}\Big]\nonumber\\
&≤\Eh^\V\Big[\Eh^\V\Big[\big|\sum_{u\in\Omega(w_k)}\big(\ty^A(u)-\ty^A(w_{k-1})\ee{-\Delta V(u)}\big)Z_u \big|^2\ \Big| \ty^A(w_{k-1}),(Z_u)_u \Big]^{\kappa'/2}\Big]\nonumber\\
&=\Big[\Eh^\V\Big[\ty^A(w_{k-1})\Big(\sum_{u\in\Omega(w_k)}(\ee{-\Delta V(u)}+\ee{-2\Delta V(u)})Z_u^2\Big)^{\kappa'/2}\Big], 
\end{align}
and we can conclude in a similar way as we did for $f_A(x)$. 
\end{proof}

\begin{lemma}\label{lemma:negspineabove}
For all $\eps>0$, there exists an $A_0$ such that for any $A>A_0$ we have for all $x≥1$
\begin{equation*}
\Ph\Big( \sum_{k=\sigma_A+1}^{\tauh_1}\sum_{u\in\Omega(w_k)}\big(\ty(u)-\ty^A(u)\big)W^u_\infty >\eps x,\ \sigma_A<\tauh_1\Big)≤\eps x^{-\kappa'}K_A. 
\end{equation*}
\end{lemma}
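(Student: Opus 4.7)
The plan is to follow the strategy of Lemma~\ref{lemma:final}, adapted to the ``walks-from-above'' contribution. By Markov's inequality with exponent $\kappa'=\kappa-1\in(0,1]$, it suffices to show that
\begin{equation*}
\Eh\bigl[T^{\kappa'}\1{\sigma_A<\tauh_1}\bigr]\leq \eps^{1+\kappa'}\Eh\bigl[(\ty^{\sigma_A}(w_{\sigma_A}))^{\kappa'}\1{\sigma_A<\tauh_1}\bigr]
\end{equation*}
for $A$ large, where $T$ denotes the random variable inside the probability in the statement.

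First I would decompose $T$ by the launching position of each walk above $\sigma_A$: $T=\sum_{i=\sigma_A}^{\tauh_1-1}S_i$, where $S_i$ aggregates the contributions of the two independent walks $X^{1,w_i}$ and $X^{2,w_i}$ to the weighted sum over brothers of the spine. The crucial identity is $\Eh^\V[\ty^j_i(u)]=\ee{-(V(u)-V(w_i))}$ for $u$ a strict descendant of $w_i$, a reversibility consequence already exploited in the proof of Lemma~\ref{lemma:final}. Combined with the negative-binomial-type second-moment estimate for edge local times (the analogue of the computation of $\Eh[|\ty^{\sigma_A}(w_l)-\ty^{\sigma_A}(w_{l-1})\ee{-(V(w_l)-V(w_{l-1}))}|^2]$ in that proof), Jensen's inequality with exponent $2$ yields
\begin{equation*}
\Eh^\V\bigl[(\ty^1_i(u)+\ty^2_i(u))^{\kappa'}\bigr]\leq C\bigl(\ee{-\frac{\kappa'}{2}(V(u)-V(w_i))}+\ee{-\kappa'(V(u)-V(w_i))}\bigr).
\end{equation*}

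Using subadditivity $(a+b)^{\kappa'}\leq a^{\kappa'}+b^{\kappa'}$ valid for $\kappa'\leq 1$, and pulling out the off-spine weights $(W^u_\infty)^{\kappa'}$ (which have uniform $\Pbh$-expectation equal to $\Eb[(W_\infty)^{\kappa'}]<\infty$, since $\kappa'<\kappa$, by Liu's theorem~\cite{liu}), the problem reduces to controlling environment sums via the many-to-one lemma (Lemma~\ref{lemma:many-to-one-environnement}). The key input is $\psi(1+\tfrac{\kappa'}{2})<1$, which follows from strict convexity of $\psi$ together with $\psi(1)=\psi(\kappa)=1$ and $1+\tfrac{\kappa'}{2}\in(1,\kappa)$; this is what makes the resulting telescoping sums geometric and convergent. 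The restriction to $\{i<\tauh_1\}$ is handled cleanly by observing that $\{\tauh_1>i\}$ depends only on walks at $w_j$ for $j<i$ (since $\ty(w_k)$ is built only from walks at $w_{k'}$ with $k'<k$), hence is independent of the walk at $w_i$. Iterating the propagation identity $\Eh^\V[\ty^{\sigma_A}(w_l)\mid\ty^{\sigma_A}(w_{l-1})]=\ty^{\sigma_A}(w_{l-1})\ee{-(V(w_l)-V(w_{l-1}))}$ used in the proof of Lemma~\ref{lemma:final} then produces a prefactor of the form $\Eh\bigl[(\ty^{\sigma_A}(w_{\sigma_A}))^{\kappa'/2}\1{\sigma_A<\tauh_1}\bigr]$.

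To conclude, on $\{\sigma_A<\tauh_1\}$ one has $\ty^{\sigma_A}(w_{\sigma_A})\geq A-1$, so $(\ty^{\sigma_A}(w_{\sigma_A}))^{\kappa'/2}\leq (A-1)^{-\kappa'/2}(\ty^{\sigma_A}(w_{\sigma_A}))^{\kappa'}$; the resulting factor $A^{-\kappa'/2}$ can be made smaller than any prescribed $\eps^{1+\kappa'}$ for $A$ large, which combined with Markov yields the desired inequality. The main obstacle will be the delicate splitting of the $\kappa'$-moment via Jensen with exponent~$2$ in order to harvest the strict inequality $\psi(1+\kappa'/2)<1$: a direct application of Jensen with exponent~$\kappa'$ would yield only the boundary quantity $\psi(1+\kappa')=\psi(\kappa)=1$ and cause the environment sums to diverge, mirroring the same technical trick used to control $f_A$ and $g_A$ in the proof of Lemma~\ref{lemma:final}.
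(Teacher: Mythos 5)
There is a genuine gap in your approach, and it stems from trying to control a $\kappa'$-moment that is in fact infinite. After your subadditivity step, you must in particular control sums of the type $\sum_{k>i}\Eh\big[(\ty^1_i(w_{k-1}))^{\kappa'}\big(\sum_{u\in\Omega(w_k)}\ee{-(V(u)-V(w_{k-1}))}Z_u\big)^{\kappa'}\big]$. Your Jensen-with-exponent-2 trick handles the \emph{fluctuation} of $\ty^1_i$ around its conditional mean (giving the favourable rate $\psi(1+\tfrac{\kappa'}{2})<1$), but it does nothing to the \emph{mean} itself: $\Eh^\V[\ty^1_i(w_{k-1})]$ propagates like $\ee{-(V(w_{k-1})-V(w_i))}$, so after raising to the $\kappa'$-th power the contribution behaves like $\ee{-\kappa'(V(w_{k-1})-V(w_i))}$, whose $\Ebh$-expectation is $\psi(1+\kappa')^{k-1-i}=\psi(\kappa)^{k-1-i}=1$. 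Summing over $k$ therefore diverges. This is not a technical artefact: the perpetuity $\sum_{k\geq1}\ty^1_0(w_{k-1})\sum_{u\in\Omega(w_k)}\ee{-(V(u)-V(w_{k-1}))}Z_u$ has tail $\asymp x^{-\kappa'}$ by Kesten's renewal theorem, so its $\kappa'$-moment genuinely is $+\infty$, and a Markov-plus-$\kappa'$-moment strategy on $T$ cannot close. This is precisely why the paper's proof never computes a $\kappa'$-moment of the full sum; it performs a union bound over the launching index $i$ (with the weights $(i-\sigma_A+1)^{-2}$), proves a \emph{tail} estimate $\Ph(\sum_{k}\sum_{u}\ty^1_0(u)Z_u>x)\leq Cx^{-\kappa'}$ by splitting off the fluctuation part (your $\kappa'/2$ trick) from the perpetuity part, where the latter is treated via Abel summation and Theorem~B of~\cite{kesten} — a tail bound, not a moment bound — and then sums the tails.

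A secondary misconception: you expect the factor $\Eh[(\ty^{\sigma_A}(w_{\sigma_A}))^{\kappa'/2}\1{\sigma_A<\tauh_1}]$ to arise by ``iterating the propagation identity'', as in the proof of Lemma~\ref{lemma:final}. But the walks launched at $w_i$ for $i\geq\sigma_A$ start afresh and carry no initial local time $\ty^{\sigma_A}(w_{\sigma_A})$ to propagate. In the paper's proof, the factor $\Eh[(\ty^{\sigma_A}(w_{\sigma_A}))^{\kappa'}\1{\sigma_A<\tauh_1}]$ comes from counting the \emph{number of walks}: the range $i\in\brint{\sigma_A;\tauh_1-1}$ has $\tauh_1-\sigma_A$ terms, and by the strong Markov property together with~(\ref{eq:taulog}), $\Eh_{\ty(w_{\sigma_A})}[\tauh_1^{2\kappa'+1}]\leq C_1\ln^{1+2\kappa'}(1+\ty(w_{\sigma_A}))$; the smallness in $A$ then comes from $\ln^{1+2\kappa'}(1+A)/A^{\kappa'}\to0$, not from a spare $A^{-\kappa'/2}$ as in Lemma~\ref{lemma:final}. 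These two mechanisms are genuinely distinct, and your sketch conflates them.
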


\begin{proof}

Just as in the proof of the previous lemma (Lemma~\ref{lemma:final}), we intend to prove a little more than required. Denoting by $(Z_u)_{u\in\Omega}$ (with $\Omega :=\bigcup_{k≥1}\Omega(w_k)$) a family of random variables independent of the walk and the environment on the spine and the siblings of the spine $\big((\ty(w_k),V(w_k))_{k≥0},(\ty(u),V(u))_{u\in\Omega}\big)$ with a finite moment of order $\kappa'$, we want to prove that for all $\eps>0$, there exists an $A_0$ such that for any $A>A_0$, for all $x≥1$,
\begin{equation}\label{eq:wanttoshowneg}
h_A(x):=\Ph\Big( \sum_{k=\sigma_A+1}^{\tauh_1}\sum_{u\in\Omega(w_k)}\big(\ty(u)-\ty^A(u)\big)Z_u >\eps x,\ \sigma_A<\tauh_1\Big)≤\eps x^{-\kappa'}K_A . 
\end{equation}
As explained in the proof of Lemma~\ref{lemma:final}, the properties of $(W^u_\infty)_{u\in\Omega}$ are such that proving~(\ref{eq:wanttoshowneg}) is enough to prove the lemma. 

So let $\eps>0$ be fixed. Recall the definition of $\ty_i^1$ and $\ty_i^2$ from Subsection~\ref{subsec:lawtb}. Notice that on $\{\sigma_A<\tauh_1\}$, if $k\in\brint{\sigma_A+1;\tauh_1}$, then for any $u\in\Omega(w_k)$
\begin{equation*}
\big(\ty(u)-\ty^A(u)\big)=\sum_{i=\sigma_A}^{\tauh_1-1}(\ty^1_i(u)+\ty^2_i(u)). 
\end{equation*}
This yields 
\begin{align*}
\sum_{k=\sigma_A+1}^{\tauh_1}\sum_{u\in\Omega(w_k)}\big(\ty(u)-\ty^A(u)\big)Z_u&=\sum_{i=\sigma_A}^{\tauh_1-1}\Big(\sum_{k=\sigma_A+1}^{\tauh_1}\sum_{u\in\Omega(w_k)}\big(\ty^1_i(u)+\ty^2_i(u)\big)Z_u\Big)\\
&=\sum_{i=\sigma_A}^{\tauh_1-1}\Big(\sum_{k≥i+1}\sum_{u\in\Omega(w_k)}\big(\ty^1_i(u)+\ty^2_i(u)\big)Z_u\Big), 
\end{align*}
where the last equality comes from the fact that $\ty^1_i(w_{k-1})=\ty^2_i(w_{k-1})=0$ for $k≤i$, as the walks launched on $w_i$ are killed when they reach $w_{i-1}$, and $\ty^1_i(w_{k-1})=\ty^2_i(w_{k-1})=0$ for $k≥\tauh_1$, as $\tauh_1$ is the first level such that none of the walk launched below reach it. Now as $\sum_{i=1}^\infty i^{-2}=\pi^2/6<2$, the union bound yields
\begin{align*}
h_A(x)&=\Ph\Big(\sum_{i=\sigma_A}^{\tauh_1-1}\Big(\sum_{k≥i+1}\sum_{u\in\Omega(w_k)}\big(\ty^1_i(u)+\ty^2_i(u)\big)Z_u\Big)>\eps x6\pi^{-2}\sum_{i≥\sigma_A}(i-\sigma_A+1)^{-2}\Big)\\
&≤\sum_{i≥1}\Ph\Big(\1{\sigma_A≤i<\tauh_1}\sum_{k≥i+1}\sum_{u\in\Omega(w_k)}\ty^1_i(u)Z_u>\frac{\eps}{4} x (i-\sigma_A+1)^{-2}\Big)\\
&\hspace{0.1\textwidth}+\Ph\Big(\1{\sigma_A≤i<\tauh_1}\sum_{k≥i+1}\sum_{u\in\Omega(w_k)}\ty^2_i(u)Z_u>\frac{\eps}{4} x (i-\sigma_A+1)^{-2}\Big)
\end{align*}
But $\ty^1_i$, $\ty^1_i$ and $(Z_u)_{u\in\Omega(w_k)}$ (for $k>\sigma_A$) only depend on the walks launched above $w_i$ and on the environment above $w_i$, when the event $\{\sigma_A≤i<\tauh_1\}$ only depends on the walks launched strictly below $w_i$ and on the environment of the spine below $w_i$. Thus $\ty^1_i$ and $\ty^2_i$ and the $(Z_u)_{u\in\Omega(w_k)}$ are independent of the event $\{\sigma_A≤i<\tauh_1\}$, and since all the $\sum_{k≥i+1}\sum_{u\in\Omega(w_k)}\ty^1_i(u)Z_u$ and $\sum_{k≥i+1}\sum_{u\in\Omega(w_k)}\ty^2_i(u)Z_u$ have the distribution of $\sum_{k≥1}\sum_{u\in\Omega(w_k)}\ty^1_0(u)Z_u$ we get (with the change of variable $j=i-\sigma_A$) that 
\begin{equation}\label{eq:decompYs}
h_A(x)≤\sum_{j≥1}2\Ph(1≤j<\tauh_1-\sigma_A)\Ph\Big(\sum_{k≥1}\sum_{u\in\Omega(w_k)}\ty^1_0(u)Z_u>\frac{\eps}{4} x j^{-2}\Big). 
\end{equation}
Notice that now, it would be enough to conclude that there exists a constant $C$ such that for $x≥1$, 
\begin{equation}\label{eq:majsumeW}
\Ph\Big(\sum_{k≥1}\sum_{u\in\Omega(w_k)}\ty^1_0(u)Z_u>x\Big)≤Cx^{-\kappa'}. 
\end{equation}
Indeed, if this inequality was satisfied, equation~(\ref{eq:decompYs}) would yield
\begin{align*}
h_A(x)&≤C(\frac{4}{\eps})^{\kappa'}x^{-\kappa'}\sum_{j=1}^{\infty}2\Ph(1≤j<\tauh_1-\sigma_A)j^{2\kappa'}\\
&=C(\frac{4}{\eps})^{\kappa'}x^{-\kappa'}\Eh[(\tauh_1-\sigma_A)^{2\kappa'+1}\1{\tauh_1≥\sigma_A}]\\
&= C(\frac{4}{\eps})^{\kappa'}x^{-\kappa'}\Eh\big[\Eh_{\ty(w_{\sigma_A})}[(\tauh_1)^{2\kappa'+1}]\1{\tauh_1≥\sigma_A}\big]\\
&≤C(\frac{4}{\eps})^{\kappa'}x^{-\kappa'}\Eh[C_1\ln^{1+2\kappa'}(1+\ty(w_{\sigma_A}))\1{\tauh_1≥\sigma_A}]\\
&≤CC_1 \frac{\ln^{1+2\kappa'}(1+A)}{A^{\kappa'}}(\frac{4}{\eps})^{\kappa'}x^{-\kappa'}\Eh[\big(\ty(w_{\sigma_A})\big)^{\kappa'}\1{\tauh_1≥\sigma_A}], 
\end{align*}
where we used the branching property in the second equality, and equation~(\ref{eq:taulog}) in the last but one inequality. Now using the fact that $\ty^A(w_{\sigma_A})=\ty(w_{\sigma_A})+1≥\frac{1}{2}\ty(w_{\sigma_A})$, this last quantity could be made smaller than $\eps x^{-\kappa'}\Eh[\big(\ty^A(w_{\sigma_A})\big)^{\kappa'}\1{\tauh_1≥\sigma_A}]$ for $A$ large enough, thus proving the lemma. So let us prove that~(\ref{eq:majsumeW}) stands. We have
\begin{align*}
\Ph\Big(\sum_{k≥1}\sum_{u\in\Omega(w_k)}\ty^1_0(u)Z_u>x\Big)≤&\underbrace{\Ph\Big(\sum_{k≥1}\ty^1_0(w_{k-1})\sum_{u\in\Omega(w_k)}\ee{-(V(u)-V(w_{k-1}))}Z_u>\frac{x}{2}\Big)}_{=:f(x)}\nonumber\\
&+\underbrace{\Ph\Big(\sum_{k≥1}\sum_{u\in\Omega(w_k)}|\ty^1_0(u)-\ty^1_0(w_{k-1})\ee{-(V(u)-V(w_{k-1}))}|Z_u>\frac{x}{2}\Big)}_{=:g(x)}. 
\end{align*}
We denoted by $f(x)$ and $g(x)$ these last two quantities. Let us show that they are both smaller than $Cx^{-\kappa'}$ for a certain $C$. 

Let us start with $f(x)$. Recall the definition of $(Z_{w_k})_{k≥0}$ from~(\ref{eq:Zwk}). For any $k≥0$, noticing that $Z_{w_k}=\big(\sum_{u\in\Omega(w_{k+1})}\ee{-(V(u)-V(w_k))}Z_u\big)+\ee{-(V(w_{k+1})-V(w_k))}Z_{w_{k+1}}$, we get
\begin{align*}
\sum_{k≥1}\ty^1_0(w_{k-1})\sum_{u\in\Omega(w_k)}\ee{-(V(u)-V(w_k))}Z_u&=\sum_{k≥1}\ty^1_0(w_k) \big(Z_{w_k}-\ee{-(V(w_{k+1})-V(w_k))}Z_{w_{k+1}} \big)\\
&=\sum_{k≥1}\big(\ty^1_0(w_{k})-\ty^1_0(w_{k-1})\ee{-(V(w_{k})-V(w_{k-1}))} \big)Z_{w_k}, 
\end{align*}
(we recall that $\ty^1_0(w_0)=0$). 
This yields by the union bound (and using the fact that $\sum_{k≥1}k^{-2}=\pi^2/6<2$)
\begin{align*}
f(x)&=\Ph\Big(\sum_{k≥1}\big(\ty^1_0(w_{k})-\ty^1_0(w_{k-1})\ee{-(V(w_{k})-V(w_{k-1}))} \big)Z_{w_k}>x6\pi^{-2}\sum_{k≥1}k^{-2}\Big)\nonumber\\
&≤\sum_{k≥1}\Ph\Big(\big(\ty^1_0(w_{k})-\ty^1_0(w_{k-1})\ee{-(V(w_{k})-V(w_{k-1}))} \big)Z_{w_k}>\frac{1}{2}xk^{-2}\Big)\nonumber\\
\end{align*}
and we can proceed as for $f_A(x)$ in~\eqref{eq:longproof1ineg},~\eqref{eq:longdiff} and~\eqref{eq:longdiff2}, with $A=1$ (so $\ty^A(w_{\sigma_A})=1$). Likewise, the domination of $g(x)$ can be made similarly to that of $g_A(x)$ in~\eqref{eq:majg} and what follows. 
\end{proof}

\begin{lemma}\label{lemma:L1toW}
For all $\eps>0$, there exists an $A_0$ such that for any $A>A_0$ we have for $x$ large enough
\begin{equation*}
\Ph\Big(\big|\sum_{k=\sigma_A+1}^{\tauh_1}\sum_{u\in\Omega(w_k)}\ty(u)W^{u}_\infty-\sum_{k=\sigma_A+1}^{\tauh_1}\sum_{u\in\Omega(w_{k})}L^1_u\big|>\eps x,\ \sigma_A<\tauh_1\Big)≤\eps x^{-\kappa'}K_A. 
\end{equation*}
\end{lemma}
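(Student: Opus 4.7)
The strategy mirrors that of Lemmas~\ref{lemma:final} and~\ref{lemma:negspineabove}: by Markov's inequality with exponent $\kappa'$ and sub-additivity (valid since $\kappa'\leq 1$), the task reduces to bounding
\[
\Eh\Big[\sum_{k=\sigma_A+1}^{\tauh_1}\sum_{u\in\Omega(w_k)}\big|L^1_u-\ty(u)W^u_\infty\big|^{\kappa'}\1{\sigma_A<\tauh_1}\Big]\leq\eps^{1+\kappa'}\Eh\big[\big(\ty^{\sigma_A}(w_{\sigma_A})\big)^{\kappa'}\1{\sigma_A<\tauh_1}\big].
\]
By the branching property and the construction of Subsection~\ref{subsec:lawtb}, conditionally on $\ty(u)=n$ the subtree rooted at $u$ (with its local times) has the law of $(\Tb,\beta)$ under $\P_n$, so $(L^1_u,W^u_\infty)$ is distributed as $(L^1,W_\infty)$ under $\P_n$. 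Fix $\alpha\in(0,\kappa-1)$ with $\kappa'<1+\alpha$: Proposition~\ref{prop:Winfini} yields $g(n):=\E_n[|L^1/n-W_\infty|^{1+\alpha}]\to 0$ as $n\to\infty$, while $\sup_n g(n)<\infty$ thanks to~(\ref{eq:majLjalpha}) and the moments of $W_\infty$ provided by $\bf (H_\kappa)$. Jensen's inequality with the concave exponent $\kappa'/(1+\alpha)$ then gives, with $h(n):=g(n)^{\kappa'/(1+\alpha)}$ bounded by some $H$ and vanishing at infinity,
\[
\E\big[|L^1_u-\ty(u)W^u_\infty|^{\kappa'}\,\big|\,\ty(u)=n\big]\leq n^{\kappa'}h(n).
\]

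Next, fix a threshold $B$ and split each summand according to $\ty(u)>B$ or $\ty(u)\leq B$. In the regime $\ty(u)>B$ one has $h(\ty(u))\leq\eta:=\sup_{n>B}h(n)$, which can be rendered arbitrarily small by taking $B$ large, and the contribution is bounded by $\eta\,\Eh[\sum_{k=\sigma_A+1}^{\tauh_1}\sum_{u\in\Omega(w_k)}\ty(u)^{\kappa'}\1{\sigma_A<\tauh_1}]$. In the regime $\ty(u)\leq B$ the summand is at most $HB^{\kappa'}$, so the contribution is at most $HB^{\kappa'}\,\Eh[\sum_{k=\sigma_A+1}^{\tauh_1}|\Omega(w_k)|\1{\sigma_A<\tauh_1}]$, which is finite by many-to-one along the spine (Lemma~\ref{lemma:many-to-one-environnement}), $\bf (H_\kappa)$ and~(\ref{eq:taulog}). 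A short computation in the spirit of~(\ref{eq:majsumkappas}) shows this small-$\ty(u)$ contribution is of order $\ln(1+A)\,\Ph(\sigma_A<\tauh_1)$, whereas $\Eh[\ty^{\sigma_A}(w_{\sigma_A})^{\kappa'}\1{\sigma_A<\tauh_1}]$ is at least $A^{\kappa'}\Ph(\sigma_A<\tauh_1)$; it is therefore absorbed by taking $A_0$ large.

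The principal obstacle is the "regular" term: one must produce a constant $C$ independent of $A$ with
\[
\Eh\Big[\sum_{k=\sigma_A+1}^{\tauh_1}\sum_{u\in\Omega(w_k)}\ty(u)^{\kappa'}\1{\sigma_A<\tauh_1}\Big]\leq C\,\Eh\big[\big(\ty^{\sigma_A}(w_{\sigma_A})\big)^{\kappa'}\1{\sigma_A<\tauh_1}\big].
\]
The plan is to decompose $\ty(u)=\ty^{\sigma_A}(u)+(\ty(u)-\ty^{\sigma_A}(u))$ and use the sub-additivity of $x\mapsto x^{\kappa'}$. The $\ty^{\sigma_A}(u)$-piece is controlled by conditioning on the environment and on $\ty^{\sigma_A}(w_{k-1})$ and applying Jensen to the negative-multinomial marginals, giving $\E[\ty^{\sigma_A}(u)^{\kappa'}\mid V,\ty^{\sigma_A}(w_{k-1})]\leq (\ty^{\sigma_A}(w_{k-1})\ee{-(V(u)-V(w_{k-1}))})^{\kappa'}$; summing over $u\in\Omega(w_k)$ and over $k$, Lemma~\ref{lemma:many-to-one-environnement} together with $\bf (H_\kappa)$ reduces the expression to a convergent geometric series of the type encountered in~(\ref{eq:majpsik})--(\ref{eq:majsumkappas}), multiplied by $\ty^{\sigma_A}(w_{\sigma_A})^{\kappa'}$. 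The $(\ty(u)-\ty^{\sigma_A}(u))$-piece, corresponding to walks launched above $w_{\sigma_A}$, is handled via the $(\ty^1_i,\ty^2_i)$ decomposition of Lemma~\ref{lemma:negspineabove}, reducing it to the very same spine estimates. Combining, one selects first $B$ large (so that $\eta C/\eps^{\kappa'}<\eps/2$) and then $A_0$ large (to absorb the small-$\ty(u)$ contribution), which establishes the bound for every $A>A_0$ and $x$ sufficiently large.
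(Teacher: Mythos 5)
Your overall shape is the same as the paper's (Proposition~\ref{prop:Winfini} to say $L^1_u/\ty(u)\approx W^u_\infty$, a threshold cut, the spine estimates of Lemmas~\ref{lemma:final} and~\ref{lemma:negspineabove}), but the key step you lean on does not go through. You claim a bound
$\Eh\big[\sum_{k=\sigma_A+1}^{\tauh_1}\sum_{u\in\Omega(w_k)}\ty(u)^{\kappa'}\1{\sigma_A<\tauh_1}\big]\leq C\,\Eh\big[\big(\ty^{\sigma_A}(w_{\sigma_A})\big)^{\kappa'}\1{\sigma_A<\tauh_1}\big]$
with a constant $C$ independent of $A$, obtained by Jensen on the negative-multinomial marginals and "a convergent geometric series of the type encountered in~(\ref{eq:majpsik})--(\ref{eq:majsumkappas})." This is precisely where it breaks: iterating your Jensen step along the spine gives $\E[\ty^{\sigma_A}(w_{k-1})^{\kappa'}\mid V,\ty^{\sigma_A}(w_{\sigma_A})]\leq\ty^{\sigma_A}(w_{\sigma_A})^{\kappa'}\ee{-\kappa'(V(w_{k-1})-V(w_{\sigma_A}))}$, and then Lemma~\ref{lemma:many-to-one-environnement} turns the exponential into $\Eb\big[\sum_{|u|=1}\ee{-(1+\kappa')V(u)}\big]^{k-1-\sigma_A}=\psi(\kappa)^{k-1-\sigma_A}=1$ (by $\bf (H_\kappa)$), so the series diverges. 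The convergent series in~(\ref{eq:majpsik})--(\ref{eq:majsumkappas}) have base $\psi(1+\kappa'/2)<1$ only because they are applied to the \emph{centered} increments $\ty^{\sigma_A}(w_l)-\ty^{\sigma_A}(w_{l-1})\ee{-(V(w_l)-V(w_{l-1}))}$, whose conditional second moment is \emph{linear}, not quadratic, in $\ty^{\sigma_A}(w_{l-1})$; that is what produces the exponent $\kappa'/2$ and leaves $\psi(1+\kappa'/2)<1$. Your raw $\ty(u)^{\kappa'}$ has no such cancellation, and the Lyapunov estimate~(\ref{eq:majtaualpha}) behind such sums requires $\alpha<\kappa'$ strictly for $\psi(1+\alpha)<1$; at $\alpha=\kappa'$ you sit exactly on the boundary, which costs at least a factor diverging with $A$ and defeats your fixed-$B$ choice of $\eta$.

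The paper avoids ever bounding $\Eh[\sum\ty(u)^{\kappa'}\cdots]$. It first replaces $|W^u_\infty-L^1_u/\ty(u)|$ by $a_{\ty(u)}Y_u$ via the stochastic-domination Lemma~\ref{lemma:stochastic}, then decomposes the \emph{probability} (not a single Markov inequality) into three pieces $f_{N,p}$, $g_{N,p}$, $h_p$, indexed by the threshold $N$ on $\ty(u)$ \emph{and} a second cut-off $p$ selecting the set $\widehat{\cal{E}}^p_{\Omega(w_k)}$ of the $p$ siblings of smallest potential. For $u\in\widehat{\cal{E}}^p$ there are at most $p$ siblings per generation, so the bounded term $g_{N,p}$ is $o(x^{-\kappa'})$ with a constant depending only on $p,N$; for $u\notin\widehat{\cal{E}}^p$ the tail is governed by Goldie's implicit renewal theorem and its constant $C_p$ tends to $0$ as $p\to\infty$; and the large-$\ty(u)$ piece $f_{N,p}$ uses~(\ref{eq:lemmafinal}) and Theorem~B of Kesten together with the small factor $a_N$. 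You need both thresholds as well as the centred-increment cancellation—the threshold on $\ty(u)$ alone cannot close the estimate.
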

\begin{proof}
Let $\eps>0$. Let $A≥1$ and $x≥1$. First, the union bound yields
\begin{align}\label{eq:firststatement}
\Ph\Big(\big|\sum_{k=\sigma_A+1}^{\tauh_1}\sum_{u\in\Omega(w_k)}\ty(u)W^{u}_\infty-&\sum_{k=\sigma_A+1}^{\tauh_1}\sum_{u\in\Omega(w_{k})}L^1_u\big|>\eps x,\ \sigma_A<\tauh_1\Big)\nonumber\\
≤\Ph\Big(&\sum_{k=\sigma_A+1}^{\tauh_1}\sum_{u\in\Omega(w_k)}\ty^A(u)\big|W^{u}_\infty-\frac{L^1_u}{\ty(u)}\big|>\eps x, \sigma_A<\tauh_1\Big)\nonumber\\
&+\Ph\Big(\sum_{k=\sigma_A+1}^{\tauh_1}\sum_{u\in\Omega(w_k)}\big(\ty(u)-\ty^A(u)\big)\big|W^{u}_\infty-\frac{L^1_u}{\ty(u)}\big|>\eps x, \sigma_A<\tauh_1\Big). 
\end{align}
Now according to Proposition~\ref{prop:Winfini}, we can apply Lemma~\ref{lemma:stochastic} (in the appendix) to the $|W^{u}_\infty-\frac{L^1_u}{\ty(u)}\big|$ for $u\in\Omega$: there exists a positive random variable $Y$ with a finite moment of order $1+\alpha$ (with $\alpha\in(0;\kappa')$) and a decreasing sequence $(a_n)_{n≥1}$ which tends to zero (with $a_0≤1$), such that for any $n$, $a_n Y$ is stochastically greater than $|W_\infty-\frac{L^1}{k}|$ under $\P_k$ for any $k≥n$. Let us set $(Y_u)_{u\in\Omega}$ a sequence of independent random variables of same law as $Y$, independent of the environment of the spine and the walk on the spine. This yields (using also the fact that the $(a_n)_{n≥1}$ are smaller than $1$) that the second quantity of~(\ref{eq:firststatement}) is smaller than
\begin{equation*}
\Ph\Big(\sum_{k=\sigma_A+1}^{\tauh_1}\sum_{u\in\Omega(w_k)}\big(\ty(u)-\ty^A(u)\big)Y_u>\eps x, \sigma_A<\tauh_1\Big), 
\end{equation*}
which according to~(\ref{eq:wanttoshowneg}) can be made smaller than $\eps x^{-\kappa'}\Eh\big[\big(\ty^A(w_{\sigma_A})\big)^{\kappa'}\1{\sigma_A<\tauh_1}\big]$ for $A$ large enough. Therefore, we just have to bound the first quantity of~(\ref{eq:firststatement}) to show the lemma. For the reasons explained above, we can write that
\begin{equation*}
\Ph\Big(\sum_{k=\sigma_A+1}^{\tauh_1}\sum_{u\in\Omega(w_k)}\ty^A(u)\big|W^{u}_\infty-\frac{L^1_u}{\ty(u)}\big|>\eps x, \sigma_A<\tauh_1\Big)≤\Ph\Big(\sum_{k=\sigma_A+1}^{\tauh_1}\sum_{u\in\Omega(w_k)}\ty^A(u)a_{\ty(u)}Y_u>\eps x, \sigma_A<\tauh_1\Big)
\end{equation*}
We will deal with this last quantity until the end of the proof, as it will be more convenient to bound as required. In order to deal with the case where $\#\{u\in\Omega(w_k)\}=\infty$, let us introduce for every $k≥1$, $p≥1$, 
\begin{equation*}
\widehat{\cal{E}}_{\Omega(w_k)}^p:=\{u\in\Omega(w_k), \#\{v\in\T\st \ee{-V(v)}≥\ee{-V(u)}\}<p \}, 
\end{equation*}
the set of the (at most) $p$ vertices of $\Omega(w_k)$ with lowest potential, which is finite (according to the second condition of $(\bf H_c)$).
Let $N≥1$, $p≥1$; discussing for each $u\in\Omega(w_k)$ ( with $k≥\sigma_A+1$) on whether $u\in\widehat{\cal{E}}_{\Omega(w_k)}^p$ or not, and then on whether $\ty(u)>N$ or not, we get by the union bound (and the fact that $(a_n)_{n≥1}≤1$, and $\ty^A≤\ty$)
\begin{align*}
&\Ph\Big(\sum_{k=\sigma_A+1}^{\tauh_1}\sum_{u\in\Omega(w_k)}\ty^A(u)a_{\ty(u)}Y_u>\eps x, \sigma_A<\tauh_1\Big)\\*
&≤\underbrace{\Ph\Big(\sum_{k=\sigma_A+1}^{\tauh_1}\sum_{u\in\widehat{\cal{E}}_{\Omega(w_k)}^p}\ty^A(u)a_N Y_u>\frac{\eps}{3} x, \sigma_A<\tauh_1\Big)}_{=:f_{N,p}(x)}+\underbrace{\Ph\Big(\sum_{k=\sigma_A+1}^{\tauh_1}\sum_{u\in\widehat{\cal{E}}_{\Omega(w_k)}^p}N Y_u>\frac{\eps}{3} x, \sigma_A<\tauh_1\Big)}_{=:g_{N,p}(x)}\\*
&+\underbrace{\Ph\Big(\sum_{k=\sigma_A+1}^{\tauh_1}\sum_{u\notin\widehat{\cal{E}}_{\Omega(w_k)}^p}\ty^A(u)Y_u>\frac{\eps}{3} x, \sigma_A<\tauh_1\Big)}_{=:h_p(x)}
\end{align*}
Therefore, to prove the lemma it suffices to show that there exist $p$ and $N$ such that $f_{N,p}(x)$, $g_{N,p}(x)$ and $h_p(x)$ can each be made smaller than $\eps x^{-\kappa'} \Eh\big[\big(\ty^A(w_{\sigma_A})\big)^{\kappa'}\1{\sigma_A<\tauh_1}\big]$ for $A$ and $x$ large enough. 

$\bullet$ Let us start with $h_p(x)$. We have that
\begin{align*}
h_p(x)≤&\Ph\Big(\big|\sum_{k=\sigma_A+1}^{\tauh_1}\sum_{u\notin\widehat{\cal{E}}_{\Omega(w_k)}^p}\big(\ty^A(u)-\ty^A(w_{\sigma_A})\ee{-(V(u)-V(w_{\sigma_A}))}\big)Y_u\big|>\frac{\eps}{6} x, \sigma_A<\tauh_1\Big)\nonumber\\
&+\Ph\Big(\ty^A(w_{\sigma_A})\sum_{k=\sigma_A+1}^{\tauh_1}\ee{-(V(w_{k-1})-V(w_{\sigma_A}))}\sum_{u\notin\widehat{\cal{E}}_{\Omega(w_k)}^p}\ee{-(V(u)-V(w_{k-1}))}Y_u>\frac{\eps}{6} x, \sigma_A<\tauh_1\Big)
\end{align*}
The first quantity of this last sum can be bounded by 
\begin{equation*}
\Ph\Big(\big|\sum_{k=\sigma_A+1}^{\tauh_1}\sum_{u\in\Omega(w_k)}\big(\ty^A(u)-\ty^A(w_{\sigma_A})\ee{-(V(u)-V(w_{\sigma_A}))}\big)Y_u\big|>\frac{\eps}{6} x, \sigma_A<\tauh_1\Big), 
\end{equation*}
which according to~(\ref{eq:lemmafinal}) can be bounded by $\frac{1}{2}\eps x^{-\kappa'} \Eh\big[\big(\ty^A(w_{\sigma_A})\big)^{\kappa'}\1{\sigma_A<\tauh_1}\big]$ for $A$ large enough (the $(Y_u)_{u\in\Omega}$ satisfying the same conditions as the $(Z_u)_{u\in\Omega}$). For the second quantity of this sum, notice that as for any $k≥1$
\begin{equation*}
\Ebh\Big[ \Big(\sum_{u\in\Omega(w_k)} \ee{-(V(u)-V(w_{k-1}))}Y_u\Big)^{\kappa'} \Big]≤\Ebh[Y_u]^{\kappa'}\Ebh\Big[ \Big(\sum_{\parent{u}=w_k} \ee{-(V(u)-V(w_{k-1}))}\Big)^{\kappa'} \Big]=\Ebh[Y_u]^{\kappa'}\Eb\Big[ \Big(\sum_{|u|=1} \ee{-V(u)}\Big)^{\kappa} \Big],
\end{equation*}
which is finite. We used Jensen's inequality first, and then the branching property and the many-to-one lemma (Lemma~\ref{lemma:many-to-one-environnement}). This allows us to apply Theorem~B of~\cite{kesten} to the random variable~$\sum_{k≥\sigma_A+1}\ee{-(V(w_{k-1})-V(w_{\sigma_A}))}\sum_{u\notin\widehat{\cal{E}}_{\Omega(w_k)}^p}\ee{-(V(u)-V(w_{k-1}))}Y_u$: there exists a constant ${C_p\in(0,\infty)}$ such that 
\begin{equation*}
\Ph\Big( \sum_{k≥\sigma_A+1}\ee{-(V(w_{k-1})-V(w_{\sigma_A}))}\sum_{u\notin\widehat{\cal{E}}_{\Omega(w_k)}^p}\ee{-(V(u)-V(w_{k-1}))}Y_u>x \Big)\ssim{x\to\infty} C_p x^{-\kappa'}, 
\end{equation*}
which implies that there exist positive constants $K_p$, $K_p'$ such that 
\begin{equation*}
\Ph\Big( \sum_{k≥\sigma_A+1}\ee{-(V(w_{k-1})-V(w_{\sigma_A}))}\sum_{u\notin\widehat{\cal{E}}_{\Omega(w_k)}^p}\ee{-(V(u)-V(w_{k-1}))}Y_u>x \Big)≤K_p\1{x≤K_p'}+C_p x^{-\kappa'}. 
\end{equation*}
Consequently, 
\begin{align*}
&\Ph\Big(\ty^A(w_{\sigma_A})\sum_{k=\sigma_A+1}^{\tauh_1}\ee{-(V(w_{k-1})-V(w_{\sigma_A}))}\sum_{u\notin\widehat{\cal{E}}_{\Omega(w_k)}^p}\ee{-(V(u)-V(w_{k-1}))}Y_u>\frac{\eps}{6} x, \sigma_A<\tauh_1\Big)\\
&≤\Eh\Big[ \Pbh\Big(\sum_{k≥\sigma_A+1}\ee{-(V(w_{k-1})-V(w_{\sigma_A}))}\sum_{u\notin\widehat{\cal{E}}_{\Omega(w_k)}^p}\ee{-(V(u)-V(w_{k-1}))}Y_u>\frac{\eps}{6\ty^A(w_{\sigma_A})} x\Big)\1{\sigma_A<\tauh_1} \Big]\\
&≤K_p\Ph\big(\ty^A(w_{\sigma_A})≥\frac{x}{6K_p'},\sigma_A<\tauh_1\big)+C_p \Eh\big[\big(\ty^A(w_{\sigma_A})\big)^{\kappa'}\1{\sigma_A<\tauh_1}\big] x^{-\kappa'}. 
\end{align*}
Now, as according to Corollary~4.3 of~\cite{goldie}, 
\begin{equation*}
C_p≤\frac{1}{\kappa\Eb[\sum_{|u|=1}-V(u)\ee{-\kappa V(u)}]}\Ebh\Big[\big(\sum_{u\notin\widehat{\cal{E}}_{\Omega(w_1)}^p}\ee{-(V(u)-V(w_{k-1}))}Y_u\big)^{\kappa'}\Big]
\end{equation*}
we have (by Lebesgue's dominated convergence theorem) that for $p$ large enough, for any $A>0$, this last quantity is smaller than
\begin{align*}
K_p\Pbh\big(\ty^A&(w_{\sigma_A})≥\frac{x}{6K_p'},\sigma_A<\tauh_1\big)+\frac{\eps}{2} x^{-\kappa'}\Eh\big[\big(\ty^A(w_{\sigma_A})\big)^{\kappa'}\1{\sigma_A<\tauh_1}\big] \\
&≤K_p(6Kp')^{\kappa'}x^{-\kappa'}\Eh\big[\big(\ty^A(w_{\sigma_A})\big)^{\kappa'}\1{\sigma_A<\tauh_1}\1{x≤Kp'\ty^A(w_{\sigma_A})}\big]+\frac{\eps}{2}x^{-\kappa'} \Eh\big[\big(\ty^A(w_{\sigma_A})\big)^{\kappa'}\1{\sigma_A<\tauh_1}\big], 
\end{align*}
by Markov's inequality. This yields that for $x$ large enough, we have
\begin{equation}\label{eq:fixp}
h_p(x)≤\eps x^{-\kappa'}\Eh\big[\big(\ty^A(w_{\sigma_A})\big)^{\kappa'}\1{\sigma_A<\tauh_1}\big]. 
\end{equation}

$\bullet$ Let us now deal with $f_{N,p}(x)$ for $p$ fixed and $x$ large enough such that~(\ref{eq:fixp}) holds. We have 
\begin{align*}
f_{N,p}(x)≤\Ph\Big(\big|\sum_{k=\sigma_A+1}^{\tauh_1}\sum_{u\in\Omega(w_k)}\ty^A(u)Y_u-\ty^A(w_{\sigma_A})Y_{w_{\sigma_A}}\big|>\frac{\eps}{6a_N} x, \sigma_A<\tauh_1\Big)&\\
+\Ph\Big(\ty^A(w_{\sigma_A})Y_{w_{\sigma_A}}>\frac{\eps}{6a_N} x, \sigma_A<\tauh_1\Big)&, 
\end{align*}
where we recall that following the notation introduced in the proofs of Lemmas~\ref{lemma:final} and~\ref{lemma:negspineabove}, we denoted by $Y_{w_{\sigma_A}}$ the random variable $\sum_{k≥\sigma_A+1}\ee{-(V(w_{k-1})-V(w_{\sigma_A}))}\sum_{u\in\Omega(w_{k})}\ee{-(V(u)-V(w_{k-1}))}Y_u$. Now equation~(\ref{eq:lemmafinal}) and Theorem~B of~\cite{kesten} applied to $Y_{w_{\sigma_A}}$ yield that for $A$ large enough
\begin{equation*}
f_{N,p}(x)≤\eps (6a_N)^{\kappa'}x^{-\kappa'}\Eh\big[\big(\ty^A(w_{\sigma_A})\big)^{\kappa'}\1{\sigma_A<\tauh_1}\big]+C_Y (6a_N)^{\kappa'}x^{-\kappa'}\Eh\big[\big(\ty^A(w_{\sigma_A})\big)^{\kappa'}\1{\sigma_A<\tauh_1}\big], 
\end{equation*}
where $C_Y$ is to $Y_{w_{\sigma_A}}$ what $C_Z$ is to $Z_{w_{\sigma_A}}$ in~(\ref{eq:limZu}). Then, as $a_n\to 0$, we have for $N$ large enough
\begin{equation}\label{eq:fixN}
f_{N,p}(x)≤\eps x^{-\kappa'}\Eh\big[\big(\ty^A(w_{\sigma_A})\big)^{\kappa'}\1{\sigma_A<\tauh_1}\big]. 
\end{equation}

$\bullet$ Finally, let us consider $g_{N,p}(x)$ for $p$ and $N$ fixed such that~(\ref{eq:fixp}) and~(\ref{eq:fixN}) hold. We have by Markov's inequality
\begin{equation}\label{eq:majgNp}
g_{N,p}(x)≤(\frac{3}{\eps})x^{-\kappa'}\Eh\Big[\big(\sum_{k=\sigma_A+1}^{\tauh_1}\sum_{u\in\widehat{\cal{E}}_{\Omega(w_k)}^p}N Y_u\big)^{\kappa'}\1{\sigma_A<\tauh_1}\1{\sum_{k=\sigma_A+1}^{\tauh_1}\sum_{u\in\widehat{\cal{E}}_{\Omega(w_k)}^p}N Y_u>\frac{\eps}{3} x}\Big].
\end{equation}
Let us study the expectation appearing in this expression; we have as $\kappa'≤1$ and as ${\#\widehat{\cal{E}}_{\Omega(w_k)}≤p}$, 
\begin{align*}
\Eh\Big[\big(\sum_{k=\sigma_A+1}^{\tauh_1}\sum_{u\in\widehat{\cal{E}}_{\Omega(w_k)}^p}N Y\big)^{\kappa'}\1{\sigma_A<\tauh_1}\Big]&≤\Eh\Big[\sum_{k=\sigma_A+1}^{\tauh_1}\sum_{u\in\widehat{\cal{E}}_{\Omega(w_k)}^p}N^{\kappa'} Y_u^{\kappa'}\1{\sigma_A<\tauh_1}\Big]\\
&≤\Eh\big[(\tauh_1-\sigma_A)\1{\sigma_A<\tauh_1}\big]pN^{\kappa'}\E[Y^{\kappa'}\big]\\
&=\Eh\big[\Eh_{\ty({w_{\sigma_A}})}[\tauh_1]\1{\sigma_A<\tauh_1}\big]pN^{\kappa'}\E[Y^{\kappa'}]\\
&≤\Eh\big[C_1\ln(1+\ty({w_{\sigma_A}}))\1{\sigma_A<\tauh_1}\big]pN^{\kappa'}\E[Y^{\kappa'}]\\
&≤C_1\Eh\big[\big(\ty({w_{\sigma_A}})\big)^{\kappa'}\1{\sigma_A<\tauh_1}\big]pN^{\kappa'}\E[Y^{\kappa'}], 
\end{align*}
where the equality is obtained by the strong Markov property and where the last but one inequality is obtained thanks to~(\ref{eq:taulog}). This together with~(\ref{eq:majgNp}) (and the fact that $\ty^A(w_{\sigma_A})=\ty(w_{\sigma_A})-1≥\frac{1}{2}\ty(w_{\sigma_A})$) yields that for any $A$, for $x$ large enough, 
\begin{equation*}
g_{N,p}(x)≤\eps x^{-\kappa'}\Eh\big[\big(\ty^A(w_{\sigma_A})\big)^{\kappa'}\1{\sigma_A<\tauh_1}\big]
\end{equation*}
as required, thus concluding the proof of the lemma. 

\end{proof}

We can now prove that~(\ref{eq:equivPhL1>n}) stands. 
\begin{proposition}\label{prop:xkappa}
There exists a constant $K\in(0;\infty)$ such that
\begin{equation*}
\Ph\big(L^1>x\big)\sim K x^{-\kappa'}. 
\end{equation*}
The forests $\Fb^X$ and $\Fb^R$ satisfy hypothesis ${\bf (H_{\ell})}(iii)$. 
\end{proposition}
\begin{proof}
Let $\eps>0$. Combining~(\ref{eq:encadreL1}) with Lemma~\ref{lemma:L1toW} allows us to write that for $A$ and $x$ large enough, 
\begin{equation*}
\Ph\big(L^1>x\big)≥\Ph\Big(\sum_{k=\sigma_A+1}^{\tauh_1}\sum_{u\in\Omega(w_k)}\ty(u)W^u_\infty> (1+\eps)x,\ \sigma_A<\tauh_1\Big)-\eps x^{-\kappa'}K_A
\end{equation*}
and 
\begin{equation*}
\Ph\big(L^1>x\big)≤\Ph\Big(\sum_{k=\sigma_A+1}^{\tauh_1}\sum_{u\in\Omega(w_k)}\ty(u)W^u_\infty> (1-2\eps)x,\ \sigma_A<\tauh_1\Big)+\eps x^{-\kappa'}\big(2+K_A\big). 
\end{equation*}
Then, Lemmas~\ref{lemma:negspineabove} and~\ref{lemma:final}, yield that for $A$ and $x$ large enough, 
\begin{equation}\label{eq:minfinal}
\Ph\big(L^1>x\big)≥\Ph\Big(\ty^A(w_{\sigma_A})W^{w_{\sigma_A}}_\infty> (1+3\eps)x,\ \sigma_A<\tauh_1\Big)-3\eps x^{-\kappa'}K_A
\end{equation}
and 
\begin{equation}\label{eq:majfinal}
\Ph\big(L^1>x\big)≤\Ph\Big(\ty^A(w_{\sigma_A})W^{w_{\sigma_A}}_\infty> (1-4\eps)x,\ \sigma_A<\tauh_1\Big)+\eps x^{-\kappa'}\big(2+3K_A\big). 
\end{equation}
Now, Lemma~\ref{lemma:tailW} and Lebesgue's dominated convergence theorem yield that for any $A>0$, we have
\begin{align*}
\lim_{x\to\infty} x^{\kappa'} \Ph\Big(\ty^A(w_{\sigma_A})W^{w_{\sigma_A}}_\infty>x,\sigma_A<\tauh_1\Big)&=\lim_{x\to\infty} x^{\kappa'} \Eh\Big[ \Pbh\Big( W^{w_{\sigma_A}}_\infty>\frac{x}{\ty^A(w_{\sigma_A})} \mid \scr{F}_{\sigma_A} \Big)\1{\sigma_A<\tauh_1} \Big]\\
&=\widehat{C}_\infty \Eh\big[\big(\ty^A(w_{\sigma_A})\big)^{\kappa'}\1{\sigma_A<\tauh_1}\big]\\
&=\widehat{C}_\infty K_A\in(0;\infty). 
\end{align*}
Therefore, equations~\eqref{eq:minfinal} and~\eqref{eq:majfinal} yield for $\eps>0$ and $A>0$ large enough,
\begin{align*}
0<((1+3\eps)^{-\kappa}\widehat{C}_\infty-3\eps)K_A&≤\liminf_{x\to\infty} x^{\kappa}\Ph\big(L^1>x\big)\\
&≤\limsup_{x\to\infty} x^{\kappa}\Ph\big(L^1>x\big)≤((1-4\eps)^{-\kappa}\widehat{C}_\infty+3\eps)K_A+2\eps<\infty
\end{align*}
This ensures the non-triviality of the potential limit in $x$ of $x^{\kappa}\Ph\big(L^1>x\big)$. Taking the $\limsup$ (resp.\ $\liminf$) in $A$ on the left member (resp.\ right member) of this inequality, and then letting $\eps\to 0$ ensures that this limit exists and is equal to $\widehat{C}_\infty\lim_{A\to\infty} K_A$.

As explained at the beginning of Subsection~\ref{subsec:cvqueue1}, $\Ph(L^1>x)\sim Kx^{-\kappa'}$ is equivalent to $\P(L^1>x)\sim\frac{\kappa - 1}{\kappa} Kx^{-\kappa}$, and therefore the reproduction laws of forests $\Fb^X$ and $\Fb^R$ satisfy hypothesis ${\bf (H_{\ell})}(iii)$. 
\end{proof}

\begin{remark}
The constant $K^\star$ for which ${\bf(H_\ell)} (iii)$ is satisfied is thus equal to $\frac{\kappa-1}{\kappa}\widehat{C}_\infty \lim_{A\to\infty} K_A$, but we are unable to compute this last limit. 
\end{remark}

\section{Proof of Theorem~\ref{th:main}}\label{sec:final}
Let us now return to the random walk on $\V$ that we initially considered, $(X^\V_n)_{n≥0}$. We intend to prove Theorem~\ref{th:main} by linking this random walk to another one taking place on a forest as introduced at the beginning of Section~\ref{sec:strategy}. The strategy of the proof is the same as that presented in Section~5 of~\cite{aidekon-raphelis}, but we write it thoroughly for the sake of completeness. 

For every $k≥1$, let 
\begin{equation*}
T_k:=\inf\{n≥1\st \sum_{1≤i≤k}\1{X_{i-1}=\parent{\root},X_i=\root}=k\}. 
\end{equation*}
be the time of the $k$\up{th} visit to $\root$ from its parent. In other words, $T_n$ is the time after which $n$ excursions have been made from $\parent{\root}$, and so the law of $\Tb^{T_n}$ the visited tree at time $T_n$ is the same as that of $\Tb$ under $\P_n$. We extend the notation $\beta$ introduced in Subsection~\ref{subsec:reduction} by setting for every $u\in\T$ and every $k≥1$ 
\begin{equation*}
\beta^{(k)}(u):=\sum_{i=1}^{T_k}\1{X_{i-1}=\parent{u},X_i=u}. 
\end{equation*} 
We have the following lemma, which is the analogue of Lemma~7.1 of~\cite{aidekon-raphelis}: 
\begin{lemma}\label{lemma:Rnlog}
 $\P^*$-almost surely, $T_{n^{1-1/\kappa}\fl{\ln^{10}(n)}}≥n$ for $n≥1$ large enough. 
\end{lemma}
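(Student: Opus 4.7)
My plan is a Borel--Cantelli argument under the quenched law $\P^\V$, hinging on a lower-tail bound for the excursion time $\tau_1 := T_1$. The increments $\tau_i := T_i - T_{i-1}$ are i.i.d.\ under $\P^\V$, and $T_{k_n} \geq \max_{1 \leq i \leq k_n}\tau_i$, so
\[
\P^\V(T_{k_n} < n) \;\leq\; \bigl(1 - \P^\V(\tau_1 \geq n)\bigr)^{k_n} \;\leq\; \exp\bigl(-k_n\,\P^\V(\tau_1 \geq n)\bigr).
\]
The plan thus reduces to establishing that $\Pb^*$-a.s.\ there exists a random $c(\V) > 0$ such that, for all $n$ sufficiently large,
\[
\P^\V(\tau_1 \geq n) \;\geq\; c(\V)\,n^{-(1-1/\kappa)}.
\]
Granted this bound, $k_n\,\P^\V(\tau_1 \geq n) \geq c(\V)\ln^{10}(n)$, so $\P^\V(T_{k_n} < n) \leq \exp(-c(\V)\ln^{10}(n))$ is summable in $n$, and Borel--Cantelli yields $T_{k_n} \geq n$ for all $n$ large enough $\P^\V$-a.s., hence $\P^*$-a.s.

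The exponent $1 - 1/\kappa$ is natural in view of Theorem~\ref{th:forest}: the convergence of the rescaled height process to the $\kappa$-stable height process $H$ and of the associated local time at $0$ implies that, for the forest walk, the number of trees visited by time $n$ is of order $n^{1-1/\kappa}$; since the tree sojourn times are i.i.d., their common distribution must have regularly varying tail of index $1 - 1/\kappa$. An alternative derivation combines the main tail estimate $\P_1(L^1 > x) \sim C_0\, x^{-\kappa}$ of Subsection~\ref{subsec:cvqueue1} with the recursive decomposition $\tau_1 = 2\widetilde{B}^1 + \sum_{u \in \mathcal{L}^1}\tau_u$ (with $(\tau_u)_{u \in \mathcal{L}^1}$ i.i.d.\ copies of $\tau_1$) and Proposition~\ref{prop:Winfini}, which together pin down the tail index of $\tau_1$ via a self-similarity argument analogous to that of Subsection~\ref{subsec:cvqueue1}.

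The main obstacle is upgrading this annealed tail estimate to a $\Pb^*$-a.s.\ quenched one with strictly positive random constant $c(\V)$; only the annealed version follows directly from the paper's machinery. This typically requires either a second-moment argument on $\P^\V(\tau_1 \geq n) \cdot n^{1-1/\kappa}$ viewed as a function of $\V \sim \Pb^*$, or a zero-one law on the tail behavior of $\tau_1$ under $\P^\V$, showing that the event $\{c(\V) = 0\}$ has $\Pb^*$-measure zero. Once this quenched control is in hand, the remainder of the argument is the straightforward Borel--Cantelli computation described above.
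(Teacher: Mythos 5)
Your Borel--Cantelli reduction is sound as a high-level skeleton, and you correctly locate the crux: a quenched, $\Pb^*$-a.s.\ lower bound on $\P^\V(\tau_1\geq n)$ (or equivalently on $\P^\V(R^{(1)}\geq m)$, since one excursion visiting $m$ distinct vertices takes at least $m$ steps, which is in fact the reduction the paper uses: $T_k\geq R^{(k)}$). But the proposal stops exactly where the real work begins, and the two strategies you sketch for filling the gap do not work as stated. A second-moment computation on $\P^\V(\tau_1\geq n)\cdot n^{1-1/\kappa}$ would control upward fluctuations or give concentration, not a \emph{lower} bound showing the quenched tail probability is uniformly positive; and a zero-one law would at best show $c(\V)$ is a.s.\ a deterministic constant, without showing that constant is nonzero. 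Neither resolves the obstacle you identify.

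Moreover, the precise form you ask for, $\P^\V(\tau_1\geq n)\geq c(\V)\,n^{-(1-1/\kappa)}$ with a random strictly positive $c(\V)$, is stronger than what the paper actually establishes and is not obviously available. The paper proves the weaker quenched bound $\P^\V(R^{(1)}\geq k^{\kappa/(\kappa-1)})\geq (k\lfloor\ln^5 k\rfloor)^{-1}$ for $k$ large, i.e.\ a polylog loss is allowed, which is precisely why the $\ln^{10}$ margin in $k_n=n^{1-1/\kappa}\lfloor\ln^{10}(n)\rfloor$ is there. To obtain this, the argument is constructive and quite specific: one first builds (using the branching random walk structure and a result quoted from the companion paper) a deterministic-in-$\V$ set $G'_k$ of roughly $k\ln^2k$ vertices at depth $<\ln^2 k$ whose exponential potential is of order $k\ln^2 k$; Golosov's formula then gives that the probability of hitting any one such vertex during a single excursion is $\gtrsim (k\ln^5 k)^{-1}$; and Doney's theorem on the tail of the total progeny of the critical Galton--Watson tree of type-$1$ vertices rooted there gives that, having reached such a vertex, the excursion picks up $\geq k^{\kappa/(\kappa-1)}$ distinct vertices with probability $\gtrsim 1/k$. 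Combining over the many independent vertices of $G'_k$ and applying Markov's inequality to the conditional probability that all such excursions are small yields the summable quenched bound. None of this appears in your proposal; it is the substance of the lemma, not a routine technicality, and the two abstract devices you invoke would not produce it.
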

\begin{proof}
For every $k≥1$, let us set $R^{(k)}:=\#\Tb^{T_k}$ as the number of distinct vertices visited by $(X_n)_{n≥0}$ after $k$ excursions from $\parent{\root}$. Observe that $\{T_{n^{1-1/\kappa}\fl{\ln^{10}(n)}}≥n\}\supset\{R^{(n^{1-1/\kappa}\fl{\ln^{10}(n)})}≥n\}$. Therefore, it is enough to show that $\P^*$-almost surely,
\begin{equation*}
R^{(k\fl{\ln^{10}(k)})}≥k^{\frac{\kappa}{\kappa-1}}\quad\text{for $k$ large enough}. 
\end{equation*}
Now notice that the probability $\P^\V\big(R^{(k\fl{\ln^{10}(k)})}<k^{\frac{\kappa}{\kappa-1}}\big)$ is smaller than $\P^\V\big(R^{(1)}<k^{\frac{\kappa}{\kappa-1}}\big)^{k\fl{\ln^{10}(k)}}$. On the event $\big\{ \P^{\V}\big( R^{(1)}≥k^{\frac{\kappa}{\kappa-1}} \big)≥\frac{1}{k\fl{\ln^5(k)}} \big\}$, we get that $\P^\V\big(R^{(k\fl{\ln^{10}(k)})}<k^{\frac{\kappa}{\kappa-1}}\big)≤\ee{-\fl{\ln^5(k)}}$ which is summable in $k$. Therefore, if we are able to show that $\P^*$-almost surely, for $k$ large enough, $\P^{\V}\big( R^{(1)}≥k^{\frac{\kappa}{\kappa-1}}\big)≥\frac{1}{k\fl{\ln^5(n)}}$, then the lemma will be proved. \\

Let us set $G_k:=\{u\in\T\st \ee{V(u)}≥k\ln^2(k)\quad\text{and}\quad \ee{V(v)}<k\ln^2(k)\quad \forall v<u\}$ as the set of vertices the exponential potential of which is the first of their ancestry line to overshoot $k\ln^2(k)$. According to the proof of Lemma~7.1 of~\cite{aidekon-raphelis}, there exist constants $C,c>0$ such that if we set $G'_k:=\{u\in G_k\st \ee{V(u)}<\frac{2}{C}k\ln^2(k)\}$, then $\P^*$-almost surely, there exists $\eps>0$ such that, 
\begin{itemize}
\item for $k$ large enough, $\#G'_k≥c\eps k \ln^2(k)$, and
\item for $k$ large enough, $|u|<\fl{\ln^2(k)}$ for any $u\in G'_k$. 
\end{itemize} 
For all $\eps>0$, $k_0>0$, let $E_{\eps,k_0}:=\bigcup_{k≥k_0}\big\{ \#G'_k≥c\eps k \ln^2(k) \quad\text{and}\quad |u|<\fl{\ln^2(k)} \quad \forall u\in G'_k \big\}$. Let us now consider for any $u\in\T$, $(X_n^{(u)})_{n≥0}$ as the first excursion from $u$ of the random walk $(X_n)_{n≥0}$. Let $\cal{E}_{u,k}$ be the event that the walk $(X_n^{(u)})_{n≥0}$ hits more than $k^{\frac{\kappa}{1-\kappa}}$ distinct vertices before hitting $\parent{u}$. Notice that the quenched probability $\P^\V\big(R^{(1)}≥k^{\frac{\kappa}{\kappa-1}}\big)$ is larger than the probability that there exists a $u\in G'_k$ such that $u$ gets visited before $T_1$ and $\cal{E}_{u,k}$ happens. Moreover, as remarked by A.O.~Golosov~\cite{golosov}, the probability that the walks visits $u$ before visiting $\parent{\root}$ is equal to $(1+\ee{V(u_1)}+\dots+\ee{V(u)})^{-1}$ and when $u\in G'_k$, $(1+\ee{V(u_1)}+\dots+\ee{V(u)})^{-1}≥\big(2/C\times k\fl{\ln^2(k)}\times(|u|+1)\big)^{-1}≥2\big(\fl{\ln(k)}\times k\fl{\ln^2(k)}\times\fl{\ln^2(k)}\big)^{-1}$ for $k$ large enough. This yields that $\P$-almost surely, for $k$ large enough, 
\begin{equation*}
\P^\V\big(R^{(1)}≥k^{\frac{\kappa}{\kappa-1}}\big)≥\frac{2}{k\fl{\ln^5(k)}}\P^\V\big(\bigcup_{u\in G'_k} \cal{E}_{u,k}\big). 
\end{equation*}
and therefore, for $k$ large enough, 
\begin{equation}\label{eq:events}
\Big\{\P^{\V}\big( R^{(1)}≥k^{\frac{\kappa}{\kappa-1}}\big)<\frac{1}{k\fl{\ln^5(n)}} \Big\}\supset \Big\{ \P^\V\big(\bigcup_{u\in G'_k} \cal{E}_{u,k}\big)<\frac{1}{2} \Big\}=\Big\{ \P^\V\big(\bigcap_{u\in G'_k} \cal{E}^c_{u,k}\big)≥\frac{1}{2} \Big\}.
\end{equation} 
Now for any $\eps>0$, $k_0≤k$, we have by independence of the events $\cal{E}_{u,k}$ for $k\in G'_k$,
\begin{equation*}
\P\big( \bigcap_{u\in G'_k} \cal{E}_{u,k}^{c} \big| E_{\eps,k_0} \big)≤\P\big(R^{(1)}<k^{\frac{\kappa}{\kappa-1}}\big)^{c\eps k\ln^2(k)}. 
\end{equation*}
The quantity $R^{(1)}$ is smaller than $\#\{u\in\T^{(1)}\st \beta^1(u)=1\}$, and since the tree made up of the vertices $u$ of type $\beta^{(1)}(u)=1$ is a critical Galton--Watson tree with power-law tail of index $\kappa≤2$, Theorem~1 of~\cite{doney} ensures that there exists a $c'$ such that $\P\big(R^{(1)}<k^{\frac{\kappa}{\kappa-1}}\big)≤1-\frac{c'}{k}$ if $1<\kappa<2$ and $\P\big(R^{(1)}<k^{\frac{\kappa}{\kappa-1}}\big)≤1-\frac{c'}{k}\ln^{\frac{1}{2}}(k)$ if $\kappa=2$. This yields in any case
\begin{equation*}
\P\big( \bigcap_{u\in G'_k} \cal{E}_{u,k}^{c} \big| E_{\eps,k_0} \big)≤\ee{-c'c\eps \ln^{\frac{3}{2}}(k)}, 
\end{equation*}
and then Markov's inequality gives
\begin{equation*}
\P\Big( \P^\V\big(\bigcap_{u\in G'_k} \cal{E}^c_{u,k}\big)≥\frac{1}{2} \big|E_{\eps,k_0}\Big)≤2\ee{-c'c\eps \ln^{3}{2}(k)}, 
\end{equation*}
which is summable in $k$. This yields that for any $\eps>0$, $k_0≥1$, on the event $E_{\eps,k_0}$, $\P^*$-almost surely for $k≥k_0$ large enough $\P^\V\Big(\bigcap_{u\in G'_k} \cal{E}^c_{u,k}\Big)≥\frac{1}{2}$. In view of~(\ref{eq:events}) and by the fact that $\P^*$-almost surely there exist $\eps>0$, $k_0≥1$ such that $E_{\eps,k_0}$ holds, this ensures that $\P^*$-almost surely for $k$ large enough $\P^{\V}\big( R^{(1)}≥k^{\frac{\kappa}{\kappa-1}}\big)≥\frac{1}{k\fl{\ln^5(n)}}$, which is enough to conclude the proof as explained at the beginning. 
\end{proof}

Let us now prove the analogue of Lemma~5.2 of~\cite{aidekon-raphelis}. Recall that for every $u\in\T$, $\ebrint{\root,u}$ denotes the set of strict ancestors of $u$, excluding $\root$. 
\begin{lemma}\label{lemma:maj2exp}
There exists a constant $c>0$ such that for any $\ell,k≥1$, 
\begin{equation*}
\P\big(\exists u\in\T\st |u|≥\ell\quad\text{and}\quad\beta^{(k)}(v)≥2\quad \forall v\in\ebrint{\root;u}\big)≤2k^2\ee{-c\ell}
\end{equation*}
\end{lemma}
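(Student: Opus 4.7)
The plan is a first-moment argument: bound the probability by the expected number of ancestral chains of length $\ell-1$ whose vertices all have $\beta^{(k)}\geq 2$, and estimate this expectation via the multitype many-to-one lemma combined with the exponential moment bound on $\tauh_1$ provided by~(\ref{eq:majtaualpha}).

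The case $\ell=1$ is trivial: the probability is at most $1\leq 2k^2 e^{-c}$ as soon as $c\leq \ln 2$, so I assume $\ell\geq 2$. If some $u\in\T$ witnesses the event, then its ancestor $u'$ at generation $\ell-1$ lies in the visited tree $\Tb^{T_k}$ (whose law is that of $\Tb$ under $\P_k$), satisfies $\beta(u')\geq 2$, and has $\beta(v)\geq 2$ for every $v\in\ebrint{\root;u'}$. A union bound therefore gives
\[
\P(\text{event}) \;\leq\; \E_k\!\Big[\sum_{|u'|=\ell-1}\1{\beta(u'_j)\geq 2\ \forall\, 1\leq j\leq \ell-1}\Big].
\]

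Next, I apply the multitype many-to-one lemma (Lemma~\ref{lemma:many-to-one-multitype}) with test function $g(x_1,\dots,x_{\ell-1})=x_{\ell-1}^{-1}\1{x_j\geq 2\ \forall\,j}$; inserting the factor $\beta(u')/\beta(u')$ in each summand shows that the above equals $k\,\Eh_k[\phi_{\ell-1}^{-1}\1{\phi_1,\dots,\phi_{\ell-1}\geq 2}]$. The event $\{\phi_1,\dots,\phi_{\ell-1}\geq 2\}$ coincides with $\{\tauh_1\geq \ell\}$ and on it $\phi_{\ell-1}^{-1}\leq 1/2$, so the whole quantity is bounded by $\tfrac{k}{2}\Ph_k(\tauh_1\geq \ell)$.

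The final step is an exponential Chebyshev bound on $\tauh_1$. For any $\alpha\in(0,\kappa-1)$ and $r\in(1,\psi(1+\alpha)^{-1})$, inequality~(\ref{eq:majtaualpha}) together with the fact that $\phi_{\tauh_1}=1$ (so that the term $j=\tauh_1$ in the sum equals $r^{\tauh_1}$) yields $\Eh_i[r^{\tauh_1}]\leq C_\alpha\, i^\alpha$, whence $\Ph_k(\tauh_1\geq\ell)\leq C_\alpha k^\alpha r^{-\ell}$. Combining everything gives $\P(\text{event})\leq\tfrac{C_\alpha}{2}k^{1+\alpha}r^{-\ell}$. Since $\alpha<\kappa-1\leq 1$ and $k\geq 1$ imply $k^{1+\alpha}\leq k^2$, choosing $c\in(0,\ln r)$ small enough that $\tfrac{C_\alpha}{2}e^{-(\ln r-c)}\leq 2$ delivers the claimed bound $2k^2 e^{-c\ell}$. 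The only real difficulty lies in matching the $k^2$ factor: it forces the polynomial dependence in $k$ of $\Eh_k[r^{\tauh_1}]$ to be of order at most $1$, which is exactly what the constraint $\alpha<\kappa-1\leq 1$ in the Lyapunov-type estimate~(\ref{eq:majtaualpha}) provides under the standing assumption $\kappa\leq 2$.
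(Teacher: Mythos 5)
Your proof is correct in its essentials, but takes a genuinely different path than the paper. The paper follows the proof of Lemma~5.2 of~\cite{aidekon-raphelis}: it reduces the event to the existence of a vertex at generation $\ell/2$ visited during two distinct excursions, bounds the quenched probability of reaching such a vertex by the environment via $\P^\V(\beta^{(1)}(v)\geq 1)=(1+\ee{V(v_1)}+\dots+\ee{V(v)})^{-1}\leq\ee{-V(v)}$, and concludes with the geometric decay $\psi\big(\tfrac{1+\kappa}{2}\big)^{\ell/2}$. You instead pass through the biased measure via the multitype many-to-one lemma (Lemma~\ref{lemma:many-to-one-multitype}), identify the truncated event with $\{\tauh_1\geq\ell\}$ under $\Ph_k$, and invoke the Lyapunov-type exponential-moment bound~(\ref{eq:majtaualpha}). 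Your route is arguably more natural in this paper, where the spinal change of measure and the Lyapunov estimate are already available; the paper's route is inherited from~\cite{aidekon-raphelis} and is more self-contained at the level of the walk. The union bound, the extraction of the factor $\tfrac{1}{\phi_{\ell-1}}\leq\tfrac12$, the identification $\{\phi_1,\dots,\phi_{\ell-1}\geq 2\}=\{\tauh_1\geq\ell\}$, the observation that $\phi_{\tauh_1}=1$ turns the $j=\tauh_1$ term of~(\ref{eq:majtaualpha}) into $\Eh_k[r^{\tauh_1}]\leq C_\alpha k^\alpha$, and the exponential Chebyshev bound are all sound.

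The only genuine gap is the final constant-matching. From $\P(\text{event})\leq\tfrac{C_\alpha}{2}k^{1+\alpha}r^{-\ell}\leq\tfrac{C_\alpha}{2}k^2 r^{-\ell}$ you cannot in general ``choose $c\in(0,\ln r)$ small enough that $\tfrac{C_\alpha}{2}\ee{-(\ln r-c)}\leq 2$'': as $c\to 0^+$ the left-hand side tends to $\tfrac{C_\alpha}{2r}$, which exceeds $2$ whenever $C_\alpha>4r$, and the constant $C_\alpha$ from~(\ref{eq:majtaualpha}) is not under your control. The repair is standard but should be said: fix any $c_0\in(0,\ln r)$, observe that $\tfrac{C_\alpha}{2}r^{-\ell}\leq 2\ee{-c_0\ell}$ holds for all $\ell$ larger than some threshold $\ell_0$ (since $r\ee{-c_0}>1$), and for $1\leq\ell\leq\ell_0$ use the trivial bound $\P\leq 1\leq 2k^2\ee{-c\ell}$, which holds once $c\leq\tfrac{\ln 2}{\ell_0}$; taking $c=\min(c_0,\tfrac{\ln 2}{\ell_0})$ finishes.
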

\begin{proof}
We just have to follow the lines of Lemma~5.2, let apart that at the end where we write instead of equation~(5.6):
\begin{align*}
\E\Big[ \sum_{|v|=\ell/2}\1{\beta^{(1)}(v)≥1,\beta^{(2)}(v)-\beta^{(1)}(v)≥1} \Big]&=\E\Big[ \sum_{|v|=\ell/2}\P^\V\big( \beta^{(1)}(v)≥1 \big)^2 \Big]\\
&≤\E\Big[ \sum_{|v|=\ell/2}\P^\V\big( \beta^{(1)}(v)≥1 \big)^{\frac{1+\kappa}{2}} \Big], 
\end{align*}
as $\kappa≤2$. Then as explained in the proof of Lemma~\ref{lemma:Rnlog}, $\P^\V\big( \beta^{(1)}(v)≥1 \big)=(1+\ee{V(v_1)}+\dots+\ee{V(v)})^{-1}≤\ee{-V(v)}$, and therefore the last equation is less than $\psi(\frac{1+\kappa}{2})^{\ell/2}$ (with $\psi(\frac{1+\kappa}{2})<1$), which allows us to conclude as in the proof of Lemma~5.2 of~\cite{aidekon-raphelis}. 
\end{proof}

\noindent We are now ready to give the proof of Theorem~\ref{th:main}. \\
\begin{proof}[Proof of Theorem~\ref{th:main}]
Let us set for every $n≥1$, $c_n:=n^{1-1/\kappa}$ if $\kappa\in (1;2)$ and $c_n:=\big(n\ln^{-1}(n)\big)^{1/2}$ if $\kappa=2$. We need to prove that for any $a>0$, 
\begin{equation*}
c_n^{-1}\Big((|X_{\fl{nt}}|)_{0≤t≤a},\mathcal{R}_n\Big)\sto[\Longrightarrow]{n\to\infty}\Big((H_t)_{0≤t≤a},\Tr_{(H_t)_{0≤t≤1}}\Big).
\end{equation*}
It is actually enough to prove this for $a≥1$; so let us now fix $a≥1$. Let for all $n≥1$, $j_n:=(an)^{1-1/\kappa}\fl{\ln^{10}(an)}$. According to Lemma~\ref{lemma:Rnlog}, $\P^*$-almost surely, $T_{j_n}≥an$ for $n$ large enough. Let
\begin{equation*}
\cal{L}_n:=\big\{u\in\T\st \beta^{(j_n)}(u)=1\quad\text{and}\quad \beta^{(j_n)}(v)≥2\quad\forall v<u\ \text{s.t.}\ |v|≥\frac{1}{2}\ln^2(n) \big\}
\end{equation*}
be the set of vertices which at time $T_{j_n}$ are the first of their ancestry line since generation $\frac{1}{2}\ln^2(n)$ to be of edge-local time $1$. Let us also set
\begin{equation*}
\cal{Z}_n:=\big\{u\in\T\st \beta^{(j_n)}(u)\neq 1\quad\text{and}\quad \beta^{(j_n)}(v)≥2\quad\forall v>u, \big\}
\end{equation*}
as the set of the vertices "below" the line $\cal{L}_n$ (see Figure~\ref{f:Ti} for a representation of $\cal{Z}_n$). According to Lemma~\ref{lemma:maj2exp}, $\P$-almost surely, for $n$ large enough, $\max_{u\in\cal{L}_n\cup\cal{Z}_n}|u|<\ln^2(n)$. Moreover, as for any $u\in\T$, $k≥1$, $\E^\V\big[ \beta^{(k)}(u) \big]=k\ee{-V(u)}$, Markov's inequality yields that
\begin{equation*}
\P^\V\Big( \sum_{|u|≤\ln^2(n)} \beta^{(j_n)}(u)≥j_n\ln^{3}(n) \Big)≤\frac{1}{\ln^3(n)}\sum_{k≤\ln^2(n)}\sum_{|u|=k}\ee{-V(u)}
\end{equation*}
which goes to zero $\P$-almost surely by the Kesten-Stigum theorem~\cite{kesten-stigum}. To sum up, for both the convergence under $\P^*$ and under $\P^\V$ for $\P^*$-almost every $\V$, we just have to show that for any $n_0≥1$, the theorem holds on the event
\begin{equation*}
\cal{E}_{n_0}:=\Big\{\forall n≥n_0,\quad T_{j_n}≥an,\quad \max_{u\in\cal{L}_n\cup\cal{Z}_n} |u|<\ln^{2}(n), \quad \text{and}\quad \sum_{|u|≤\ln^2(n)} \beta^{(j_n)}(u)<j_n\ln^{3}(n) \Big\}. 
\end{equation*}

Let us fix an $n_0$ and restrict to the event $\mathcal{E}_{n_0}$. Let $n≥n_0$. 
Let us denote by $\widetilde\root_1,\widetilde\root_2,\dots$ the vertices of $\cal{L}_n$ ordered according to their first time of visit by the walk $(X_k)_{k≥0}$. For every $1≤i≤\#\cal{L}_n$ we denote by $\widetilde{\T}_i$ the sub-tree of $\T$ rooted in $\widetilde\root_i$. See Figure~\ref{f:Ti} for a picture of the situation. We also let for every $i≥\#\cal{L}_n$, $\widetilde{\V}_i=(\widetilde{\T}_i,V)$ be \iid branching random walks with same law as that of $\V$, and we denote by $\widetilde\W=(\widetilde{\F},V)$ the whole forest made up of the trees $(\widetilde\T_i)_{i≥1}$. Let for all $i≥1$, 
\begin{equation*}
\eta^{\text{in}}_i:=\min\{k≥1\st X_k=\widetilde\root_i\}\qquad\text{and}\qquad\eta^{\text{out}}_i:=\min\{k≥\eta^{\text{in}}_i\st X_k=\parent{\widetilde\root_i}\}
\end{equation*}
be the entrance and exit times of the trees $\widetilde{\T}_i$ by the walk. The walk $(X_k)_{k≥0}$ considered until time $an$ will therefore behave as follows: 
\begin{itemize}
\item It remains in the set $\cal{Z}_n$ from time $0$ to $\eta^{\text{in}}_1-1$, 
\item for any $i≥1$, between times $\eta^{\text{in}}_i$ and $\eta^{\text{out}}_i-1$ it makes an excursion in the tree $\widetilde\T_i$, 
\item for any $i≥1$, between times $\eta^{\text{out}}_i$ and $\eta^{\text{in}}_i-1$ it remains in the set $\cal{Z}_n$. 
\end{itemize}

\begin{figure}[H]
\definecolor{zzttqq}{rgb}{0.6,0.2,0}
\definecolor{qqqqcc}{rgb}{0,0,0.8}
\begin{tikzpicture}[line cap=round,line join=round,>=triangle 45,x=0.25cm,y=1.0cm]
\clip(-19.5,-0.75) rectangle (54,11.25);
\fill[line width=0.4pt,dotted,color=zzttqq,fill=zzttqq,fill opacity=0.1] (-5,5.5) -- (-2,7) -- (-3,7.5) -- (-5.5,7.5) -- (-4.5,9) -- (-4.5,10.5) -- (-13,10.5) -- (-13.5,9) -- (-12.5,8.5) -- (-8.5,6.5) -- (-6,6) -- cycle;
\fill[dotted,color=zzttqq,fill=zzttqq,fill opacity=0.1] (-9,4.5) -- (-8.5,5) -- (-8.5,6) -- (-9.5,6.5) -- (-9.5,6) -- (-11,6) -- (-11.5,7) -- (-13,8.5) -- (-13.5,8.5) -- (-16,10.5) -- (-18,10.5) -- (-18,8) -- cycle;
\fill[dotted,color=zzttqq,fill=zzttqq,fill opacity=0.1] (13,5.5) -- (19,7.5) -- (21,8.5) -- (21.5,9) -- (21.5,10.5) -- (10.5,10.5) -- (10.5,8.5) -- (9.5,8) -- (9.5,7) -- (9.5,6.5) -- cycle;
\fill[dotted,color=zzttqq,fill=zzttqq,fill opacity=0.1] (25,5.5) -- (24.5,6) -- (24.5,7) -- (23.5,8) -- (24.5,9) -- (24.5,10.5) -- (30.5,10.5) -- (30.5,8.5) -- (28.5,7.5) -- (28.5,6.5) -- cycle;
\fill[dotted,color=qqqqcc,fill=qqqqcc,fill opacity=0.1] (-17.5,6) -- (-16.5,7) -- (-16.53,7.43) -- (-13.5,6) -- (-13.5,5) -- (-11.5,4) -- (-8,3.5) -- (-4,4.5) -- (-3,4) -- (-7,3) -- (-1.5,2.5) -- (1.5,3) -- (-3.5,5) -- (-3.5,5.5) -- (-2.5,5.5) -- (0.5,4.5) -- (5.5,3.5) -- (10,2.5) -- (11.5,3.5) -- (9.5,5) -- (9.5,5.5) -- (10.5,5.5) -- (12.5,4.5) -- (12.5,2.5) -- (16.5,4) -- (16.5,4.5) -- (21,5.5) -- (21.5,5.5) -- (21.5,4.5) -- (22.5,4.5) -- (22.5,5.5) -- (24,5.5) -- (26.5,4.5) -- (26,3) -- (21.5,2.5) -- (21.5,1.5) -- (10,-0.5) -- (-8,2.5) -- (-12,3.5) -- (-14,4.5) -- cycle;

\draw (10,0)-- (4,1);
\draw (10,0)-- (16,1);
\draw (4,1)-- (10,2);
\draw (4,1)-- (-1,2);
\draw (16,1)-- (12,2);
\draw (16,1)-- (21,2);
\draw (12,2)-- (17,3);
\draw (10,2)-- (5,3);
\draw [dash pattern=on 3pt off 3pt] (10,2)-- (10,3);
\draw (10,2)-- (12,3);
\draw (21,2)-- (21,3);
\draw [dash pattern=on 3pt off 3pt] (-1,2)-- (-5,3);
\draw (-1,2)-- (3,3);
\draw [dash pattern=on 3pt off 3pt] (-1,2)-- (-1,3);
\draw (-1,2)-- (-8,3);
\draw (5,3)-- (0,4);
\draw [dash pattern=on 3pt off 3pt] (5,3)-- (9,4);
\draw [dash pattern=on 3pt off 3pt] (12,3)-- (16,4);
\draw (12,3)-- (12,4);
\draw (17,3)-- (20,4);
\draw (17,3)-- (17,4);
\draw (-8,3)-- (-12,4);
\draw (-8,3)-- (-4,4);
\draw (0,4)-- (-3,5);
\draw [dash pattern=on 3pt off 3pt] (0,4)-- (3,5);
\draw [dash pattern=on 3pt off 3pt] (9,4)-- (6,5);
\draw (12,4)-- (10,5);
\draw [dash pattern=on 3pt off 3pt] (12,4)-- (15,5);
\draw (17,4)-- (21,5);
\draw (21,3)-- (26,4);
\draw (26,4)-- (23,5);
\draw [dash pattern=on 3pt off 3pt] (26,4)-- (26,5);
\draw (-12,4)-- (-9,5);
\draw (-12,4)-- (-14,5);
\draw (-3,5)-- (-5,6);
\draw [dash pattern=on 3pt off 3pt] (-3,5)-- (-1,6);
\draw [dash pattern=on 3pt off 3pt] (3,5)-- (5,6);
\draw [dash pattern=on 3pt off 3pt] (3,5)-- (1,6);
\draw (10,5)-- (13,6);
\draw [dash pattern=on 3pt off 3pt] (10,5)-- (7,6);
\draw [dash pattern=on 3pt off 3pt] (15,5)-- (17,6);
\draw [dash pattern=on 3pt off 3pt] (21,5)-- (19,6);
\draw (21,5)-- (25,6);
\draw [dash pattern=on 3pt off 3pt] (26,5)-- (29,6);
\draw (-14,5)-- (-17,6);
\draw (-14,5)-- (-14,6);
\draw (-9,5)-- (-12,6);
\draw (-9,5)-- (-9,6);
\draw [dash pattern=on 3pt off 3pt] (1,6)-- (-1,7);
\draw [dash pattern=on 3pt off 3pt] (5,6)-- (7,7);
\draw [dash pattern=on 3pt off 3pt] (5,6)-- (3,7);
\draw (13,6)-- (10,7);
\draw (13,6)-- (16,7);
\draw (-5,6)-- (-9,7);
\draw (-5,6)-- (-3,7);
\draw [dash pattern=on 3pt off 3pt] (19,6)-- (23,7);
\draw [dash pattern=on 3pt off 3pt] (25,6)-- (28,7);
\draw (25,6)-- (25,7);
\draw [dash pattern=on 3pt off 3pt] (29,6)-- (32,7);
\draw [dash pattern=on 3pt off 3pt] (29,6)-- (29,7);
\draw (-12,6)-- (-12,7);
\draw (-12,6)-- (-14,7);
\draw (-14,6)-- (-16,7);
\draw [dash pattern=on 3pt off 3pt] (-17,6)-- (-17,7);
\draw [dash pattern=on 3pt off 3pt] (-1,7)-- (-4,8);
\draw [dash pattern=on 3pt off 3pt] (-1,7)-- (3,8);
\draw [dash pattern=on 3pt off 3pt] (7,7)-- (4,8);
\draw [dash pattern=on 3pt off 3pt] (7,7)-- (8,8);
\draw [dash pattern=on 3pt off 3pt] (10,7)-- (10,8);
\draw (10,7)-- (13,8);
\draw (10,7)-- (11,8);
\draw (16,7)-- (14,8);
\draw [dash pattern=on 3pt off 3pt] (16,7)-- (16,8);
\draw (16,7)-- (19,8);
\draw [dash pattern=on 3pt off 3pt] (23,7)-- (21,8);
\draw [dash pattern=on 3pt off 3pt] (23,7)-- (23,8);
\draw (25,7)-- (24,8);
\draw (25,7)-- (28,8);
\draw [dash pattern=on 3pt off 3pt] (29,7)-- (31,8);
\draw [dash pattern=on 3pt off 3pt] (32,7)-- (34,8);
\draw [dash pattern=on 3pt off 3pt] (32,7)-- (32,8);
\draw [dash pattern=on 3pt off 3pt] (-9,7)-- (-11,8);
\draw (-9,7)-- (-6,8);
\draw [dash pattern=on 3pt off 3pt] (-12,7)-- (-13,8);
\draw (-14,7)-- (-17,8);
\draw [dash pattern=on 3pt off 3pt] (-14,7)-- (-14,8);
\draw [dash pattern=on 3pt off 3pt] (-17,7)-- (-19,8);
\draw [dash pattern=on 3pt off 3pt] (-19,8)-- (-19,9);
\draw (-17,8)-- (-15,9);
\draw [dash pattern=on 3pt off 3pt] (-11,8)-- (-13,9);
\draw [dash pattern=on 3pt off 3pt] (-11,8)-- (-9,9);
\draw [dash pattern=on 3pt off 3pt] (-4,8)-- (-4,9);
\draw (-6,8)-- (-8,9);
\draw [dash pattern=on 3pt off 3pt] (-6,8)-- (-5,9);
\draw [dash pattern=on 3pt off 3pt] (3,8)-- (1,9);
\draw [dash pattern=on 3pt off 3pt] (3,8)-- (3,9);
\draw [dash pattern=on 3pt off 3pt] (4,8)-- (7,9);
\draw [dash pattern=on 3pt off 3pt] (8,8)-- (10,9);
\draw [dash pattern=on 3pt off 3pt] (8,8)-- (8,9);
\draw (11,8)-- (11,9);
\draw (11,8)-- (13,9);
\draw [dash pattern=on 3pt off 3pt] (16,8)-- (15,9);
\draw [dash pattern=on 3pt off 3pt] (16,8)-- (17,9);
\draw (19,8)-- (18,9);
\draw (19,8)-- (21,9);
\draw [dash pattern=on 3pt off 3pt] (23,8)-- (22,9);
\draw [dash pattern=on 3pt off 3pt] (24,8)-- (25,9);
\draw [dash pattern=on 3pt off 3pt] (23,8)-- (22,9);
\draw [dash pattern=on 3pt off 3pt] (23,8)-- (23,9);
\draw [dash pattern=on 3pt off 3pt] (23,8)-- (23,9);
\draw [dash pattern=on 3pt off 3pt] (23,8)-- (24,9);
\draw (28,8)-- (26,9);
\draw [dash pattern=on 3pt off 3pt] (28,8)-- (30,9);
\draw [dash pattern=on 3pt off 3pt] (31,8)-- (31,9);
\draw [dash pattern=on 3pt off 3pt] (32,8)-- (34,9);
\draw [dash pattern=on 3pt off 3pt] (32,8)-- (32,9);
\draw [dash pattern=on 3pt off 3pt] (34,8)-- (36,9);
\draw [line width=0.4pt,dotted,color=zzttqq] (-5,5.5)-- (-2,7);
\draw [line width=0.4pt,dotted,color=zzttqq] (-2,7)-- (-3,7.5);
\draw [line width=0.4pt,dotted,color=zzttqq] (-3,7.5)-- (-5.5,7.5);
\draw [line width=0.4pt,dotted,color=zzttqq] (-5.5,7.5)-- (-4.5,9);
\draw [line width=0.4pt,dotted,color=zzttqq] (-4.5,9)-- (-4.5,10.5);
\draw [line width=0.4pt,dotted,color=zzttqq] (-4.5,10.5)-- (-13,10.5);
\draw [line width=0.4pt,dotted,color=zzttqq] (-13,10.5)-- (-13.5,9);
\draw [line width=0.4pt,dotted,color=zzttqq] (-13.5,9)-- (-12.5,8.5);
\draw [line width=0.4pt,dotted,color=zzttqq] (-12.5,8.5)-- (-8.5,6.5);
\draw [line width=0.4pt,dotted,color=zzttqq] (-8.5,6.5)-- (-6,6);
\draw [line width=0.4pt,dotted,color=zzttqq] (-6,6)-- (-5,5.5);
\draw [dotted,color=zzttqq] (-9,4.5)-- (-8.5,5);
\draw [dotted,color=zzttqq] (-8.5,5)-- (-8.5,6);
\draw [dotted,color=zzttqq] (-8.5,6)-- (-9.5,6.5);
\draw [dotted,color=zzttqq] (-9.5,6.5)-- (-9.5,6);
\draw [dotted,color=zzttqq] (-9.5,6)-- (-11,6);
\draw [dotted,color=zzttqq] (-11,6)-- (-11.5,7);
\draw [dotted,color=zzttqq] (-11.5,7)-- (-13,8.5);
\draw [dotted,color=zzttqq] (-13,8.5)-- (-13.5,8.5);
\draw [dotted,color=zzttqq] (-13.5,8.5)-- (-16,10.5);
\draw [dotted,color=zzttqq] (-16,10.5)-- (-18,10.5);
\draw [dotted,color=zzttqq] (-18,10.5)-- (-18,8);
\draw [dotted,color=zzttqq] (-18,8)-- (-9,4.5);
\draw [dotted,color=zzttqq] (13,5.5)-- (19,7.5);
\draw [dotted,color=zzttqq] (19,7.5)-- (21,8.5);
\draw [dotted,color=zzttqq] (21,8.5)-- (21.5,9);
\draw [dotted,color=zzttqq] (21.5,9)-- (21.5,10.5);
\draw [dotted,color=zzttqq] (21.5,10.5)-- (10.5,10.5);
\draw [dotted,color=zzttqq] (10.5,10.5)-- (10.5,8.5);
\draw [dotted,color=zzttqq] (10.5,8.5)-- (9.5,8);
\draw [dotted,color=zzttqq] (9.5,8)-- (9.5,7);
\draw [dotted,color=zzttqq] (9.5,7)-- (9.5,6.5);
\draw [dotted,color=zzttqq] (9.5,6.5)-- (13,5.5);
\draw [dotted,color=zzttqq] (25,5.5)-- (24.5,6);
\draw [dotted,color=zzttqq] (24.5,6)-- (24.5,7);
\draw [dotted,color=zzttqq] (24.5,7)-- (23.5,8);
\draw [dotted,color=zzttqq] (23.5,8)-- (24.5,9);
\draw [dotted,color=zzttqq] (24.5,9)-- (24.5,10.5);
\draw [dotted,color=zzttqq] (24.5,10.5)-- (30.5,10.5);
\draw [dotted,color=zzttqq] (30.5,10.5)-- (30.5,8.5);
\draw [dotted,color=zzttqq] (30.5,8.5)-- (28.5,7.5);
\draw [dotted,color=zzttqq] (28.5,7.5)-- (28.5,6.5);
\draw [dotted,color=zzttqq] (28.5,6.5)-- (25,5.5);

\draw (30,-0.25) node[anchor=east] {$\mathbb{T} \begin{cases} \\ \end{cases}$};

\draw (28,0)-- (29,0);
\draw (29.5,0) node[anchor=west] {Visited edges};

\draw [dash pattern=on 3pt off 3pt] (28,-0.5)-- (29,-0.5);
\draw (29.5,-0.5) node[anchor=west] {Unvisited edges};

\draw [color=red] (28.5,0.5) ++(-2.5pt,0 pt) -- ++(2.5pt,2.5pt)--++(2.5pt,-2.5pt)--++(-2.5pt,-2.5pt)--++(-2.5pt,2.5pt);
\draw (29,0.5) node[anchor=west] {Vertices of local time $1$};

\draw [dotted,color=zzttqq] (27,1.75)-- (29,1.75);
\draw [dotted,color=zzttqq] (29,1.75)-- (29,2.25);
\draw [dotted,color=zzttqq] (29,2.25)-- (27,2.25);
\draw [dotted,color=zzttqq] (27,2.25)-- (27,1.75);
\fill[dotted,color=zzttqq,fill=zzttqq,fill opacity=0.1] (27,1.75) -- (29,1.75) -- (29,2.25) -- (27,2.25) -- cycle;
\draw (29,2) node[anchor=west] {Trees $\widetilde{\mathbb{T}}_i$};

\fill[dotted,color=blue,fill=blue,fill opacity=0.1] (27,1) -- (27,1.5) -- (29,1.5) -- (29,1) -- cycle;
\draw (29,1.25) node[anchor=west] {The set $\mathcal{Z}_n$};

\draw [dash pattern=on 6pt off 6pt,color=green] (35.5,3.5)-- (-16,3.5);
\draw [dash pattern=on 6pt off 6pt,color=green] (35.5,7)-- (-16,7);
\draw [<->] (35,3.5)-- (35,7);
\draw (35.5,3.5) node[right] {$\frac{1}{2}\ln^2(n)$};
\draw (35.5,7) node[right] {$\ln^2(n)$};

\draw [color=zzttqq](27,11) node {$\widetilde{\mathbb{T}}_1$};
\draw [color=zzttqq](15,11) node {$\widetilde{\mathbb{T}}_3$};
\draw [color=zzttqq](-9,11) node {$\widetilde{\mathbb{T}}_2$};
\draw [color=zzttqq](-17,11) node {$\widetilde{\mathbb{T}}_4$};
\draw [fill=black] (10,0) circle (0.5pt);
\draw [fill=black] (4,1) circle (0.5pt);
\draw [fill=black] (16,1) circle (0.5pt);
\draw [color=red] (10,2) ++(-2.5pt,0 pt) -- ++(2.5pt,2.5pt)--++(2.5pt,-2.5pt)--++(-2.5pt,-2.5pt)--++(-2.5pt,2.5pt);
\draw [color=red] (-1,2) ++(-2.5pt,0 pt) -- ++(2.5pt,2.5pt)--++(2.5pt,-2.5pt)--++(-2.5pt,-2.5pt)--++(-2.5pt,2.5pt);
\draw [fill=black] (12,2) circle (0.5pt);
\draw [fill=black] (21,2) circle (0.5pt);
\draw [fill=black] (17,3) circle (0.5pt);
\draw [fill=black] (5,3) circle (0.5pt);
\draw [fill=black] (10,3) circle (0.5pt);
\draw [fill=black] (12,3) circle (0.5pt);
\draw [color=red] (21,3) ++(-2.5pt,0 pt) -- ++(2.5pt,2.5pt)--++(2.5pt,-2.5pt)--++(-2.5pt,-2.5pt)--++(-2.5pt,2.5pt);
\draw [fill=black] (-5,3) circle (0.5pt);
\draw [fill=black] (3,3) circle (0.5pt);
\draw [fill=black] (-1,3) circle (0.5pt);
\draw [fill=black] (-8,3) circle (0.5pt);
\draw [fill=black] (0,4) circle (0.5pt);
\draw [fill=black] (9,4) circle (0.5pt);
\draw [fill=black] (16,4) circle (0.5pt);
\draw [fill=black] (12,4) circle (0.5pt);
\draw [fill=black] (20,4) circle (0.5pt);
\draw [fill=black] (17,4) circle (0.5pt);
\draw [fill=black] (-12,4) circle (0.5pt);
\draw [fill=black] (-4,4) circle (0.5pt);
\draw [fill=black] (-3,5) circle (0.5pt);
\draw [fill=black] (3,5) circle (0.5pt);
\draw [fill=black] (6,5) circle (0.5pt);
\draw [fill=black] (10,5) circle (0.5pt);
\draw [fill=black] (15,5) circle (0.5pt);
\draw [fill=black] (21,5) circle (0.5pt);
\draw [fill=black] (26,4) circle (0.5pt);
\draw [fill=black] (23,5) circle (0.5pt);
\draw [fill=black] (26,5) circle (0.5pt);
\draw [color=red] (-9,5) ++(-2.5pt,0 pt) -- ++(2.5pt,2.5pt)--++(2.5pt,-2.5pt)--++(-2.5pt,-2.5pt)--++(-2.5pt,2.5pt);
\draw [color=red] (-9,5) node[right]{$\widetilde{\root}_4$};
\draw [fill=black] (-14,5) circle (0.5pt);
\draw [color=red] (-5,6) ++(-2.5pt,0 pt) -- ++(2.5pt,2.5pt)--++(2.5pt,-2.5pt)--++(-2.5pt,-2.5pt)--++(-2.5pt,2.5pt);
\draw [color=red] (-5,6) node[right]{$\widetilde{\root}_2$};
\draw [fill=black] (-1,6) circle (0.5pt);
\draw [fill=black] (5,6) circle (0.5pt);
\draw [fill=black] (1,6) circle (0.5pt);
\draw [color=red] (13,6) ++(-2.5pt,0 pt) -- ++(2.5pt,2.5pt)--++(2.5pt,-2.5pt)--++(-2.5pt,-2.5pt)--++(-2.5pt,2.5pt);
\draw [color=red] (13,6) node[right]{$\widetilde{\root}_3$};
\draw [fill=black] (7,6) circle (0.5pt);
\draw [fill=black] (17,6) circle (0.5pt);
\draw [fill=black] (19,6) circle (0.5pt);
\draw [color=red] (25,6) ++(-2.5pt,0 pt) -- ++(2.5pt,2.5pt)--++(2.5pt,-2.5pt)--++(-2.5pt,-2.5pt)--++(-2.5pt,2.5pt);
\draw [color=red] (25,6) node[right]{$\widetilde{\root}_1$};
\draw [fill=black] (29,6) circle (0.5pt);
\draw [fill=black] (-17,6) circle (0.5pt);
\draw [fill=black] (-14,6) circle (0.5pt);
\draw [fill=black] (-12,6) circle (0.5pt);
\draw [fill=black] (-9,6) circle (0.5pt);
\draw [fill=black] (-1,7) circle (0.5pt);
\draw [fill=black] (7,7) circle (0.5pt);
\draw [fill=black] (3,7) circle (0.5pt);
\draw [fill=black] (10,7) circle (0.5pt);
\draw [fill=black] (16,7) circle (0.5pt);
\draw [fill=black] (-9,7) circle (0.5pt);
\draw [fill=black] (-3,7) circle (0.5pt);
\draw [fill=black] (23,7) circle (0.5pt);
\draw [fill=black] (28,7) circle (0.5pt);
\draw [fill=black] (25,7) circle (0.5pt);
\draw [fill=black] (32,7) circle (0.5pt);
\draw [fill=black] (29,7) circle (0.5pt);
\draw [fill=black] (-12,7) circle (0.5pt);
\draw [fill=black] (-14,7) circle (0.5pt);
\draw [fill=black] (-16,7) circle (0.5pt);
\draw [fill=black] (-17,7) circle (0.5pt);
\draw [fill=black] (-4,8) circle (0.5pt);
\draw [fill=black] (3,8) circle (0.5pt);
\draw [fill=black] (4,8) circle (0.5pt);
\draw [fill=black] (8,8) circle (0.5pt);
\draw [fill=black] (10,8) circle (0.5pt);
\draw [fill=black] (13,8) circle (0.5pt);
\draw [fill=black] (11,8) circle (0.5pt);
\draw [fill=black] (14,8) circle (0.5pt);
\draw [fill=black] (16,8) circle (0.5pt);
\draw [fill=black] (19,8) circle (0.5pt);
\draw [fill=black] (21,8) circle (0.5pt);
\draw [fill=black] (23,8) circle (0.5pt);
\draw [fill=black] (24,8) circle (0.5pt);
\draw [fill=black] (28,8) circle (0.5pt);
\draw [fill=black] (31,8) circle (0.5pt);
\draw [fill=black] (34,8) circle (0.5pt);
\draw [fill=black] (32,8) circle (0.5pt);
\draw [fill=black] (-11,8) circle (0.5pt);
\draw [fill=black] (-6,8) circle (0.5pt);
\draw [fill=black] (-13,8) circle (0.5pt);
\draw [fill=black] (-17,8) circle (0.5pt);
\draw [fill=black] (-14,8) circle (0.5pt);
\draw [fill=black] (-19,8) circle (0.5pt);
\draw [fill=black] (-19,9) circle (0.5pt);
\draw [fill=black] (-15,9) circle (0.5pt);
\draw [fill=black] (-13,9) circle (0.5pt);
\draw [fill=black] (-9,9) circle (0.5pt);
\draw [fill=black] (-4,9) circle (0.5pt);
\draw [fill=black] (-8,9) circle (0.5pt);
\draw [fill=black] (-5,9) circle (0.5pt);
\draw [fill=black] (1,9) circle (0.5pt);
\draw [fill=black] (3,9) circle (0.5pt);
\draw [fill=black] (7,9) circle (0.5pt);
\draw [fill=black] (10,9) circle (0.5pt);
\draw [fill=black] (8,9) circle (0.5pt);
\draw [fill=black] (11,9) circle (0.5pt);
\draw [fill=black] (13,9) circle (0.5pt);
\draw [fill=black] (15,9) circle (0.5pt);
\draw [color=red] (15,9) ++(-2.5pt,0 pt) -- ++(2.5pt,2.5pt)--++(2.5pt,-2.5pt)--++(-2.5pt,-2.5pt)--++(-2.5pt,2.5pt);
\draw [color=red] (28,8) ++(-2.5pt,0 pt) -- ++(2.5pt,2.5pt)--++(2.5pt,-2.5pt)--++(-2.5pt,-2.5pt)--++(-2.5pt,2.5pt);
\draw [color=red] (-6,8) ++(-2.5pt,0 pt) -- ++(2.5pt,2.5pt)--++(2.5pt,-2.5pt)--++(-2.5pt,-2.5pt)--++(-2.5pt,2.5pt);
\draw [fill=black] (17,9) circle (0.5pt);
\draw [fill=black] (18,9) circle (0.5pt);
\draw [fill=black] (21,9) circle (0.5pt);
\draw [fill=black] (22,9) circle (0.5pt);
\draw [fill=black] (25,9) circle (0.5pt);
\draw [fill=black] (23,9) circle (0.5pt);
\draw [fill=black] (24,9) circle (0.5pt);
\draw [fill=black] (26,9) circle (0.5pt);
\draw [fill=black] (30,9) circle (0.5pt);
\draw [fill=black] (31,9) circle (0.5pt);
\draw [fill=black] (34,9) circle (0.5pt);
\draw [fill=black] (32,9) circle (0.5pt);
\draw [fill=black] (36,9) circle (0.5pt);
\end{tikzpicture} 
\caption{ The tree $\T$ (represented up to generation 9), the set $\mathcal{Z}_n$ and the trees $\widetilde{\T}_i$. }
\label{f:Ti}
\end{figure}

We denote by $(\widetilde X_k)_{0≤k≤\sum_{i=1}^{\#\cal{L}_n}(\eta^{\text{out}}_i-\eta^{\text{in}}_i)}$ the random walk made up of the excursions of the walk on the trees $(\widetilde{\T}_i)_{1≤i≤\#\cal{L}_n}$ (the roots of which are considered to be at height $0$), that we extend for convenience as $(\widetilde X_k)_{k≥0}$ by adding excursions on the trees $(\widetilde\T_i)_{i≥\#\cal{L}_n+1}$. The random walk $(\widetilde X_k)_{k≥0}$ is therefore a random walk on a Galton--Watson forest in random environment, as introduced at the beginning of Section~\ref{sec:strategy}. \\
For all $k≤an$, we set $f_n(k):=\sum_{1≤i≤k}\1{X_i\notin \cal{Z}_n}$ as the time spent by the walk in the forest $\widetilde{\F}$ before time $k$. On the event $\cal{E}_{n_0}$, uniformly in $k≤an$, 
\begin{equation}\label{eq:nntilde}
0≤k-f_n(k)≤n-f_n(n)≤2j_n\ln^{3}(n)=o(n),  
\end{equation} 
Let $\widetilde{\mathcal{R}}_{f_n(n)}$ be the trace of $(\widetilde X_k)_{k≥0}$ up to time $f_n(n)$. Proposition~\ref{prop:forest} together with~(\ref{eq:nntilde}) (which also yields $c_{f_n(n)}\sim c_n$) says
\begin{equation}\label{eq:finaltoprove}
c_{n}^{-1}\Big((|\widetilde{X}_{{\fl{f_n(n t)}}|})_{0≤t≤a},\widetilde{\mathcal{R}}_{f_n(n)}\Big)\sto[\Longrightarrow]{n\to\infty}\Big((H_t)_{0≤t≤a},\Tr_{(H_t)_{0≤t≤1}}\Big)
\end{equation}
under the annealed law $\P^*$. 

We can use the same arguments as those used in Section~5 of~\cite{aidekon-raphelis} to extend~\eqref{eq:finaltoprove} result to the quenched law. The only difference is in equation~(5.9),where we can bound the probability to touch a vertex at generation $\frac{1}{2}\ln^{2}(n)$ by $\max_{|u|=\frac{1}{2}\ln^2(n)}\ee{-V(u)}$. Then, the upper bound of~(5.9) of~\cite{aidekon-raphelis} can be replaced by $nj_n\Eb\big[\max_{|u|=\frac{1}{2}\ln^2(n)}\ee{-V(u)}\big]$. Now we write
\begin{align*}
nj_n\Eb\big[\max_{|u|=\frac{1}{2}\ln^2(n)}\ee{-V(u)}\big]&≤nj_n\Eb\Big[\Big(\sum_{|u|=\frac{1}{2}\ln^2(n)}\ee{-(\frac{1+\kappa}{2})V(u)}\Big)^{\frac{2}{1+\kappa}}\Big]\\
&≤nj_n\Eb\Big[\sum_{|u|=\frac{1}{2}\ln^2(n)}\ee{-(\frac{1+\kappa}{2})V(u)}\Big]^{\frac{2}{1+\kappa}}=nj_n\psi(\frac{1+\kappa}{2})^{\frac{1}{1+\kappa}\ln^2(n)}, 
\end{align*}
which is summable as $\psi(\frac{1+\kappa}{2})\in(0;1)$. \\
Now we just notice that by construction, for any $k≤an$, on the event $\mathcal{E}_{n_0}$,
\begin{equation}\label{eq:finalth1}
\Big||X_k|-|\widetilde X_{f_n(k)}|\Big|≤\max_{u\in\cal{Z}_n\cup\cal{L}_n}|u|≤\ln^2(n). 
\end{equation}
Moreover, Lemma~2.3 of~\cite{evans-pitman-winter} says that if $(R,d)$ and $(R',d')$ are two real trees, and if $\phi:R\mapsto R'$ is a surjective map which sends the root of $R$ to that of $R'$ (we mention that the roots of a forest are considered to be at a null distance from each other for the graph distance), then $d_{GH}(R,R')$ the Gromov-Hausdorff distance between $R$ and $R'$ is smaller than $\frac{1}{2}\sup \{|d(x,y)-d'(\phi(x),\phi(y))|,(x,y)\in R\}$. Using this with the map $\phi_1:\mathcal{R}_n \mapsto \widetilde{\mathcal{R}}_{f_n(n)}$ which is the identity on the trees $\widetilde\T_i$ and which sends all the vertices of $\mathcal{Z}_n$ to $\widetilde{\root}_1$, we get that 
\begin{equation*}
d_{GH}(\mathcal{R}_n,\widetilde{\mathcal{R}}_{f_n(n)})≤\ln^2(n)=o(c_n).
\end{equation*}
This and~(\ref{eq:finalth1}) used together with~(\ref{eq:finaltoprove}) concludes the proof. 
\end{proof}

\newpage
\appendix
\setcounter{equation}{0}
\renewcommand{\theequation}{A.\arabic{equation}}
\section*{Appendix}

\subsection*{Proof of Theorem~\ref{th:bitype}}
We recall that we are in the setting of~\cite{raphelis}: let $(F,\tyl,\ell)$ be a leafed Galton--Watson forest of reproduction law $\zeta$, recall that for every vertex $u\in F$ we let $\nu(u)$ (resp.\ $\nu^1(u)$) be the total number of children (resp.\ number of children of type $1$) of $u$ in $F$, and recall that we denote by $F^1$ the forest $F$ limited to its vertices of type $1$. 
We recall the hypotheses we make on $\zeta$: 
\begin{equation*}
{\bf (H_{\ell})}\begin{cases}\parbox{\textwidth}{
\begin{itemize}
\item[$(i)$] $\E\big[ \sum_{|u|=1,\tyl(u)=1} 1 \big]=\E\big[ \nu^1 \big]=1$,
\item[$(ii)$] There exists $\eps>0$ s.t.\ $\E\big[ \big(\sum_{|u|=1} 1\big)^{1+\eps} \big]=\E\big[ (\nu)^{1+\eps} \big]<\infty$,
\item[$(iii)$] There exists $C_0>0$ s.t.\ $\P\big( \sum_{|u|=1,\tyl(u)=1}1>x \big)=\P(\nu^1>x)\ssim{x\to\infty} C_0 x^{-\kappa}$, 
\item[$(iv)$] There exists $r>1$ s.t.\ $\E\Big[ \sum_{|u|=1} r^{\ell(u)} \Big]<\infty$. 
\end{itemize}}\end{cases}
\end{equation*}
Recall also $m:=\E\big[\sum_{|u|=1}1\big]=\E[\nu]$, $\mu:=\E\big[\sum_{|u|=1,\tyl(u)=1} \ell(u)\big]$, and for each vertex $u\in F$, $h(u):=\sum_{k=1}^{|u|} \ell(u_k)$. We define $H^1$ the height process of $F^1$ and $H^{\ell}$ the weighted depth-first exploration process of $F$ as follows:
\begin{equation*}
\forall n\in\N,\;H^1(n):=|u^1(n)|\;\text{ and }\;H^{\ell}(n):=h(u(n)), 
\end{equation*}
where $u(n)$ (resp.\ $u^1(n)$) stands for the $n$\up{th} vertex of $F$ (resp.\ $F^1$) taken in the lexicographical order. Recall Theorem~\ref{th:bitype}. 
\addtocounter{theorem}{-1}
\begin{theorem}
Let $(F,\tyl,\ell)$ be a leafed Galton--Watson forest with edge lengths, with offspring distribution $\zeta$ satisfying hypothesis $\bf (H_{\ell})$. The following convergence in law holds for the Skorokhod topology on the space $\mathbb{D}(\R_+,\R)$ of càdlàg functions: 
\begin{align*}
&\frac{1}{n^{1-1/\kappa}}\left(\Big(H^{\ell}(\fl{ns})\Big)_{s≥0},(H^{1}(\fl{ns}))_{s≥0}\right) \substack{ \Longrightarrow \\ n\to\infty} \frac{1}{(C_0|\Gamma(1-\kappa)|)^{\frac{1}{\kappa}}}\left(\Big(\mu H_{m^{-1}s}\Big)_{s≥0},(H_s)_{s≥0}\right)&&\text{if $1<\kappa<2$}, \\
&or\\
&\frac{1}{(n\ln^{-1}(n))^{\frac{1}{2}}}\left(\Big(H^{\ell}(\fl{ns})\Big)_{s≥0},(H^{1}(\fl{ns}))_{s≥0}\right) \substack{ \Longrightarrow \\ n\to\infty} \frac{1}{(C_0)^{\frac{1}{2}}}\left(\Big(\mu |B_{m^{-1}s}|\Big)_{s≥0},(|B_s|)_{s≥0}\right)&&\text{if $\kappa=2$}, 
\end{align*}
where $H$ is the continuous-time height process of a spectrally positive Lévy process $Y$ of Laplace transform $\E[\exp(-\lambda Y_t)]=\exp(t\lambda^\kappa)$ for $\lambda,t≥0$, $(B_t)_{t≥0}$ is a standard Brownian motion, and where $C_0$ is the constant introduced in ${\bf (H_{\ell})}(iii)$. 
\end{theorem}

As we said in Subsection~\ref{subsec:cvleafed}, the proof of this theorem will follow that of Theorem~1 in~\cite{raphelis}. First, Subsection~2.1 of~\cite{raphelis} remains valid under our hypotheses: we can consider a measure $\widehat{\P}$ on $(F,\tyl,\ell,(w_k)_{k≥0})$ where $(w_k)_{k≥0}$ is a marked ray of $\T$ (which is not to be considered under $\P$ then) such that the following \textit{many-to-one lemma} stands. 
\begin{lemma}\label{lemma:many-to-one}
The random variables $(\ell(w_k))_{k≥0}$ are \iid under $\Ph$ and such that for all $n\in\N$, if $g:\R^{n+1}\to\R_+$ is a measurable function, \\
\begin{equation*}
\E\Big[\sum_{|u|=n,u\in T^1}g\big(\ell(u_0),\dots,\ell(u_{n-1}),\ell(u)\big)\Big]=\Eh\Big[g\big(\ell(w_0),\dots,\ell(w_{n-1}),\ell(w_n)\big)\Big]. 
\end{equation*}
\end{lemma}
\noindent The next lemma is an adaptation of Lemma~2 of~\cite{raphelis}. 
\begin{lemma}\label{lemma:estimatesGW}
Let $\Gamma^1_n:=u^1(n)_0$ be the index of the tree in $F^1$ to which the n\up{th} vertex of $F^1$ belongs. Then under $\bf (H_{\ell})$, for all $\eps>0$, there exist $M,M'>0$ such that for all sufficiently large $n\in\N$,
\begin{equation*}
\P\Big(\Gamma^1_n>Mn^{1/\kappa}\text{ or }\max_{0≤i≤n} |u(i)|>M'n^{1-1/\kappa} \Big)<\eps. 
\end{equation*}
\end{lemma} 
The proof of this lemma is similar to that of Lemma~2 in~\cite{raphelis}. We give it for the sake of completeness. 
\begin{proof}
According to Corollary~2.5.1 of~\cite{duquesne-le-gall}, 
\begin{equation*}
\P\Big( \frac{\Gamma^1_n}{n^{1/\kappa}}>M \Big)\sto{n\to\infty} \P\Big(L^0_1>M \Big)<\frac{\eps}{3}
\end{equation*}
for $M$ large enough, where $L_1^0$ is the local time at level $0$ at time 1 of a strictly stable spectrally positive Lévy process $Y$. Moreover, 
\begin{equation*}
\P\Big( \frac{\max_{0≤i≤n}{|u(i)|}}{n^{1-1/\kappa}}>M' \Big)\sto{n\to\infty} \P\Big(\max_{0≤s≤1}|H_s|>M' \Big)<\frac{\eps}{3} 
\end{equation*}
for $M'$ large enough, where $H$ is the continuous-time height process of $Y$. The union bound concludes the proof. 
\end{proof}

Let for $i\in\N$, $\varphi(i)$ be the index of $u(i)$ in $\F^1$ if $\tyl(u(i))=1$, or of its parent in $\F^1$ if $\tyl(u(i))=0$; that is
\begin{equation*}
\varphi(i):=\begin{cases}
k\text{, where } u^1(k)=u(i) & \text{ if } \tyl(u(i))=1 \\
k\text{, where } u^1(k)=\parent{u(i)} & \text{ if } \tyl(u(i))=0.
\end{cases}
\end{equation*}
Let us now explain how to extend Proposition~2 of~\cite{raphelis} to the following proposition:
\begin{proposition}\label{prop:vertical}
Let $(F,\tyl,\ell)$ be a leafed Galton--Watson forest with edge lengths satisfying hypothesis $\bf (H_{\ell})$. Then, for all $\eps>0$, 
\begin{equation*}
\P\Big(\max_{1≤ i ≤ n}\Big|H^{\ell}(i)-\mu H^1(\varphi(i))\Big|>\eps n^{1-1/\kappa}\Big)\sto{n\to\infty} 0. 
\end{equation*} 
\end{proposition}
\begin{proof}
Let us follow the lines of the proof of Proposition~2 in~\cite{raphelis}. As explained at the beginning of that proof, it suffices to show that
\begin{equation}\label{eq:vertical}
\P\Big(\max_{1≤ i ≤ n}\Big|h(u^1(i))-\mu H^1(i)\Big|>\eps n^{1-1/\kappa}\Big)\sto{n\to\infty} 0. 
\end{equation}
Now ${\bf(H_{\ell})} {\it(iv)}$ ensures that there exists $c>0$ such that $\Ph(\ell(w_1)>c\ln(n))\sto[=]{n\to\infty}o(\frac{1}{n})$. Similarly to how~(2.4) is proved in~\cite{raphelis}, we get that
\begin{equation*}
\P\Big( \exists i≤ n \st \ell(u^1(i))>c\ln(n) \Big)\;\sto{n\to\infty}\;0. 
\end{equation*}
Letting for every $n≥1$ and every $u\in F^1$, $\ell^{(n)}(u):=\ell(u)\1{\ell(u)<c\ln(n)}$, showing~(\ref{eq:vertical}) boils down to showing that 
\begin{equation*}
\P\Big(\exists\, i≤ n,\; \Big|\sum_{u\vdash u^1(i)} \ell^{(n)}(u) -\mu H^1(i) \Big|>\eps n^{1-1/\kappa}\Big)\sto{n\to\infty} 0.
\end{equation*}
Now Lemma~\ref{lemma:estimatesGW} allows us to get an equivalent of~(2.8) of~\cite{raphelis}, and using the same arguments that follow it, we get that it suffices to show that
\begin{equation}\label{eq:sumiid}
\Ph\Big( \Big|\sum_{i=1}^{k}\Big( \ell^{(n)}(w_i)- \Eh\Big[ \ell^{(n)}(w_1) \Big]\Big)\Big|>\frac{\eps}{2} n^{1-1/\kappa} \Big)=o(\frac{1}{n}) 
\end{equation}
uniformly in $k≤M'n^{1-1/\kappa}$. The many-to-one lemma (Lemma~\ref{lemma:many-to-one}) allows us to re-write ${\bf(H_{\ell})} (iv)$ as $\Eh[r^{\ell(w_1)}]<\infty$. The $\ell^{(n)}(w_i)- \Eh\Big[ \ell^{(n)}(w_1) \Big]$ being \iid centred random variables with some finite exponential moments, the Cramer-Chernoff theorem yields~(\ref{eq:sumiid}), which concludes the proof. 
\end{proof}
Let us now prove the equivalent of Proposition~3 of~\cite{raphelis} in our case. 

\begin{proposition}\label{prop:horizontal}
Recall that $m=\E[\nu]$. Under $\bf (H_{\ell})$, the function $(\varphi(\fl{ns})/n)_{s>0}$ converges in probability to $(m^{-1}s)_{s>0}$ as $n$ tends to infinity, for the topology of uniform convergence over compact sets. 
\end{proposition}
\begin{proof}
Let us follow the lines of the proof of Proposition~3 of ~\cite{raphelis}. As explained, we just have to prove that $R_n:=\sum_{k=0}^{n-1}\#\{ u\in\F \st \parent{u}=u^1(k), u^1(n)\prec u \}$ is such that $\P(R(n)>\eps n)\sto{n\to\infty}0$ for any $\eps>0$. Let $c_n:=n^{1/\kappa}\ln(n)$. Hypothesis ${\bf(H_{\ell})} (iii)$ implies that $P(\nu^1>c_n)=o(\frac{1}{n})$. Combining this with Lemma~\ref{lemma:estimatesGW} and Markov's inequality yields 
\begin{align*}
\P\Big( R(n)>\eps n \Big)≤\eps' + \frac{1}{\eps n}\underbrace{\E\Big[ \Big(\sum_{u\vdash u^1(n)}\nu(u)\Big)\1{\max_{i<n}\nu^1(u^1(i))<c_n,\:|u^1(n)|≤ M\fl{n^{1-1/\kappa}}} \Big]}_{=:A_n}, 
\end{align*}
and it is sufficient to show that $A_n$ is $o(n)$ to conclude the proof. Let us introduce the \textit{Lukasiewicz path} of $F^1$, defined by $S_0:=0$ and for all $k≥1$, $S_k:=\sum_{i=0}^{k-1} (\nu^1(u^1(i))-1)$. Let us also set 
\begin{equation*}
\tau_1:=\inf \{k≥1 \st S_k>0 \}\text{ and }\forall i\in\N,\: \tau_{i+1}=\inf \{k>\tau_i \st S_k>\max_{\ell<k} S_\ell \} 
\end{equation*}
as the stopping times at which record high are achieved. Following the lines of the proof of Proposition~3 of ~\cite{raphelis} we get that
\begin{align*}
A_n=\E\Big[ \Big(\sum_{k≥1} \nu(u^1(\tau_k))\1{\tau_k≤n}\Big)\1{\max_{i≤n}\nu^1(u^1(i))<c_n,\tau_{\cl{Mn^{1-1/\kappa}}}≥n} \Big]. 
\end{align*}
Applying the strong Markov property to stopping times $\tau_1,\dots,\tau_{\fl{Mn^{1-1\kappa}}}$, we obtain
\begin{equation*}
A_n ≤Mn^{1-1/\kappa}\E\Big[ \nu(u^1(\tau_1))\1{\nu^1(u^1(\tau_1))<c_n}\Big]. 
\end{equation*}
Using Jensen's inequality and then the same lines as in~\cite{raphelis} for $\E\Big[ \nu(u^1(\tau_1))\1{\nu^1(u^1(\tau_1))<c_n}\Big]$, we get for $\eps>0$ small enough
\begin{align*}
\E\Big[ \nu(u^1(\tau_1))\1{\nu^1(u^1(\tau_1))<c_n}\Big]&≤(\E\Big[ (\nu(u^1(\tau_1)))^{1+\eps}\1{\nu^1(u^1(\tau_1))<c_n}\Big])^{\frac{1}{1+\eps}} \\
&≤ m_\eps^{\frac{1}{1+\eps}}(\E\Big[ \sum_{k=0}^{\tau_1-1} \1{S_k>-c_n+1} \Big])^{\frac{1}{1+\eps}}, 
\end{align*}
where $m_\eps:=\E[(\nu)^{1+\eps}]<\infty$ according to ${\bf(H_{\ell})} (iii)$. Now the renewal theorem (p.360 of~\cite{feller}) ensures that the expectation in the last line is smaller than $c'c_n$ (where $c'$ is an adequate constant) and this yields
\begin{equation*}
A_n≤(Mn^{1-1/\kappa})m_\eps^{\frac{1}{1+\eps}}(c'c_n)^{\frac{1}{1+\eps}}\sto[=]{n\to\infty} o(n), 
\end{equation*}
as required, thus concluding the proof. 
\end{proof}

\begin{proof}[Proof of Theorem~\ref{th:bitype}.] The proof of the convergence of $n^{-(1-1/\kappa)}(H^\ell(\fl{ns}))$ is now similar to that in Section~2.4 of~\cite{raphelis}, let apart that we use Theorems 2.3.1 and 2.3.2 of~\cite{duquesne-le-gall} and Theorem~3 of Section XVII.5 of~\cite{feller}  in the case of the infinite variance: 
\begin{align*}
\Big(\frac{1}{n^{1-\frac{1}{\kappa}}}H^1(\fl{ns}) \Big)_{s≥0}\sto[\Longrightarrow]{n\to\infty}\left(\frac{1}{(C_0|\Gamma(1-\kappa)|)^{\frac{1}{\kappa}}} H_s\right)_{s≥0}\qquad&\text{if $1<\kappa<2$} \\
\Big(\frac{1}{(n\ln^{-1}(n))^{\frac{1}{2}}}H^1(\fl{ns}) \Big)_{s≥0}\sto[\Longrightarrow]{n\to\infty}\left(\frac{1}{(2C_0)^{\frac{1}{2}}} H_s\right)_{s≥0}\qquad&\text{if $\kappa=2$} \\
\end{align*}
where $H$ is the continuous-time height process of a spectrally positive Lévy process of Laplace transform $\exp(t\lambda^\kappa)$, and where $C_0$ is the constant introduced in ${\bf (H_{\ell})}(iii)$. If $\kappa=2$, then $H=\sqrt{2}|B|$ where $B$ is a standard Brownian motion. We refer to~\cite{janson} for an explicit computation of the constants. 
\end{proof}

\subsection*{Technical lemmas}

This subsection contains some technical lemmas which are useful in several proofs. 
\begin{lemma}\label{lemma:applyapunov}
Recall from~\eqref{eq:pij} that $\ph_{i,j}=\binom{i+j-1}{i}\Eb\Big[ \sum_{|u|=1} \frac{\ee{-jV(u)}}{(1+\ee{-V(u)})^{i+j}} \Big]$ are the transition probabilities of the Markov chain $(\beta(w_k))_{k≥0}$. Let $\alpha\in(0;\kappa-1)$, and let for all $i≥1$, $F(i):=\frac{\Gamma(i+1+\alpha)}{\Gamma(i+1)}$. There exists $d\in(0;1)$ such that for any $i$ large enough, 
\begin{equation*}
\sum_{j≥1}\widehat{p}_{i,j}F(j)≤dF(i), 
\end{equation*}
i.e. $F$ is a \textit{Lyapounov function} for the Markov chain $(\beta(w_k))_{k≥0}$. 
\end{lemma}
\begin{proof}
We have, for any $i≥1$, 
\begin{align}\label{eq:sumpijvj}
\sum_{j≥1} \ph_{i,j} F(j)&=\sum_{j≥1} \binom{i+j-1}{i}\E\Big[\sum_{|u|=1} \frac{\ee{-jV(u)}}{(1+\ee{-V(u)})^{i+j}} \Big]F(j) \nonumber\\
&=\E\Big[\sum_{|u|=1} \frac{1}{(1+\ee{-V(u)})^{i+1}}\times\ee{-V(u)}\times \frac{1}{i!}\sum_{j≥0}\frac{\Gamma(j+1+\alpha)}{\Gamma(j+1)}\frac{(j+i)!}{j!} \Big(\frac{\ee{-V(u)}}{1+\ee{-V(u)}}\Big)^{j} \Big]. 
\end{align}
Now for all $x\in(0;1)$, setting $y=1-x$, we have
\begin{align}\label{eq:sumGamma}
\sum_{j≥0} \frac{\Gamma(j+1+\alpha)}{\Gamma(j+1)}&\frac{(j+i)!}{j!} x^j= \frac{\d^i}{\d x^i}\Big(x^i\frac{\Gamma(1+\alpha)}{(1-x)^{1+\alpha}}\Big)\nonumber\\
&=\Gamma(1+\alpha) (-1)^i \frac{\d^i}{\d y^i}\Big( \frac{(1-y)^i}{y^{1+\alpha}} \Big)\nonumber\\
&=\Gamma(1+\alpha) (-1)^i \frac{\d^i}{\d y^i}\Big( \sum_{k=0}^{i} \binom{i}{k}(-1)^k y^{k-1-\alpha} \Big)\nonumber\\
&=\Gamma(1+\alpha) \sum_{k=0}^{i} \binom{i}{k}(-1)^i (k-1-\alpha)\times\dots\times (k-i-\alpha)(-1)^k y^{k-i-1-\alpha} \nonumber\\
&=\frac{\Gamma(1+\alpha)}{y^{i+1+\alpha}} \sum_{k=0}^{i}\binom{i}{k} (-1)^i(k-1-\alpha)\times\dots\times (k-i-\alpha)(-1)^k y^k \nonumber\\
&=\frac{i!}{y^{i+1+\alpha}} \sum_{k=0}^{i} \Gamma(1+\alpha)\frac{(i-k+\alpha)\times\dots\times(\alpha+1)}{(i-k)!}\times\frac{\alpha\times\dots\times (\alpha-(k-1))}{k!}(-y)^k. 
\end{align}
Denoting the last sum by $S$, notice that all its terms are negative for $k>1$, and thus we can write
\begin{equation*}
S≤\sum_{k=0}^{\fl{\ln(i)}} \frac{\Gamma(i-k+1+\alpha)}{(i-k)!}\times\frac{\alpha\times\dots\times (\alpha-(k-1))}{k!}(-y)^k. 
\end{equation*}
For $k≤\fl{\ln(i)}$, noticing that $\frac{\Gamma(i-k+1+\alpha)}{(i-k)!}≥\frac{\Gamma(i-\fl{\ln(i)}+1+\alpha)}{(i-\fl{\ln(i)})!}\sto[=]{i\to\infty}\frac{\Gamma(i+1+\alpha)}{i!}+o\left(\frac{\Gamma(i+1+\alpha)}{i!}\right)$, we get uniformly in $y\in(0;1)$,
\begin{equation*}
S\sto[≤]{i\to\infty}\Big(\frac{\Gamma(i+1+\alpha)}{i!}+o(\frac{\Gamma(i+1+\alpha)}{i!})\Big)\sum_{k=\fl{\alpha}+2}^{\fl{\ln(i)}}\frac{\alpha\times\dots\times (\alpha-(k-1))}{k!}(-y)^k. 
\end{equation*}
Plugging this inequality into~(\ref{eq:sumGamma}) yields
\begin{equation*}
\sum_{j≥0} \frac{\Gamma(j+1+\alpha)}{\Gamma(j+1)}\frac{(j+i)!}{j!} x^j\sto[≤]{i\to\infty}\frac{i!}{y^{i+1+\alpha}}\Big(\frac{\Gamma(i+1+\alpha)}{i!}+o(\frac{\Gamma(i+1+\alpha)}{i!})\Big)\Big((1-y)^\alpha-\eps^i(1-y)\Big),
\end{equation*}
where $\eps^i(y)$ is a sequence of positive functions of $y$ simply decreasing to $0$ and smaller than $(1-y)^\alpha$. Plugging this latter inequality into~(\ref{eq:sumpijvj}) yields
\begin{align*}
\sum_{j≥1} \ph_{i,j} F(j)&≤\E\Big[\sum_{|u|=1} \frac{1}{(1+\ee{-V(u)})^{i+1}}\times\ee{-V(u)}\times \Big(\frac{\Gamma(i+1+\alpha)}{i!}+o(\frac{\Gamma(i+1+\alpha)}{i!})\Big)\times(1+\ee{-V(u)})^{i+1+\alpha}\\
&\hspace{0.5\textwidth}\times\Big((\frac{\ee{-V(u)}}{1+\ee{-V(u)}})^\alpha-{\eps^i}(\frac{\ee{-V(u)}}{1+\ee{-V(u)}})\Big) \Big]\\
&\sto[=]{i\to\infty}\E\Big[ \sum_{|u|=1} \ee{-(1+\alpha)V(u)} \Big]\Big(\frac{\Gamma(i+1+\alpha)}{\Gamma(i+1)}+o(\frac{\Gamma(i+1+\alpha)}{\Gamma(i+1)})\Big),
\end{align*}
where between these two lines we used the monotone convergence theorem on $\Big((\frac{\ee{-V(u)}}{1+\ee{-V(u)}})^\alpha-{\eps^i}(\frac{\ee{-V(u)}}{1+\ee{-V(u)}})\Big)$.
Now since $1<1+\alpha<\kappa$, $\E\Big[ \sum_{|u|=1} \ee{-(1+\alpha)V(u)} \Big]<1$ so there exists $d<1$ such that for all $i>i_0$ large enough, 
\begin{equation*}
\sum_{j≥1} \ph_{i,j} F(j)< d F(i), 
\end{equation*}
which is what we wanted. 
\end{proof}

\begin{lemma}\label{lemma:NegBin}
Let $n\in\N$ and $p\in(0;1)$. Let $X^n\sim NB(n,p)$ be a negative binomial random variable of parameter $(n,p)$; we have for any $0<\alpha<1$, 
\begin{equation*}
 \E[(X^n)^{1+\alpha}]≤16n\big(\frac{p}{1-p}+(\frac{p}{1-p})^{1+\alpha}\big)+2n^{1+\alpha} (\frac{p}{1-p})^{1+\alpha}. 
\end{equation*}
\end{lemma}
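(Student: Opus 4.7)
\textit{Proof plan for Lemma~\ref{lemma:NegBin}.} I would begin by representing $X^n$ as a sum $X^n=\sum_{i=1}^n Y_i$ of $n$ i.i.d.\ geometric random variables on $\{0,1,2,\dots\}$ with $\Pb(Y_i=k)=q^k/(1+q)^{k+1}$, where $q:=p/(1-p)$. A direct computation then gives $\E[Y_i]=q$ and $\E[Y_i^2]=q+2q^2$. The plan is to separate the contribution of the mean from the fluctuations, so I would write $X^n=nq+(X^n-nq)$ and apply the elementary inequality $(a+b)^{1+\alpha}\le 2(|a|^{1+\alpha}+|b|^{1+\alpha})$ (valid for $\alpha\in(0,1)$) to obtain
\begin{equation*}
\E[(X^n)^{1+\alpha}]\le 2(nq)^{1+\alpha}+2\,\E\bigl|X^n-nq\bigr|^{1+\alpha}.
\end{equation*}
The first term already produces the $2n^{1+\alpha}q^{1+\alpha}$ piece of the stated bound.

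For the fluctuation term, the natural tool is the von Bahr--Esseen moment inequality for sums of independent centred random variables, which for exponents in $[1,2]$ reads $\E|\sum_i(Y_i-q)|^{1+\alpha}\le 2\sum_i\E|Y_i-q|^{1+\alpha}$. Combined with $|Y_1-q|^{1+\alpha}\le 2(Y_1^{1+\alpha}+q^{1+\alpha})$, this reduces the whole problem to bounding $\E[Y_1^{1+\alpha}]$ by a multiple of $q+q^{1+\alpha}$.

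For this last estimate I would use log-convexity of moments (Lyapunov's inequality), writing $1+\alpha=(1-\alpha)\cdot 1+\alpha\cdot 2$ to get
\begin{equation*}
\E[Y_1^{1+\alpha}]\le (\E Y_1)^{1-\alpha}(\E Y_1^2)^{\alpha}=q^{1-\alpha}(q+2q^2)^{\alpha},
\end{equation*}
and a one-line case-split on $q\le 1$ vs $q>1$ (using $q+2q^2\le 3\max(q,q^2)$) then shows this is at most $3(q+q^{1+\alpha})$. Assembling the three ingredients yields the stated form $2n^{1+\alpha}q^{1+\alpha}+Cn(q+q^{1+\alpha})$ for an absolute constant $C$; loose book-keeping along the route above yields $C=32$, and the announced constant $16$ should be recoverable by replacing the symmetric splitting $(a+b)^{1+\alpha}\le 2(a^{1+\alpha}+b^{1+\alpha})$ with a sharper one (or by a small optimisation in the von Bahr--Esseen step), but since the lemma is only ever used with finite constants downstream this matters little.

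The only conceptual subtlety is the case-split needed to absorb the $q^2$ term coming from $\E[Y_1^2]$ into the target $q+q^{1+\alpha}$: for $q>1$ one has $q^2>q^{1+\alpha}$, but Lyapunov's inequality is exactly what saves us, as it places the extra factor of $q^{-(1-\alpha)}$ in front of $q^2$, yielding $q^{1+\alpha}$ instead of $q^2$. No step presents a real obstacle; the lemma is essentially a packaging of classical moment inequalities for i.i.d.\ sums, specialised to the negative-binomial distribution.
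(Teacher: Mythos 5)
Your proof is correct, and the overall architecture matches the paper's exactly: decompose $X^n$ into its mean $nq$ plus a centred fluctuation, apply the von Bahr--Esseen inequality to reduce the fluctuation moment to a single-variable estimate, and assemble using $(a+b)^{1+\alpha}\le 2(a^{1+\alpha}+b^{1+\alpha})$. Where you diverge is in how the single-variable centred moment $\E\big[|Y_1-q|^{1+\alpha}\big]$ is bounded. The paper handles it directly with a case split on $p\le 1/2$ (equivalently $q\le 1$): when $q\le 1$ it uses the pointwise inequality $|x|^{1+\alpha}\le |x|+x^2$ together with $\operatorname{Var}(Y_1)\le 2q$, and when $q>1$ it applies Jensen in the form $\E\big[|Z|^{1+\alpha}\big]\le\big(\E[Z^2]\big)^{(1+\alpha)/2}$ with $\operatorname{Var}(Y_1)\le 2q^2$, yielding $\E\big[|Y_1-q|^{1+\alpha}\big]\le 4\big(q+q^{1+\alpha}\big)$ and the constant $16$ after von Bahr--Esseen. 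You instead first peel off $q^{1+\alpha}$ via $|Y_1-q|^{1+\alpha}\le 2(Y_1^{1+\alpha}+q^{1+\alpha})$ and then interpolate $\E[Y_1^{1+\alpha}]\le(\E Y_1)^{1-\alpha}(\E Y_1^2)^{\alpha}$ by Lyapunov, followed by essentially the same case split on $q\le 1$. Both routes are sound; yours is a touch more systematic (log-convexity does the interpolation automatically) but pays for the extra triangle-inequality step with a larger constant, as you correctly anticipated. The paper's direct treatment of the centred variable avoids that loss. Since the lemma is only invoked with an unspecified finite constant downstream, either version suffices.
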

\begin{proof}
Recall that $\E[X^n]=n\frac{p}{1-p}$ and ${\bf Var}(X^1)=\frac{p}{(1-p)^2}$. \\
\begin{itemize}
\item If $p≤1/2$, then ${\bf Var}(X^1)=\frac{p}{(1-p)^2}≤2\frac{p}{1-p}$. Therefore, 
\begin{equation*}
\E[|X^1-\frac{p}{1-p}|^{1+\alpha}]≤\E[|X^1-\frac{p}{1-p}|]+\E[(X^1-\frac{p}{1-p})^2]≤4\frac{p}{1-p}. 
\end{equation*}
\item On the other hand, if $p>1/2$, then ${\bf Var}(X^1)=\frac{p}{(1-p)^2}≤2\frac{p^2}{(1-p)^2}$, and then by Jensen's inequality, 
\begin{equation*}
\E[|X^1-\frac{p}{1-p}|^{1+\alpha}]≤\E[(X^1-\frac{p}{1-p})^2]^{\frac{2}{1+\alpha}}≤2(\frac{p}{1-p})^{1+\alpha}. 
\end{equation*}
\end{itemize}
So in general, $\E[|X^1-\frac{p}{1-p}|^{1+\alpha}]≤4(\frac{p}{1-p}+(\frac{p}{1-p})^{1+\alpha})$. Finally, according to B.~von~Bahr and C.-G.~Esseen~\cite{von-bahr-esseen}
\begin{equation*}
 \E[|X^n-\frac{np}{1-p}|^{1+\alpha}]≤2n\E[|X^1-\frac{p}{1-p}|^{1+\alpha}]≤8n(\frac{p}{1-p}+(\frac{p}{1-p})^{1+\alpha}). 
\end{equation*}
The inequality $(x+y)^{1+\alpha}≤2(x^{1+\alpha}+y^{1+\alpha})$ for $x,y≥0$ concludes the proof. 
\end{proof}

\begin{lemma}\label{lemma:stochastic}
Let $p>0$, and let $(X_n)_{n≥1}$ be a sequence of non-negative random variables which converges in mean of order $p$. Then for any $r\in(0;p)$, there exists a random variable $Y$ with a finite moment of order $r$ and a decreasing sequence $(a_n)_{n≥1}$ (with $a_1=1$) such that for any $n≥1$, $a_n Y$ is stochastically greater than $X_k$ for any $k≥n$. 
\end{lemma}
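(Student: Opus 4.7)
The plan is to bound the tail of each $X_k$ uniformly by Markov's inequality and then take $Y$ to be a Pareto-type variable whose survival function dominates a scaled envelope of the $\P(X_k > \cdot)$. The $L^p$ convergence of $(X_n)$ automatically gives a uniform bound $\sup_k \|X_k\|_p < \infty$ (triangle inequality), which is what makes the Markov estimate usable without any $k$-dependence.

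First, I set $b_n := \sup_{k \geq n}\|X_k\|_p$, which is finite and non-increasing. The case $b_1 = 0$ is trivial ($X_k = 0$ a.s., take $Y \equiv 0$), so I assume $b_1 > 0$ and put $a_n := b_n/b_1$, obtaining a non-increasing sequence with $a_1 = 1$. Markov's inequality yields, for every $k \geq n$ and every $s > 0$,
\[
\P(X_k > s) \;\leq\; 1 \wedge \frac{\|X_k\|_p^p}{s^p} \;\leq\; 1 \wedge \frac{b_n^p}{s^p}.
\]

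Second, I define $Y$ to have the Pareto-type survival function $\overline{F}_Y(t) := 1 \wedge (b_1/t)^p$, so that $Y \geq b_1$ almost surely. A direct integration gives
\[
\E[Y^r] \;=\; b_1^r + b_1^p \int_{b_1}^{\infty} r\, t^{r-1-p} \, \d t \;=\; \frac{p}{p-r}\, b_1^r \;<\; \infty
\]
for every $r \in (0;p)$. Stochastic domination is then immediate: for $k \geq n$ and $s > 0$,
\[
\P(a_n Y > s) \;=\; \overline{F}_Y(s/a_n) \;=\; 1 \wedge (a_n b_1/s)^p \;=\; 1 \wedge (b_n/s)^p \;\geq\; \P(X_k > s),
\]
using $a_n b_1 = b_n$ and the Markov bound above; the case $s=0$ is also fine since $Y \geq b_1 > 0$.

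No serious obstacle arises: the whole trick is to absorb the unknown constant $b_1$ simultaneously into the scale of $Y$ and the normalisation $a_n = b_n/b_1$, so that the two factors of $b_1$ cancel in the domination inequality. The constraint $r < p$ is exactly what makes the Pareto integral finite, and the fact that $L^p$-convergence forces $(\|X_k\|_p)$ to be bounded is what lets Markov produce a clean, $k$-uniform envelope.
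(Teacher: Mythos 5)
Your proof is correct and follows essentially the same route as the paper's: Markov's inequality plus a Pareto-tailed dominating variable. The one small streamlining is that you apply Markov directly at the exponent $p$ and give $Y$ a Pareto tail of shape $p$, whereas the paper's proof first picks an auxiliary exponent $s\in(r;p)$, builds $Y$ with shape $s$, and defines $a_n$ through $\max_{i\ge n}\E[(X_i)^s]$; this extra parameter is unnecessary, and your version is a little cleaner while proving the identical statement.
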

\begin{proof}
Let $r\in(0;p)$, and $s\in(r;p)$; let us consider $Y$ a random variable with distribution function defined for all $x>0$ by
\begin{equation*}
\P\big(Y≤x\big)=\Big(1-\frac{\max_{i≥1}\E\big[(X_i)^{s}\big]}{x^s}\Big)^+, 
\end{equation*} 
which is continuously increasing from $0$ to $1$ indeed, and defines a random variable with a finite moment of order $r$ as $s>r$. Let for all $n≥1$,
\begin{equation*}
a_n:=\Big(\frac{\max_{i≥n}\E\big[ (X_i)^s \big]}{\max_{i≥1}\E\big[ (X_i)^s \big]}\Big)^{\frac{1}{s}}
\end{equation*} 
(we do not consider the trivial case $X_n=0$ \as for all $n≥1$). As $(X_n)_{n≥1}$ converges in mean of order $p>s$, this sequence tends to zero as required. Now Markov's inequality together with the fact that distribution functions are positive yield that for any $k≥n$, 
\begin{equation*}
\P\big(X_k>x\big) ≤\Big(\frac{\max_{i≥n}(\E\big[(X_i)^{s}\big])}{x^s}\Big)\wedge 1=\Big(\frac{\max_{i≥1}(\E\big[(X_i)^{s}\big])}{(a_n)^{-s} x^s}\Big)\wedge 1=\P\big( a_nY>x \big), 
\end{equation*}
which proves that $a_n Y$ is stochastically greater than $X_k$ for any $k≥n$ indeed. 
\end{proof}

\subsection*{Notation}

\noindent\textit{The environment (Section~\ref{sec:intro})}
\renewcommand{\labelitemi}{} 
\begin{itemize}
\item $N$ : the point process
\item $\T$ : the genealogical tree
\item $(V(u))_{u\in\T}$ : the branching random walk generated by $N$
\item $\Delta V(u)$ : the increment $V(u)-V(\parent{u})$
\item $\root$ : the root of $\T$
\item $|u|$ : the generation of the vertex $u$
\item $\parent{u}$ : the parent of the vertex $u$
\item $c(u)$ : the set of children of the vertex $u$
\item $\T_u$ : the subtree of $\T$ rooted in $u$
\item $\Omega(u)$ : the set of strict siblings of $u$ (vertices having the same parent, excluding $u$)
\item $\psi(t)$ : Laplace transform of the point process $N$
\item $\kappa$ : characteristic quantity of $\psi$ defined as $\inf\{t>1\st \psi(t)>1\}$
\item $\W$ : Collection of \iid branching random walks $(\T_i,(V(u))_{u\in\T_i})_{i≥1}$
\item $\F$ : The genealogical forest of $\W$
\item $\root_i$ : the root of $\T_i$ the $i$\up{th} tree composing $\F$
\end{itemize}

\noindent\textit{The random walk (Section~\ref{sec:intro})}
\begin{itemize}
\item $(X_n)_{n≥0}$ (also noted $(X_n^\V)_{n≥0}$ or $(X_n^\W)_{n≥0}$) : The random walk on $\V$ or $\W$
\item $p_{u,v}$ : probability transitions of $(X_n)_{n≥0}$, introduced in~\eqref{eq:probatrans}
\item $\P^\V$ : the law of $(X_n)_{n≥0}$ conditionally on $\V$ (\textit{quenched} law)
\item $\P$ : law $\P^\V$ averaged on $\Pb$ (\textit{annealed} law)
\item $\P^*$ : law $\P^\V$ averaged on $\Pb^*$
\item $C^\star$ : the renormalising constant of the random walk $(X_n)_{n≥0}$
\item $\mathcal{T}_g$ : the real tree coded by the càdlàg function $g$
\end{itemize}

\noindent\textit{The trace and the process of local times}
\begin{itemize}
\item $\Tb^n$ (resp.\ $\Fb^n$) : the set of vertices of $\T$ (resp.\ $\F$) visited by $(X_n)_{n≥0}$ before time $n$
\item $\mathcal{R}_n$ : the trace, that is the graph $\Tb^n$ (or $\Fb^n$) seen as a metric space when equipped by the graph distance
\item $R_n$ : $\#\Fb^n$ the range of the walk 
\item $\Fb$ : the set of vertices of $\F$ visited by $(X_n^\W)_{n≥0}$
\item $\Tb$ : generic random variable with same law as the trees composing $\Fb$
\item $\beta(u)$ : the edge local time of $(X_n^\W)_{n≥0}$ in $u$ 
\item $\mathcal{F}_n$ : the sigma-algebra generated by $(u,\beta(u))_{u\in\Tb,|u|≤n}$
\item $\mathcal{B}_u^1$ (resp.\ $\mathcal{B}^1$) : the set of vertices descending from $u\in\Fb$ (resp.\ $\root$) having no ancestor of type $1$ since $u$ (resp.\ $\root$). See Definition~\ref{def:optline}
\item $\mathcal{L}_u^1$ (resp.\ $\mathcal{L}^1$) : the set of vertices descending from $u\in\Fb$ (resp.\ $\root$) being the first of type $1$ in their ancestry line since $u$ (resp.\ $\root$). See Definition~\ref{def:optline}
\item $B^1_u$ and $L^1_u$ (resp.\ $B^1$ and $L^1$) : $\#\mathcal{B}^1_u$ and $\#\mathcal{L}^1_u$ (resp.\ $\#\mathcal{B}^1$ and $\#\mathcal{L}^1$) 
\item $u<\mathcal{L}^1$ : $u\in\mathcal{B}^1$ and $u\notin\mathcal{L}^1$
\item $\bzeta$ : reproduction law of $\Tb$ when seen as a multitype Galton--Watson tree
\item $\P_i$ : law of $\Tb$ as a multitype-Galton--Watson tree with initial type $i$
\item $m_{i,j}$ : coefficient of the mean matrix of $\Tb$ (defined in~\eqref{eq:defmij})
\item $(a_i)_{i≥1}$ (resp.\ $(b_i)_{i≥1}$) : defined in~\eqref{eq:defaibi}, left (resp.\ right) eigenvector of the matrix $(m_{i,j})_{i,j≥1}$
\item $Z_n$ : multitype additive martingale of $(\Tb,\beta)$, defined in~\eqref{eq:defZn}
\item $\Ph_i$ : biased law on $(\Tb,\beta,(w_k)_{k≥0})$ marked tree with spine
\item $\widehat{\bzeta}$ : biased offspring distribution of $(\Tb,\beta,(w_k)_{k≥0})$
\item $(\pi_i)_{i≥1}$ : invariant measure of the Markov chain $(\beta(w_k))_{k≥0}$
\item $\hat{p}_{i,j}$ : transition probabilities of the Markov chain $(\beta(w_k))_{k≥0}$
\item $\tauh_1$ : hitting time of the state $1$ by the Markov chain $(\beta(w_k))_{k≥0}$
\item $K^\star$ : defined in~\eqref{eq:queueL1}
\item $\sigma_A$ : hitting time of the set $\{A,A+1,A+2,\dots\}$ by the Markov chain $(\beta(w_k))_{k≥0}$
\item $\beta^{A}(u)$ : defined in~\eqref{eq:defbetaA}, sum of the local times of the walks launched below $w_{\sigma_A}$
\item $K_A$ : $\Eh\big[\big(\ty^A(w_{\sigma_A})\big)^{\kappa'}\1{\sigma_A<\tauh_1}\big]$, studied in Lemma~\ref{lemma:KA}
\end{itemize}

\noindent\textit{Leafed Galton--Watson forests with edge lengths (Section~\ref{sec:strategy})}
\begin{itemize}
\item $T$ (resp.\ $F$) : the genealogical tree (resp.\ forest)
\item $\zeta$ : offpring ditribution of $(T,e,\ell)$ as a leafed GW tree with edge lengths
\item $e(u)$ : type of the vertex $u$ (can be $0$ or $1$)
\item $\ell(u)$ : length of the edge linking $\parent{u}$ to $u$
\item $\nu$ : law of the count of the total progeny of $\zeta$ (counts both vertices of type $0$ and $1$)
\item $\nu^1$ : law of the count of vertices of type $1$ given by the law $\zeta$
\item $F^1$ : subforest of $F$ only made up of the vertices of type $1$
\item $m$ : $\E[\nu]$
\item $\mu$ : $\E[\sum_{|u|=1,e(u)=1}\ell(u)]$
\item $u(n)$ (resp.\ $u^1(n)$) : $n$\up{th} vertex of $F$ (resp.\ of $F^1$) for the lexicographical order
\item $H^\ell$ : \textit{weighted} height process of $F$
\item $H^1$ : (\textit{non-weighted}) height process of $F^1$
\item $\Fb^R$ : defined in Subsection~\ref{subsec:FR}
\item $\Fb^X$ : defined in Subsection~\ref{subsec:FX}
\item $\Fb^{R^1}$ (resp.\ $\Fb^{X^1}$) : subforest of $\Fb^R$ (resp.\ $\Fb^X$) made up of the vertices of type $1$ (both are equal up to re-ordering, by construction)
\item $\nu^R$ (resp.\ $\nu^X$) : law of the total count of the offspring of a vertex of type $1$ of $\Fb^R$ (resp.\ of $\Fb^X$)
\item $H_X^\ell$ (resp.\ $H_R^\ell$) : weighted height process of $\Fb^R$ (resp. $\Fb^X$)
\item $\mu_R,m_R$ (resp.\ $\mu_X,m_X$) : constants $\mu$ and $m$ as defined above for $F$ associated with the reproduction law of $\Fb^R$ (resp. $\Fb^X$)
\item $C_0$ : introduced in $\bf (H_\ell)$
\end{itemize}

\noindent\textit{Change of measure on the environment}
\begin{itemize}
\item $\Pb$ : The measure on the environment $(V(u),u\in\T)$
\item $\Pb^*$ : The measure $\Pb(\cdot|\#\T=\infty)$
\item $\Pbh$ : The biased measure on the environment with spine $(V(u),u\in\T,(\tilde{w}_k)_{k≥0})$, defined in Subsection~\ref{subsec:lawtb}
\item $\mathcal{G}_k$ : the sigma-algebra generated by the environment up to generation $k$, defined in~\eqref{eq:defGk}
\item $W_k=\sum_{|u|=k}\ee{-V(u)}$ : the additive martingale for the environment
\item $W_\infty$ : the a.s. limit of the positive martingale $(W_k)_{k≥0}$
\item $W_\infty^{u}$ : the a.s. limit of the martingale $(\sum_{v≥u,|v|=n})_{n≥|u|}\ee{-(V(v)-V(u))}$
\item $\widehat{N}$ : the biased point process
\item $(\tilde{w}_k)_{k≥0}$ : the spine of $\T$ built under $\Pbh$ 
\item $\widehat{S}_k$ : introduced in Subsection~~\ref{subsec:multitype}. Has same law as $(V(\tilde{w}_k))_{k≥0}$ under $\Pbh$
\item $\widehat{C}_\infty$ : coefficient of regular variation of the tail of $W_\infty$, defined in Lemma~\ref{lemma:tailW}
\end{itemize}

\noindent\textit{The launched random walks (Subsection~\ref{subsec:lawtb})}
\begin{itemize}
\item $(X_n^{1,\tilde{w}_i})_{n≥0}$ and $(X_n^{2,\tilde{w}_i})_{n≥0}$ : random walks on the marked tree with spine $(\T, (\tilde{w}_k)_{k≥0})$, launched on $w_i$ and die when hitting $w_{i-1}$
\item $\widetilde{\beta}_i^1(u)$ (resp.\ $\widetilde{\beta}_i^2(u)$) : local time of $(X_n^{1,\tilde{w}_i})_{n≥0}$ (resp. $(X_n^{1,\tilde{w}_i})_{n≥0}$) in $u$
\item $\widetilde{\beta}^1(u)$ (resp.\ $\widetilde{\beta}^2(u)$) : sum of the local times of all the walks $(X_n^{1,\tilde{w}_i})_{n≥0}$ (resp.\ $(X_n^{2,\tilde{w}_i})_{n≥0}$) for $i≥0$
\item $\widetilde{\beta}(u)$ : $\widetilde{\beta}^1(u)+\widetilde{\beta}^2(u)-\1{u\in(w_k)_{k≥0}}$. Has actually same law as $\beta$ (see Proposition~\ref{prop:lawbetatilde})
\item $\widetilde{T}$ : set of vertices visited by the collection of walks $(X_n^{1,\tilde{w}_i})_{n≥0}$ and $(X_n^{2,\tilde{w}_i})_{n≥0}$ for $i≥0$. Has actually same law as $\Tb$ under $\Ph$
\end{itemize}

\bigskip

\bigskip

\newpage
\noindent {\bf Acknowledgements:} I thank my advisor Elie Aïdékon for guiding me throughout the development of this article. I thank Xinxin Chen for her advice on the organization of the proofs and for spotting mistakes in an earlier version. I also thank an anonymous referee for his/her numerous precise comments which greatly improved the quality of this paper. 

\bigskip

\bigskip

\end{document}